\newcommand{\real}{\mathbb{R}}
\newcommand{\abs}[1]{\left|#1\right|}
\newcommand{\commentout}[1]{}
\newcommand{\der}[2]{ \frac{\partial #1}{\partial #2} }
\newcommand{\prob}{\mathcal{P}}
\newcommand{\B}{\mathcal{B}}
\newcommand{\C}{\mathcal{C}}
\newcommand{\M}{\mathcal{M}}
\newcommand{\R}{\mathbb{R}}
\newcommand{\N}{\mathbb{N}}
\newcommand{\supp}{\text{\rm supp}}
\def\Xint#1{\mathchoice
{\XXint\displaystyle\textstyle{#1}}%
{\XXint\textstyle\scriptstyle{#1}}%
{\XXint\scriptstyle\scriptscriptstyle{#1}}%
{\XXint\scriptscriptstyle\scriptscriptstyle{#1}}%
\!\int}
\def\XXint#1#2#3{{\setbox0=\hbox{$#1{#2#3}{\int}$ }
\vcenter{\hbox{$#2#3$ }}\kern-.6\wd0}}
\def\dashint{\Xint-}
\newtheorem{lemma}{Lemma}
\newtheorem{corollary}{Corolary}
\newtheorem{definition}{Definition}
\newtheorem{proposition}{Proposition}
\newtheorem{theorem}{Theorem}
\newtheorem{remark}{Remark}
\begin{document}
\title{Dimensionality of Local Minimizers \\ of the Interaction Energy}

\author{D. Balagu\'e $^1$, J. A. Carrillo$^2$,  T. Laurent$^3$, and G. Raoul$^4$ }

\address{$^1$ Departament de
Matem\`a\-ti\-ques, Universitat Aut\`onoma de Barcelona, E-08193
Bellaterra, Spain. E-mail: {\tt dbalague@mat.uab.cat}.}

\address{$^2$ Department of Mathematics, Imperial College London,
South Kensington Campus, London SW7 2AZ, UK. E-mail: {\tt
carrillo@imperial.ac.uk}}

\address{$^3$ Department of Mathematics, University of California -
Riverside, Riverside, CA 92521,  USA. E-mail: {\tt
laurent@math.ucr.edu}.}

\address{$^4$Centre d'Ecologie Fonctionnelle et Evolutive, UMR 5175, CNRS, \\ 1919 Route de Mende, 34293 Montpellier Cedex 5, France. E-mail: {\tt
raoul@cefe.cnrs.fr}.}

\maketitle

\begin{abstract}
In this work we consider local minimizers (in the topology of
transport distances) of the interaction energy associated to
a repulsive-attractive potential.  We show how the dimensionality
of the support of local minimizers is related to the repulsive
strength of the potential at the origin.
\end{abstract}

\section{Introduction}
Given a Borel measurable function $W: \real^N \to
(-\infty,+\infty]$ which is bounded from below, the interaction
energy of the Borel probability measure $\mu$ is given by
\begin{align}
    E_W[\mu]&:=\frac{1}{2}\iint_{\R^N\times\R^N} W(x-y)\,d\mu(x)d\mu(y) \,. \label{defE}
\end{align}
Our main goal will be to analyse the qualitative properties of
local minimizers of the  energy $E_W$ in the set of Borel
probability measures with the topology induced by transport
distances. More specifically, we will show that the Hausdorff
dimension of the support of local minimizers is directly related
to the behavior at the origin of $\Delta W$.

The interaction energy $E_W$ arises in  many contexts. In
physical, biological, and material sciences it is used to model
particles or individuals effects on others via pairwise
interactions. Given $n$ particles located at $X_1, \ldots, X_n \in
\real^N$, their discrete interaction energy is given by
\begin{align}
    E^n_W[X_1, \ldots, X_n ]&:= \frac{1}{2n^2} \sum_{\substack{i,j=1\\j\neq i}}^{n}   W(X_i-X_j). \label{defEdiscrete}
\end{align}
Note that formally for a large number of particles, the discrete
energy \eqref{defEdiscrete} is well approximated by the continuum
energy \eqref{defE} where $d\mu(x)$ is a general distribution of
particles at location $x\in \real^N$. In fact, the continuum
energy \eqref{defE} of the discrete distribution $\frac1n
\sum_{i=1}^n \delta_{X_i}$ reduces to \eqref{defEdiscrete}.

In  models arising in material sciences \cite{Wales1995,
Wales2010,Rechtsman2010, Viral_Capside, soccerball}, particles,
nano-particles, or molecules self-assemble in a way to minimize
energies similar to $E^n_W$. Analogously in applications to
biological sciences \cite{Mogilner2003,Mogilner2,BT1,BT2,
Dorsogna}, individuals in a social aggregate (e.g., swarm, flock,
school, or herd) self-organize in order to minimize similar type
of energies. In these applications the potential $W$ is typically
repulsive in the short range so that particles/individuals do not
collide, and attractive in the long range so that the
particles/individuals gather to form a group or a structure.
Therefore one is often led to consider radially symmetric
interaction potentials of the form $W(x)=w(|x|)$ where $w:
[0,+\infty) \to (-\infty,+\infty]$ is decreasing on some interval
$[0,r_0)$ and increasing on $(r_0,+\infty)$. The function $w$ may
or may not have a singularity at $r=0$.  We will refer to such
potentials as being {\bf repulsive-attractive}. Since $w$ has a
global minimum at $r_0$, it is obvious that if we consider only
two particles $X_1$ and $X_2$, in order to minimize
$E_W^2[X_1,X_2]$, the two particles must be located at a distance
$r_0$ from one another. Whereas the situation is simple with two
particles, it becomes very complicated for large number of
particles. Recent works
\cite{KSUB,BUKB,FHK,soccerball,KHP,HuiUminskyBertozzi,Raoul,
FellnerRaoul1, FellnerRaoul2,BCLR} have shown that such
repulsive-attractive potentials lead to the emergence of
surprisingly rich geometric structures. The goal of the present
paper is to understand how the dimensionality of these structures
depends on the singularity of $\Delta W$ at the origin.

Let us describe the main results. Consider a repulsive-attractive
potential $W(x)=w(|x|)$. Typically the Laplacian of such potential
will be negative in a neighborhood of the origin. We show that if
\begin{equation}\label{paris}
\Delta W(x) \sim -\frac{1}{|x|^{\beta}} \qquad \text{as } x\to 0
\end{equation}
for some $0<\beta < N$, then the support of local minimizers of
$E_W$ has Hausdorff dimension greater or equal to $\beta$. The
precise hypotheses needed on $W$ for this result to be true, as
well as the precise meaning of \eqref{paris}, can be found in the
statement of Theorem \ref{dimension}. The exponent $\beta$
appearing in \eqref{paris} quantifies how repulsive the potential
is at the origin. Therefore our result can be intuitively
understood as follows: the more repulsive the potential is at the
origin, the higher the dimension of local minimizers will be.

Potentials satisfying \eqref{paris} have a singular Laplacian at 0
and we refer to them as {\bf strongly repulsive at the origin}.
The second main result is devoted to potentials which are {\bf
mildly repulsive at the origin}, that is potentials whose
Laplacian does not blow up at the origin. To be more precise we
show that if
\begin{equation}\label{marseille}
W(x) \sim -|x|^{\alpha} \qquad \text{as } x\to 0  \qquad \text{ for  some $\alpha>2$}
\end{equation}
then a local minimizer of the interaction energy cannot be
concentrated on smooth manifolds of any dimension except
0-dimensional sets. The exact hypotheses on $W$, as well as the
precise meaning of \eqref{marseille}, can be found in Theorem
\ref{thm1}. Note that this result suggests that local minimizers
of the interaction energy of mildly repulsive potentials have zero
Hausdorff dimension -- however we are currently unable to prove
this stronger result.

Summarizing, in this paper we show that if the Laplacian of the
potential behaves like $-1/|x|^\beta$ around the origin, with
$0<\beta<N$, then the dimension of minimizers is at least $\beta$
and if the Laplacian does not blow up at the origin, then the
dimension is zero, see the precise statement in Theorems
\ref{dimension} and \ref{thm1}. This is illustrated in the case of
two dimensions ($N=2$) in Table~\ref{figure_intro}, where we show
some local minimizers of $E_W$ with interaction potentials of the
form
\begin{equation} \label{powerlaw}
W(x)=-\frac{|x|^\alpha}\alpha+\frac{|x|^\gamma}\gamma \qquad \alpha<\gamma,
\end{equation}
so that  $W(x)\sim -\frac{|x|^\alpha}\alpha$ and $ \Delta W(x)\sim
-\frac1{|x|^{\beta}}$ with $\beta=2-\alpha$ as $x\to 0$.

\begin{itemize}
\item Subfigure (a): $\alpha=2.5$ and $\gamma=15$. The support of
the minimizer has zero Hausdorff dimension in agreement with
Theorem \ref{thm1}. Actually, in this particular case it is
supported just on 3 points.

\item Subfigure (b) and (c):  we consider two examples where the
potentials have the same behavior at the origin, $\alpha=1.5$, but
different attractive long range behavior ($\gamma=7$ and $2$
respectively). Theorem \ref{dimension} shows that the Hausdorff
dimension of the support must be greater or equal to
$\beta=2-\alpha=0.5$. Indeed, the minimizer for the first example
has a one-dimensional support on three curves whereas the
minimizer for the second example has two-dimensional support.

\item Subfigure (d): $\alpha=0.5$ and $\gamma=5$.  Theorem
\ref{dimension} proves that the Hausdorff dimension of the support
must be greater than $\beta=2-\alpha=1.5$. The numerical
simulation demonstrates that it has dimension two.
\end{itemize}
In our extensive numerical experiments using gradient descent
methods we never observed minimizers with a support that might be
of non-integer Hausdorff dimension.

\begin{table}
\centering
\begin{tabular}{||m{1.7cm}|m{3.5cm}|m{3.5cm}|m{3.5cm}||}
\hline
& $\qquad\mbox{Dim = 0}$ & $\qquad\mbox{Dim = 1}$ & $\qquad\mbox{Dim = 2}$ \\
\hline
& (a)  & & \\
$\alpha=2.5 $
&\begin{center}\includegraphics[height=0.12\textheight]{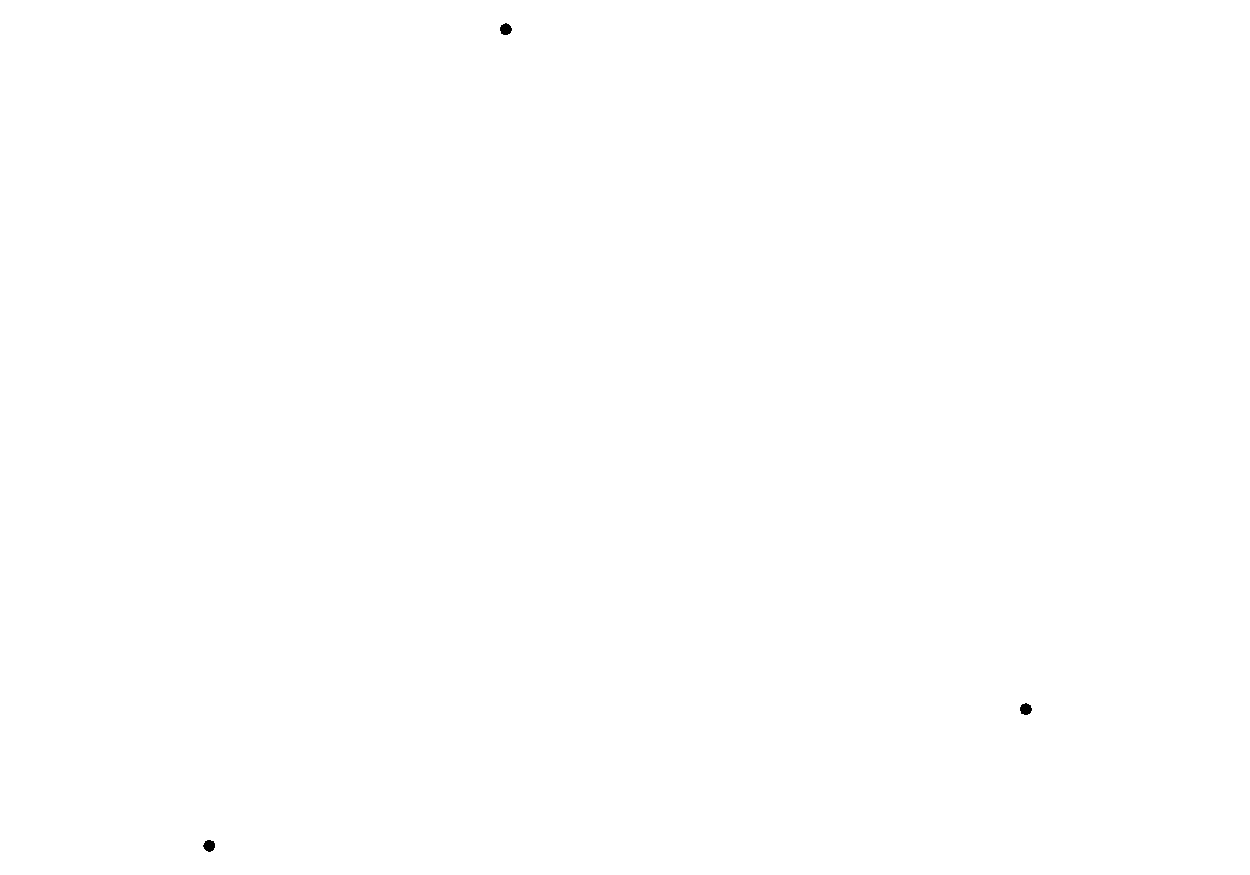}\end{center}
& & \\ \hline
& & (b) & (c) \\
$\alpha=1.5$ &
&\begin{center}\includegraphics[height=0.12\textheight]{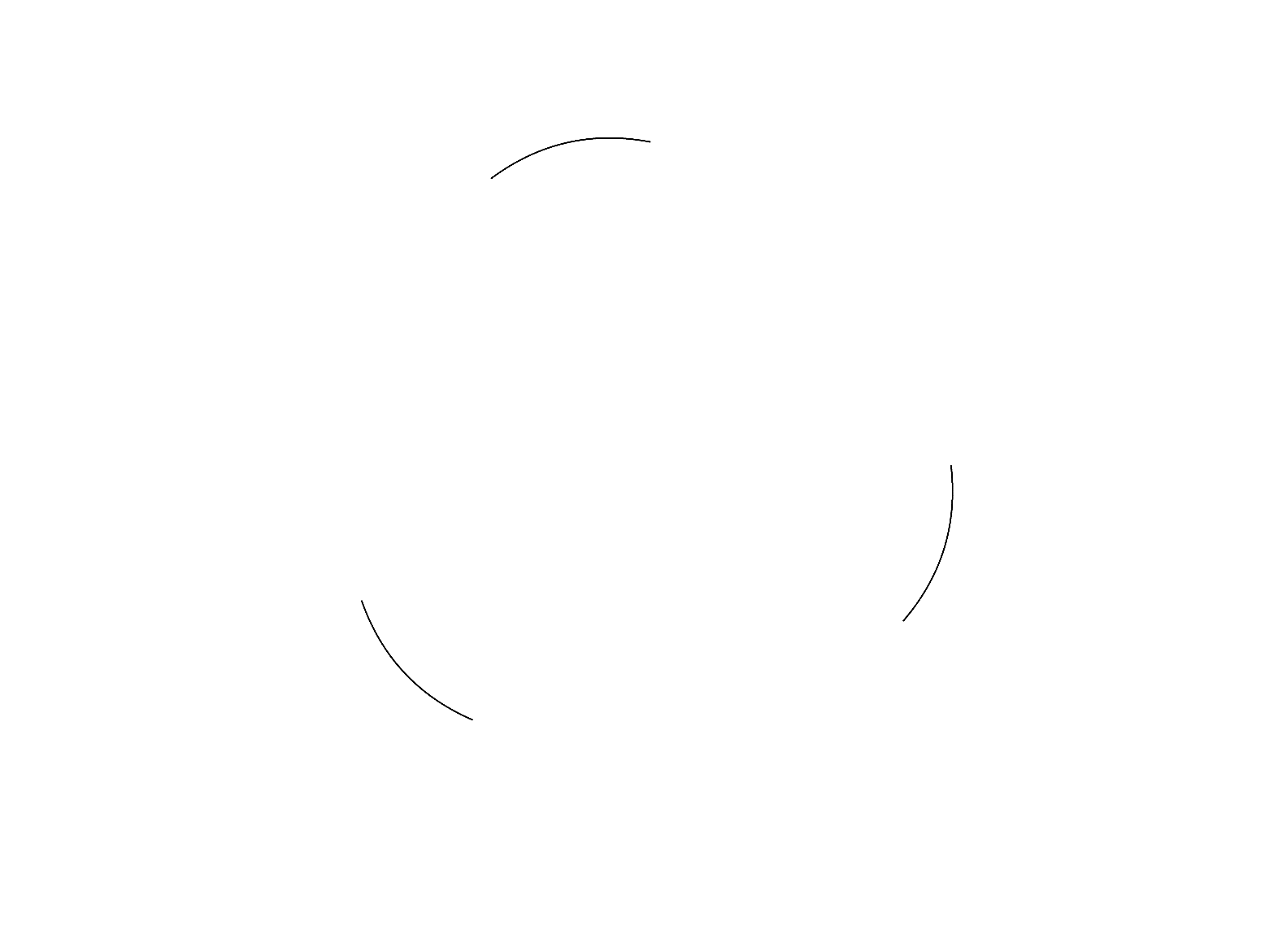}\end{center}
&\begin{center}\includegraphics[height=0.12\textheight]{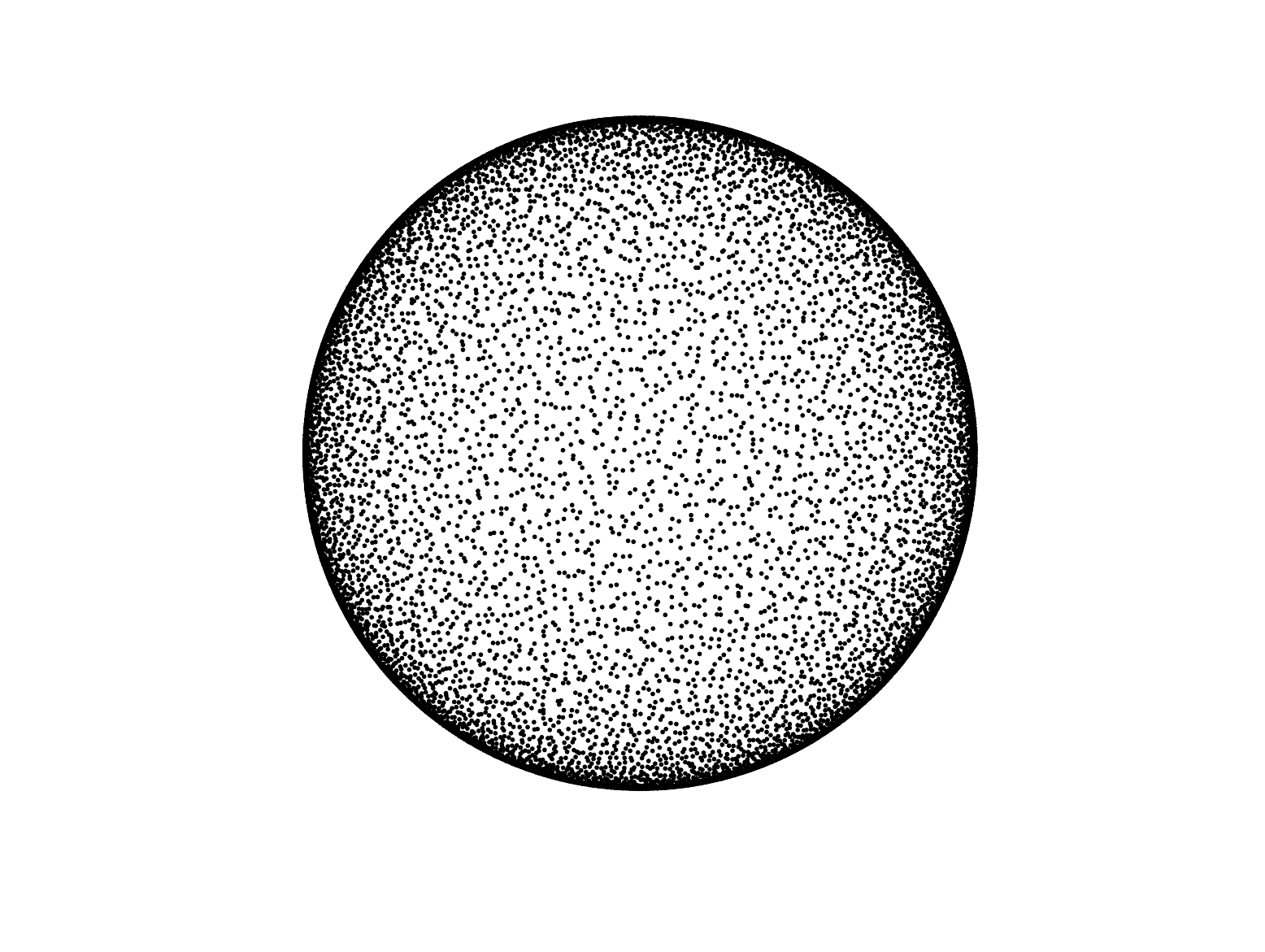}\end{center}
\\ \hline
& & & (d)\\
$\alpha=0.5$ & &
&\begin{center}\includegraphics[height=0.12\textheight]{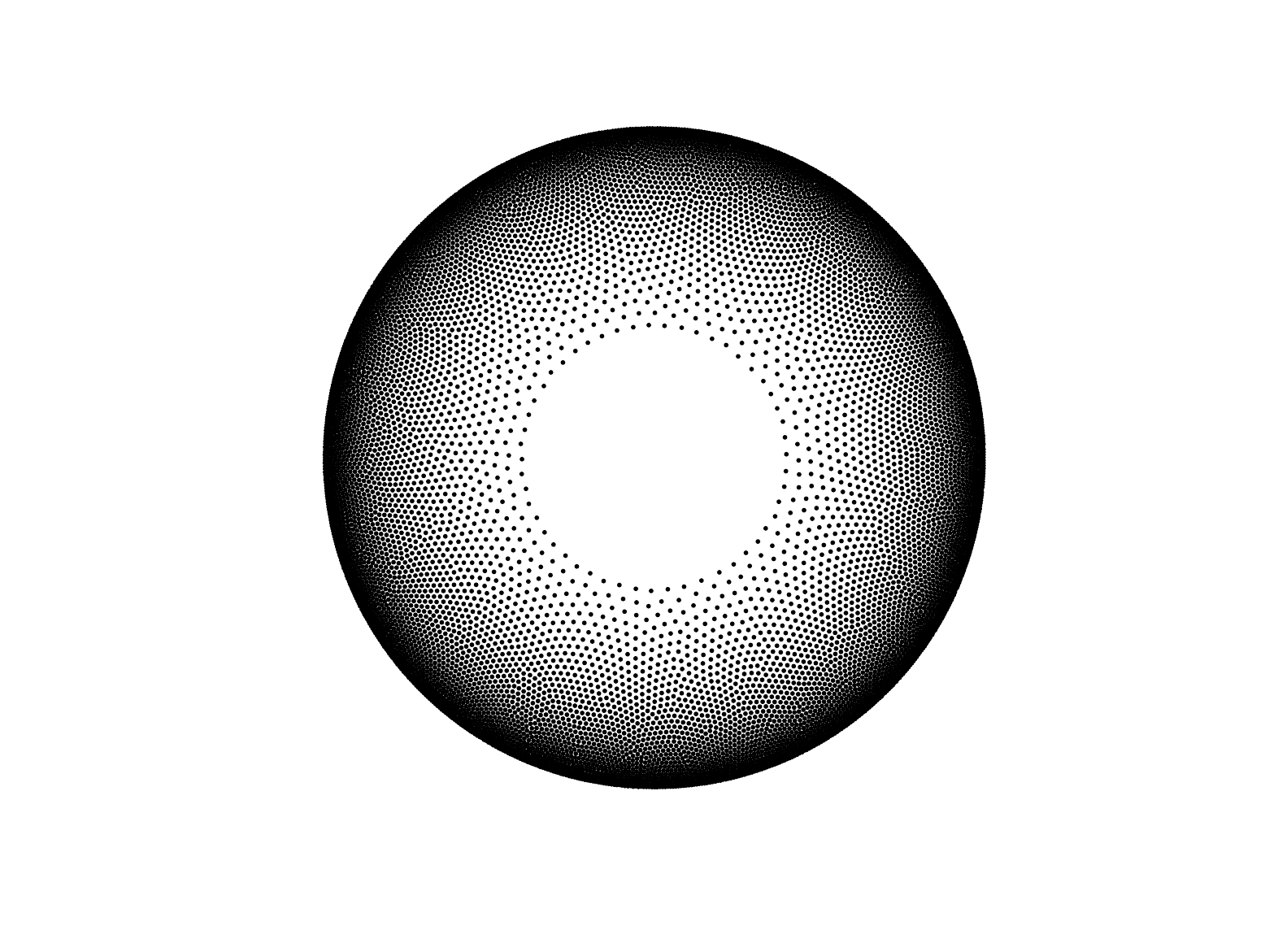}\end{center}
\\ \hline
\end{tabular}
\vspace{3mm} \caption{Local minimizers of the interaction energy
$E^n_W$ for various potentials $W(x)$. In these computations
{$n=10,000$}. When $\Delta W$ does not blow-up at the
origin (Case a) the Hausdorff dimension of the the support of
minimizers is zero. When $\Delta W \sim -1/|x|^{\beta}$ as $x \to
0$, $0<\beta<N$ (Cases b,c,d) the Hausdorff dimension of the the
support of minimizers is greater or equal to $\beta$.}
\label{figure_intro}
\end{table}

In most of this paper, we will consider local minimizers for the
topology induced by the transport distance $d_\infty$ (see section
2 for a definition of $d_\infty$). This topology is indeed the
natural one to consider. In particular, gradient descent numerical
methods based on particles typically lead to local minimizers for
the $d_\infty-$topology. Moreover  the topology induced by
$d_\infty$ is the finest topology among the ones induced by $d_p$,
$1\leq p\leq \infty$ (see section 2 for a definition of $d_p$). As
a consequence local minimizers in the $d_p$-topology are
automatically local minimizers in the $d_\infty$-topology, and
thus they are also covered by our study. In Section 5 we will
discuss in more detail these questions.

Let us finally mention that the gradient flow of the energy $E_W$
in the Wasserstein sense $d_2$
\cite{Carrillo-McCann-Villani03,Ambrosio2008,Carrillo-McCann-Villani06}
has been extensively studied in recent years \cite{L,BL,BCL,
BGL,BLR,CDFLS,CDFLS2,BLL,FellnerRaoul1,FellnerRaoul2,Raoul,BCLR,BCY}.
It leads to the nonlocal interaction equation
\begin{gather}
\der{\mu}{t} + \text{div}(\mu v) =0  \quad , \quad  v= - \nabla W*
\mu   \label{pdes1}
\end{gather}
where $\mu(t,x)=\mu_t(x)$ is the probability or mass density of
particles at time $t$ and at location $x \in \real^N$, and
$v(t,x)$ is the velocity of the particles. Stability properties of
steady states for \eqref{pdes1} with repulsive-attractive
potentials have only been analyzed very recently. In \cite{BCLR}
we gave conditions for radial stability/instability of particular
local minimizers. We should also mention that the one dimensional
case was analyzed in detail in \cite{FellnerRaoul1,FellnerRaoul2}.
Well-posedness theories for these repulsive-attractive potentials
in various functional settings have been provided in
\cite{L,Ambrosio2008,BLR,CDFLS,BLL,BCLR}. Stable steady states of
\eqref{pdes1} under certain set of perturbations are expected to
be local minimizers of the energy functional \eqref{defE} in a
topology to be specified. Actually, this topology should determine
the set of admissible perturbations. As already mentioned, the
$d_\infty$-stability is the one typically studied by performing
equal mass particles simulations.

Finally, we can now interpret our dimensionality result in terms
of the nonlocal evolution equation \eqref{pdes1}. The heuristic
idea behind the implication: \eqref{paris} with $0<\beta<N$
implies dimensionality larger than $\beta$ of the support of local
minimizers of $E_W$; can be understood in terms of the divergence
of the velocity field in \eqref{pdes1}. In fact, it is
straightforward to check that the divergence of the velocity field
generated by a uniform density localized over a smooth manifold of
dimension $k$ is $+\infty$ on the manifold if and only if
$k<\beta$ (this is equivalent to non-integrability of $-\Delta W$
on manifolds of dimension $k$). Heuristically, if $\mbox{div
}v=-\Delta W * \mu$ associated to $\mu$ diverges on its support
the density has a strong tendency to spread, the configuration is
not stable and then $\mu$ is not a local minimizer. Therefore, we
can reinterpret our result in Theorem 1 as follows: local
minimizers of \eqref{defE} have to be supported on manifolds where
the divergence of their generated velocity field is not $+\infty$.

The plan of the paper is as follows. Section 2 will be devoted to
the necessary background in optimal transport theory and
notations. Strongly repulsive potentials are treated in Section 3
while mildly repulsive potentials are analyzed in Section 4. In
Section 5, for the smaller subset of local minimizers in the
$d_2$-topology, we show that we can use an Euler-Lagrange approach
in the spirit of \cite{BT2} to derive some properties of these
minimizers. Extensive numerical tests as well as details of the
algorithm used in order to minimize $E_W^n$ are reported in
Section 6.


\section{Preliminaries in Transport Distances}

We denote by $\B(\real^N)$ the family of Borel subsets of
$\real^N$.  Given a set $A \in \B(\real^N)$, its Lebesgue measure
is denoted by $|A|$. We denote  by $\M(\real^N)$   the set of
(nonnegative) Borel measures on $\real^N$ and by $\prob(\real^N)$
the set of Borel probability measures on $\real^N$. The support of
$\mu \in \M(\real^N)$, denoted by $\supp (\mu)$, is the closed set
defined by
\begin{equation*} \label{supp}
\supp (\mu):= \{x \in \real^N: \mu(B(x,\epsilon))>0  \text{ for
all } \epsilon>0 \}\,.
\end{equation*}
A measure $\rho \in \M(\real^N)$ is said to be a part of $\mu$ if
$\rho(A) \le \mu(A)$ for all $A\in\B(\real^N)$ and it is not
identically zero. This terminology is justified by the fact that
if $\rho$ is a part of $\mu$, then $\mu$ can be written $\mu=
\rho+\nu$ for some $\nu \in \M(\real^N)$ ($\nu=\mu-\rho$ to be
more precise). We will say that a probability measure
$\mu\in\prob(\R^N)$ can be decomposed as a convex combination of
$\mu_0,\mu_1 \in \prob(\R^N)$ if there exists $0\leq m_0,m_1 \leq
1$ with $m_0+m_1=1$ such that $\mu=m_0 \mu_0 + m_1 \mu_1$.

Let us introduce some notation related to the interaction
potential energy. We denote by $B_W:
\prob(\real^N)\times\prob(\real^N) \to (0,+\infty]$ the bilinear
form defined by
\begin{equation}\label{defB}
    B_W[\mu_1,\mu_2]:=\frac{1}{2}\iint_{\real^N\times\R^N} W(x-y)\,d\mu_1(x)d\mu_2(y) .
\end{equation}
Obviously we have that $E_W[\mu]=B_W[\mu,\mu]$. Let us define the
shortcut notation
$T_W[\mu_1,\mu_2]:=E_W[\mu_1]-2B_W[\mu_1,\mu_2]+E_W[\mu_2]$ which
will often occur in several computations. For notational
simplicity, we will drop the subscript for $E_W$, $B_W$, and $T_W$
in detailed proofs while kept in the main statements.

Let us give a brief self-contained summary of the main concepts
related to distances between measures in optimal transport theory,
we refer to \cite{MR1964483,GS,MR2219334} for further details. A
probability measure $\pi$ on the product space $\real^N \times
\real^N$ is said to be a transference plan between $\mu \in
\prob(\real^N)$ and $\nu \in \prob(\real^N)$  if
\begin{equation}\label{marginal}
\pi(A \times \real^N)=\mu(A) \quad \text{and} \quad \pi(\real^N
\times A)=\nu(A)
\end{equation}
for all $A \in \B(\real^N)$. If $\mu, \nu \in \prob(\real^N)$,
then
$$
\Pi(\mu,\nu):=\{ \pi \in \prob(\real^N \times \real^N):
\eqref{marginal} \text{ holds for all }A \in \B(\real^N)\}
$$
denotes the set of admissible transference plans between $\mu$ and
$\nu$. Informally, if $\pi \in \Pi(\mu,\nu)$ then $d \pi(x,y)$
measures the amount of mass transferred from location $x$ to
location $y$. With this interpretation in mind note that
$\sup_{(x,y)\in \supp (\pi) } \abs{x-y}$ represents the maximum
distance that an infinitesimal element of mass from $\mu$ is moved
by the transference plan $\pi$. We will work with the
$\infty$-Wasserstein distance $d_\infty$ between two probability
measures $\mu,\,\nu$ defined by
\begin{equation}\label{disinfty}
    d_\infty(\mu,\nu) = \inf_{\pi \in \Pi(\mu,\nu)}  \sup_{(x,y)\in \supp (\pi) }
    \abs{x-y},
\end{equation}
which can take infinite values, but it is obviously finite for
compactly supported measures. This distance induces a complete
metric structure restricted to the set of probability measure with
finite moments of all orders, $\prob_\infty(\real^N)$, as proven
in \cite{GS}.

We remind that for $1\le p < \infty$ the distance $d_p$ between
two measures $\mu$ and $\nu$ is defined by
\[
    d_p^p(\mu,\nu)=\inf_{\pi\in\Pi(\mu,\nu)}\left\lbrace \iint_{\R^N\times \R^N}
    |x-y|^pd\pi(x,y)\right\rbrace.
\]
Note that $d_p(\mu,\nu)<\infty$ for $\mu,\nu\in \prob_p(\real^N)$
the set of probability measures with finite moments of order $p$.
Since $d_p(\mu,\nu)$ is increasing as a function of $1\le p <
\infty$, one can show that it converges to $d_\infty(\mu,\nu)$ as
$p\to\infty$. Since the distances are ordered with respect to $p$,
it is obvious that the topologies are also ordered. More
precisely, open sets for $d_p$ are always open sets for
$d_\infty$, and thus, $d_\infty$ induces the finest topology among
$d_p$, $1\leq p\leq \infty$. More properties of the distance
$d_\infty$ can be seen in \cite{MR2219334}.

Given $\mathcal T:\R^N \longrightarrow \R^N$ measurable, we say
that $\nu$ is the push-forward of $\mu$ through $\mathcal T$, $\nu
= \mathcal T\#\mu$, if $\nu [A] := \mu[\mathcal T^{-1}(A)]$ for
all measurable sets $A\subset\R^N$, equivalently
$$
\int_{\R^N} \varphi(x)\, d\nu(x) = \int_{\R^N} \varphi(\mathcal
T(x))\, d\mu(x)
$$
for all $\varphi\in C_b(\R^N)$. In case there is a map $\mathcal
T:\real^N\longrightarrow\real^N$ transporting $\mu$ onto $\nu$,
i.e. $\mathcal T \#\mu = \nu$, we immediately obtain
\[
    d_\infty(\mu,\nu) \leq \sup_{y\in\supp (\mu)} \abs{y-\mathcal T
    (y)}\,.
\]
This comes from \eqref{disinfty}, by using the transference plan
$\pi_{\mathcal T}= (\mathds{1}_{\real^N}\times \mathcal{T})\#\mu$.

\begin{lemma}\label{tech}
Assume that $\mu,\tilde \mu\in \prob(\real^N)$ are two convex
combinations: $\mu= m_0 \mu_0 + m_1 \mu_1$ and $\tilde{\mu} = m_0
\tilde \mu_0 + m_1 \mu_1$, where $\mu_0$ and $\tilde \mu_0$ are
supported in $B(x_0,\epsilon)$ for some $x_0\in\real^N$ and
$\epsilon >0$. Then $d_\infty(\mu, \tilde\mu) \le 2 \epsilon$.
\end{lemma}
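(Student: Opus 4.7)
The plan is to construct an explicit transference plan between $\mu$ and $\tilde\mu$ that keeps the common component $m_1 \mu_1$ fixed and transports $m_0 \mu_0$ onto $m_0 \tilde\mu_0$ by any transference plan — the key point being that since both small components are supported in $B(x_0,\epsilon)$, any such plan automatically moves mass by at most $2\epsilon$.

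First, I would pick any transference plan $\pi_0 \in \Pi(\mu_0, \tilde\mu_0)$; the product measure $\mu_0 \otimes \tilde\mu_0$ is the simplest choice and exists unconditionally. Next, I would define $\pi_1 := (\mathds{1}_{\R^N}\times \mathds{1}_{\R^N})\#\mu_1$, i.e.\ the push-forward of $\mu_1$ by the diagonal map, which transports $\mu_1$ to itself with zero cost. Finally, I would set
\[
\pi := m_0\, \pi_0 + m_1\, \pi_1.
\]

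The verification then splits into two quick steps. First, I would check that $\pi \in \Pi(\mu, \tilde\mu)$: for any $A \in \B(\R^N)$,
\[
\pi(A\times\R^N) = m_0\,\mu_0(A) + m_1\,\mu_1(A) = \mu(A),
\]
and similarly $\pi(\R^N\times A) = m_0\,\tilde\mu_0(A) + m_1\,\mu_1(A) = \tilde\mu(A)$, using the marginal properties of $\pi_0$ and the fact that $\pi_1$ is concentrated on the diagonal. Second, I would estimate the support: any $(x,y)\in \supp(\pi_0)$ satisfies $x,y \in \overline{B(x_0,\epsilon)}$, hence $|x-y|\le 2\epsilon$ by the triangle inequality through $x_0$; and any $(x,y)\in \supp(\pi_1)$ satisfies $x=y$. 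Therefore $\sup_{(x,y)\in \supp(\pi)} |x-y| \le 2\epsilon$, and plugging this into \eqref{disinfty} yields the claimed bound.

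There is no real obstacle here — the only minor points to handle are the degenerate cases $m_0=0$ or $m_1=0$ (both immediate: one of the two summands drops out and the same argument applies), and making sure that the support of the sum $m_0 \pi_0 + m_1 \pi_1$ is contained in the union $\supp(\pi_0)\cup\supp(\pi_1)$ when both weights are positive, which is standard for nonnegative measures.
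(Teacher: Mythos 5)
Your proof is correct and follows essentially the same construction as the paper: take $\pi_0$ any transference plan between $\mu_0$ and $\tilde\mu_0$, $\pi_1$ the diagonal plan for $\mu_1$, and set $\pi = m_0\pi_0 + m_1\pi_1$, then bound the support. The extra remarks on degenerate cases and on $\supp(\pi)\subset\supp(\pi_0)\cup\supp(\pi_1)$ are fine but not in the paper.
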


\begin{proof}
Let $\pi_1 \in \Pi(\mu_1, \mu_1)$ be the transport plan induced by
the identity map, that is
$$
\iint_{\real^N \times \real^N} \phi(x,y) d \pi_1(x,y) =
\int_{\real^N} \phi(x,x) d \mu_1 (x)
$$
and let $\pi_0 \in \Pi(\mu_0, \tilde \mu_0)$ be any transport plan
between $\mu_0$ and $\tilde \mu_0$. Note that $\pi = m_0 \pi_0
+m_1 \pi_1 \subset \Pi(\mu,\tilde \mu)$ and $\supp (\pi) = \supp
(\pi_0) \cup \supp (\pi_1)$. Since $\pi_1$ is supported on the
diagonal we have $\sup_{(x,y)\in \supp (\pi_1) } \abs{x-y}=0$. On
the other hand, $\supp (\pi_0) \subset \supp (\mu_0) \times \supp
(\tilde \mu_0) \subset B(x_0,\epsilon) \times B(x_0,\epsilon)$ and
therefore $\sup_{(x,y)\in \supp (\pi_0) } \abs{x-y} \le 2
\epsilon$. We conclude that $\sup_{(x,y)\in \supp (\pi)} \abs{x-y}
\le 2 \epsilon$ which implies   $ \inf_{\pi \in \Pi(\nu,\rho)}
\sup_{(x,y)\in \supp (\pi) } \abs{x-y} \le 2 \epsilon$.
\end{proof}


\section{Lower bound on the Hausdorff dimension of the support}

In this section we consider potentials which are strongly
repulsive at the origin and we prove that if $\Delta W \sim
-1/|x|^\beta$ as $x\to 0$, $0<\beta<N$, then the Hausdorff
dimension of the support of local minimizers of the  interaction
energy is greater or equal to $\beta$. Actually our result is
slightly stronger: we prove that if $\mu$ is a local minimizer
then the support of any part of $\mu$ has Hausdorff dimension
greater or equal to $\beta$. Let us illustrate the importance of
controlling not only the dimension of $\mu$, but also the
dimension of the parts of $\mu$. Suppose for example that $\Delta
W \sim -1/|x|$ as $x\to 0$, then our result implies that any part
of $\mu$ has Hausdorff dimension greater or equal to $1$. As a
consequence  $\mu$ can not have an atomic part. If $\Delta W \sim
-1/|x|^{1.5}$ as $x\to 0$ then $\mu$ can not  have a part
concentrated on a curve and so on.

\subsection{Hypotheses and statement of the main result}
In this section, we will assume that the potential $W: \real^N \to
(-\infty,+\infty]$ satisfies the following hypotheses:
\begin{enumerate}
\item[(H1)] $W$ is bounded from below.

\item[(H2)] $W$ is lower semicontinuous (l.s.c.).

\item[(H3)] $W$ is uniformly locally integrable: there exists $M>0$
such that  $\int_{B(x,1)} W(y) dy \le M$ for all $x \in \real^N$.
\end{enumerate}
In order to state the main results of this section we will also need the following two definitions:
\begin{definition}[Generalized Laplacian]  Suppose  $W: \real^N \to (-\infty,+\infty]$ is  locally integrable. The approximate Laplacian of $W$ is defined by
\begin{align*}
-\Delta^\epsilon W(x):=  \frac{2(N+2)}{\epsilon^2}\left( W(x)- \dashint_{B(0,\epsilon)} W(x+y) dy  \right) ,
\end{align*}
where $\dashint_{B(x_0,r)} f(x) dx$ stands for the average of $f$
over the ball of radius $r$ centered at $x_0$, and the generalized
Laplacian of $W$ is defined by
\begin{align*}
-\Delta^0 W(x):= \liminf_{n \to \infty} \left\{-\Delta^{(1/n)} W(x) \right\}.
\end{align*}
\end{definition}
\begin{definition}[$\beta$-repulsive potential] \label{def:rep}
Suppose  $W: \real^N \to (-\infty,+\infty]$ is  locally
integrable. $W$ is said to be $\beta$-repulsive  at  the origin if
there exists $\epsilon>0$ and $C>0$ such that
\begin{align}
&-\Delta^0 W(x)  \ge \frac{C}{ |x|^{\beta}} \qquad \text{for all } 0<|x|<\epsilon \label{def:rep1}
 \\  &-\Delta^0 W(0)=+\infty. \label{def:rep2}
\end{align}
\end{definition}
By doing a Taylor expansion one can easily check that $\Delta^0
W(x)=\Delta W(x)$ wherever $W$ is twice differentiable. In
particular  if  $W$ is twice differentiable  away from the origin
as it is often the case for potentials of interest, then
\eqref{def:rep1} simply means that  $-\Delta W(x)  \ge
{C}/|x|^{\beta}$ for all  $0<|x|<\epsilon$. The terminology
``$\beta$-repulsive'' is justified by the fact that the rate at
which $\Delta^0 W(x)$  goes to $-\infty$ as $x$ approaches the
origin quantifies the repulsive strength of the potential at the
origin, therefore the greater $\beta$ is the more  repulsive  the
potential is around the origin. This is the rigorous mathematical
formulation of what we meant in \eqref{paris}. Additionally to
hypotheses (H1)--(H3), we will need the following technical
assumption on the potential $W$:
\begin{enumerate}
\item[(H4) \quad \;] There exists $C^*>0$ such that
\begin{equation*}  \label{bddbelow}
\Delta^\epsilon W(x)< C^* \quad \forall x\in\real^N \text{ and } \forall \epsilon \in (0,1).
\end{equation*}
\end{enumerate}
We are now ready to state the main theorems of this section:

\begin{theorem}\label{dimension}
Suppose $W$ satisfies {\rm (H1)--(H4)} and let $\mu$ be a
compactly supported local minimizer of the interaction energy with
respect to the topology induced by $d_\infty$. If $W$ is
$\beta$-repulsive at the origin, $0<\beta < N$, then the Hausdorff
dimension of the support of any part of $\mu$ is greater or equal
to $\beta$.
\end{theorem}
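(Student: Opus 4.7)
The plan is by contradiction: assume some part $\rho$ of $\mu$ has support of Hausdorff dimension strictly less than $\beta$ and derive an inconsistency with $d_\infty$-local minimality. The core of the argument combines a first-order Euler--Lagrange condition---forcing the pointwise upper bound $((-\Delta^\epsilon W)*\mu)(x_0)\le 0$ at every $x_0\in\supp\mu$---against a Frostman-type divergence of $(-\Delta^0 W)*\rho$ on a positive $\rho$-measure subset of $\supp\rho$.

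For the Euler--Lagrange step, fix $x_0\in\supp\mu$, let $\epsilon_0>0$ be less than half the radius of the $d_\infty$-neighborhood of local minimality, and for $0<\epsilon'<\epsilon<\epsilon_0$ set $\rho':=\mu|_{B(x_0,\epsilon')}$ (a nontrivial part of $\mu$ with mass $m':=\mu(B(x_0,\epsilon'))>0$, since $x_0\in\supp\mu$) and $\sigma':=\frac{m'}{|B(x_0,\epsilon)|}\mathds{1}_{B(x_0,\epsilon)}$. The one-parameter family $\mu_t:=\mu+t(\sigma'-\rho')$, $t\in[0,1]$, consists of probability measures with $d_\infty(\mu_t,\mu)\le 2\epsilon$ by Lemma~\ref{tech}. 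Local minimality yields $\frac{d}{dt}\big|_{t=0^+}E_W[\mu_t]=2B_W[\mu,\sigma'-\rho']\ge 0$, which rearranges to
\[
\dashint_{B(x_0,\epsilon)}(W*\mu)(y)\,dy\ \ge\ \frac{1}{m'}\int_{B(x_0,\epsilon')}(W*\mu)\,d\mu.
\]
Sending $\epsilon'\to 0$ and using lower semicontinuity of $W*\mu$ (a consequence of (H1)--(H2) via Fatou) to bound the right-hand side below by $(W*\mu)(x_0)$, and then invoking the averaging identity
\[
\dashint_{B(x_0,\epsilon)}(W*\mu)(y)\,dy - (W*\mu)(x_0) = \frac{\epsilon^2}{2(N+2)}((\Delta^\epsilon W)*\mu)(x_0),
\]
which follows from the definition of $\Delta^\epsilon$ together with the commutativity $\Delta^\epsilon(W*\mu)=(\Delta^\epsilon W)*\mu$, I obtain the key inequality
\begin{equation*}
((-\Delta^\epsilon W)*\mu)(x_0)\le 0\qquad\text{for all }x_0\in\supp\mu,\ 0<\epsilon<\epsilon_0. \tag{$\star$}
\end{equation*}

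For the contradiction, the standard $\beta$-energy criterion for Hausdorff dimension yields $\iint|x-y|^{-\beta}\,d\rho(x)d\rho(y)=+\infty$. Combined with $\beta$-repulsiveness \eqref{def:rep1} on $|x-y|$ small and (H4) on $|x-y|$ bounded away from the origin, this upgrades to $\iint(-\Delta^0 W)(x-y)\,d\rho(x)d\rho(y)=+\infty$. Fubini--Tonelli applied to the nonnegative integrand $-\Delta^0 W+C^*$ gives a set $S\subset\supp\rho$ of positive $\rho$-measure on which $((-\Delta^0 W)*\rho)(x_0)=+\infty$; Fatou's lemma (after the same shift) yields $\liminf_n((-\Delta^{1/n}W)*\rho)(x_0)=+\infty$ for $x_0\in S$, and since $(-\Delta^\epsilon W)*(\mu-\rho)(x_0)\ge -C^*$ by (H4), I conclude $((-\Delta^{\epsilon_k}W)*\mu)(x_0)\to +\infty$ along some sequence $\epsilon_k\to 0$. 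Choosing $x_0\in S\subset\supp\mu$ contradicts $(\star)$.

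The main obstacle is the Euler--Lagrange step: $W*\mu$ need not be continuous, so the passage $\epsilon'\to 0$ must be handled via lower semicontinuity rather than Lebesgue differentiation, and admissibility of the $\mu_t$ as probability measures in the $d_\infty$-neighborhood has to be checked carefully via Lemma~\ref{tech} together with $\rho'\le\mu$. Once $(\star)$ is secured, the averaging identity is a direct computation and the Frostman--Fatou contradiction is classical potential theory.
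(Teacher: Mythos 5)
Your derivation of $(\star)$ is correct and is, up to presentation, the same chain as the paper's Proposition~\ref{constant} followed by the first half of Lemma~\ref{min-principle}: the inequality $\dashint_{B(x_0,\epsilon)} V_\mu \ge V_\mu(x_0)$ that you obtain by differentiating $E[\mu_t]$ and letting $\epsilon'\to0$ via lower semicontinuity is exactly (H3)+(H4)+Fubini away from the paper's inequality \eqref{loulou3}. Your route is slightly more streamlined (you avoid the measure-theoretic reduction to a positive-Lebesgue-measure set $\tilde A$ by perturbing directly toward a uniform density on the whole ball), but it produces the same intermediate estimate $\big((-\Delta^\epsilon W)*\mu\big)(x_0)\le 0$ for all $x_0\in\supp\mu$ and small $\epsilon$, and then the same $\big((-\Delta^0 W)*\mu\big)(x_0)\le 0$ via Fatou.

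The gap is in the Tonelli step of your contradiction argument. From $\iint\big(-\Delta^0 W(x-y)+C^\ast\big)\,d\rho(x)d\rho(y)=+\infty$ you conclude the existence of a set $S$ with $\rho(S)>0$ on which $\big((-\Delta^0 W)*\rho\big)(x_0)=+\infty$, but Tonelli only gives $\int g\,d\rho=+\infty$ for the nonnegative marginal $g$; this does not force $\rho(\{g=+\infty\})>0$ (consider $g(x)=1/x$ against Lebesgue measure on $[0,1]$). The fix is cheap: by Chebyshev, for every $M$ the set $\{g>M\}$ has positive $\rho$-measure, so pick $M>C^\ast$ and any $x_0\in\{g>M\}\cap\supp\rho$; since $\big((-\Delta^0 W)*\nu\big)(x_0)\ge -C^\ast\nu(\R^N)$ for $\nu=\mu-\rho$ by (H4), one gets $\big((-\Delta^0 W)*\mu\big)(x_0)>0$, which is the contradiction you want. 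The paper sidesteps the issue entirely by arguing directly rather than by contradiction: it restricts $\rho$ to a small ball $B(x_0,\epsilon/2)$ to form $\mu_0$ (so that \emph{all} pairs in its support fall inside the $\beta$-repulsivity range), integrates the inequality $(\Delta^0 W*\mu)\ge 0$ against $\mu_0$, splits $\mu=\mu_0+\mu_1$ and bounds the $\mu_1$-contribution by $C^\ast m_0 m_1$, obtaining a \emph{finite} $\beta$-energy for $\mu_0/m_0$ and then invoking Proposition~\ref{falconer} directly. Either repair works; the paper's direct route is cleaner because it never needs a pointwise blow-up statement.
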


\begin{remark}\label{dimension-loc}
Observe that {\rm (H3)} and {\rm (H4)} are conditions which
restrict the growth of the potential and its derivatives at
$\infty$. For instance, a potential growing algebraically at
$\infty$ does not satisfy those assumptions. However, if we are
only interested in the dimensionality of the support for compactly
supported local minimizers, Theorem {\rm\ref{dimension}} holds
under weaker assumptions not restricting the growth of the
potential at $\infty$. Namely, {\rm (H3)} and {\rm (H4)} can be
substituted by {\rm (H3-loc)} and {\rm (H4-loc)}:
\begin{enumerate}
\item[{\rm (H3-loc)}] $W$ is locally integrable.

\item[{\rm (H4-loc)}] For every compact subset $K$ of $\real^N$
there exists $C_K^*>0$ such that
\begin{equation*}  \label{bddbelow2}
\Delta^\epsilon W(x)< C_K^* \quad \forall x\in K \text{ and }
\forall \epsilon \in (0,1),
\end{equation*}
\end{enumerate}
with obvious changes in the proof.
\end{remark}

\begin{remark}
In Theorem {\rm\ref{dimension}} (resp. Remark
{\rm\ref{dimension-loc}}) potential $W$ is assumed to be
$\beta$-repulsive at the origin and to satisfy  hypotheses {\rm
(H1)--(H4)} (resp. {\rm (H1)-(H2)-(H3-loc)-(H4-loc)}). Whereas
hypotheses {\rm (H1)--(H3)} (resp. {\rm (H1)-(H2)-(H3-loc)}) are
easily verified for a given potential, hypotheses {\rm (H4)} or
{\rm (H4-loc)} and the $\beta$-repulsivity are not as transparent.
To clarify the meaning of these more technical assumptions let us
consider the case where $W$ is smooth away from the origin and
satisfies
\begin{equation} \label{aba}
-\Delta W(x)  \ge \frac{C}{|x|^{\beta}} \qquad  \text{for all  $0<|x|<\epsilon$}
\end{equation}
for some $0<\beta<N$. Such a potential satisfies \eqref{def:rep1}
as pointed out in the comment after Definition {\rm\ref{def:rep}}.
Moreover most potentials of interest satisfying \eqref{aba}  will
also satisfy \eqref{def:rep2} and either {\rm (H4)} or {\rm
(H4-loc)}, but of course this need to be checked case by case. In
subsection {\rm 3.3} we consider some typical repulsive-attractive
potentials satisfying \eqref{aba} and we show that they satisfy
\eqref{def:rep2} and either {\rm (H4)} or {\rm (H4-loc)} depending
on their behavior at infinity.
\end{remark}

\subsection{Proof of Theorem \ref{dimension}}
First note that without loss of generality we can replace
hypothesis $(H1)$ by
\begin{enumerate}
\item[(H1')] $W$ is nonnegative
\end{enumerate}
since adding a constant to the potential $W$ does not affect the
local minimizers of $E_W$. The following lemma is classical:

\begin{lemma} \label{lsc-conv}
Suppose $W$ satisfies {\rm (H1')} and {\rm (H2)} and let $\mu \in
\prob(\real^N)$.  Then the function $V_\mu:\real^N \to
[0,+\infty]$ defined by
$$
V_{\mu}(x)= (W*\mu)(x)= \int_{\real^N}W(x-y) d\mu(y)
$$
is lower semicontinuous.
\end{lemma}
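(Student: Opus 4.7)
The plan is to reduce the statement directly to Fatou's lemma, exploiting the fact that hypothesis (H1') makes the integrand nonnegative. Lower semicontinuity of $V_\mu$ at a point $x\in\real^N$ means that for every sequence $x_n\to x$, one has $\liminf_{n\to\infty} V_\mu(x_n)\ge V_\mu(x)$, where both sides are allowed to take the value $+\infty$.

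First, I fix $x\in\real^N$ and an arbitrary sequence $x_n\to x$. For each $y\in\real^N$, the translated sequence $x_n-y$ converges to $x-y$, so the assumption (H2) that $W$ is l.s.c.\ yields the pointwise inequality
\begin{equation*}
\liminf_{n\to\infty} W(x_n-y) \;\ge\; W(x-y) \qquad \text{for every } y\in\real^N.
\end{equation*}
Next I apply Fatou's lemma to the sequence of nonnegative measurable functions $y\mapsto W(x_n-y)$, which is permitted precisely because (H1') guarantees $W\ge 0$. This gives
\begin{equation*}
\liminf_{n\to\infty} \int_{\real^N} W(x_n-y)\,d\mu(y) \;\ge\; \int_{\real^N} \liminf_{n\to\infty} W(x_n-y)\,d\mu(y) \;\ge\; \int_{\real^N} W(x-y)\,d\mu(y),
\end{equation*}
which is exactly $\liminf_{n\to\infty} V_\mu(x_n)\ge V_\mu(x)$, establishing the claim.

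There is essentially no obstacle here; the only mildly delicate points to be aware of are that $V_\mu(x)$ may equal $+\infty$ (which causes no problem since the inequality is trivially valid in that case, and Fatou's lemma holds in $[0,+\infty]$) and that the measurability of $y\mapsto W(x_n-y)$ follows from the Borel measurability of $W$ implied by (H2). Thus a short, self-contained proof in the paper should suffice.
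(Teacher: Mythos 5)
Your proof is correct and follows exactly the same route as the paper: use the lower semicontinuity of $W$ to get the pointwise inequality $\liminf_n W(x_n-y)\ge W(x-y)$, then apply Fatou's lemma (justified by nonnegativity from (H1')) to pass the $\liminf$ through the integral. The paper's version is just a more condensed chain of inequalities; no substantive difference.
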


\begin{proof}  Suppose $x_n \to x$, then by Fatou's lemma we have
\begin{align*}
V_{\mu}(x)= \int_{\real^N} W(x-y) \; d\mu(y)
& \le\int_{\real^N} \liminf_n W(x_n-y)  \; d\mu(y) \\
 & \le \liminf_n \int_{\real^N}  W(x_n-y)   \; d\mu(y)
= \liminf_n V_{\mu}(x_n)
\end{align*}
as desired.
\end{proof}

Suppose now that  $W$ satisfies (H1')--(H4). Note that hypothesis
(H4) implies that $-\Delta^0 W \ge -C^*$ and as a consequence, for
any $\mu\in \prob(\real^N)$,  the function
$$
(-\Delta^0 W * \mu)(x)= \int_{\real^N}(-\Delta^0 W)(x-y) d\mu(y)=
\int_{\real^N}\left[(-\Delta^0 W)(x-y)+C^*\right] d\mu(y) -C^*
$$
is defined for all $x$ and
$
-\Delta^0 W * \mu: \real^N \to [-C^*, +\infty].
$

\begin{lemma} \label{min-principle}
Suppose that  $W$ satisfies {\rm (H1')--(H4)} and let $\mu \in
\prob(\real^N)$. If $x_0$ is a local min of $V_\mu=W*\mu$, in the
sense that there exists $\epsilon_0>0$ such that
\begin{equation} \label{locmin}
 V_\mu(x_0) \le V_\mu(x) \text{ for almost every }x \in B(x_{0},\epsilon_0),
\end{equation}
then $(\Delta^0 W * \mu)(x_0) \ge 0$.
\end{lemma}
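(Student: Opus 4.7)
The plan is to transfer the local-minimum information from $V_\mu$ to the approximate Laplacian via a Fubini identity, deduce $-\Delta^\epsilon V_\mu(x_0)\le 0$ for every small $\epsilon$, and then pass $\liminf_{n\to\infty}$ inside the $\mu$-integral with Fatou. The main obstacle throughout is that $-\Delta^\epsilon W$ is only bounded from below (not above) and $W$ itself may equal $+\infty$; hypothesis (H4) is exactly what makes all the signed manipulations legal.

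First I would verify the commutation identity
$$-\Delta^\epsilon V_\mu(x_0) \;=\; \bigl((-\Delta^\epsilon W)*\mu\bigr)(x_0),$$
by rewriting $-\Delta^\epsilon W + C^* \ge 0$ (thanks to (H4)) and applying Tonelli's theorem separately to each of the two resulting nonnegative integrals; the additive constant $C^*$ cancels. This is clean when $V_\mu(x_0)<+\infty$; the degenerate case $V_\mu(x_0)=+\infty$ is handled separately using Lemma \ref{lsc-conv}, since lower semicontinuity forces $V_\mu\equiv+\infty$ on a neighborhood of $x_0$ and both sides of the desired identity are $-\infty$.

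Next I would invoke the hypothesis on $x_0$: for $\epsilon\in(0,\epsilon_0)$, $V_\mu(x_0)\le V_\mu(x_0+z)$ for a.e.\ $z\in B(0,\epsilon)$, so
$$V_\mu(x_0) \;\le\; \dashint_{B(0,\epsilon)} V_\mu(x_0+z)\,dz,$$
which means $-\Delta^\epsilon V_\mu(x_0)\le 0$ and therefore $\int_{\R^N} -\Delta^\epsilon W(x_0-y)\,d\mu(y) \le 0$ for every $\epsilon\in(0,\epsilon_0)$.

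To conclude, I would apply Fatou's lemma to the nonnegative functions $y\mapsto -\Delta^{1/n}W(x_0-y)+C^*$, for $n$ large enough that $1/n<\epsilon_0$. This yields
$$\int_{\R^N} -\Delta^0 W(x_0-y)\,d\mu(y) \;\le\; \liminf_{n\to\infty}\,\int_{\R^N} -\Delta^{1/n}W(x_0-y)\,d\mu(y) \;\le\; 0,$$
(the $C^*$ again cancels on both sides), which is exactly $(\Delta^0 W*\mu)(x_0)\ge 0$. The whole argument is essentially a submean-value inequality for $V_\mu$ pushed through convolution, and the only non-routine input is the uniform lower bound on $-\Delta^\epsilon W$ guaranteed by (H4).
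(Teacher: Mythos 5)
Your overall strategy (commutation of $-\Delta^\epsilon$ with convolution via Tonelli after adding the constant $C^*$, then Fatou with the nonnegative family $-\Delta^{1/n}W + C^*$) is the same as the paper's, and the non-degenerate part of the argument is fine. The problem is in how you dispose of the case $V_\mu(x_0)=+\infty$, and in the fact that you never invoke (H3) -- which is exactly what is needed here.

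Two things go wrong in your degenerate case. First, it is not lower semicontinuity that forces $V_\mu\equiv+\infty$ near $x_0$: LSC only gives that $V_\mu$ exceeds any fixed finite bound on a (shrinking) neighborhood. What actually forces $V_\mu=+\infty$ a.e.\ in $B(x_0,\epsilon_0)$ is the local-minimum inequality \eqref{locmin} itself. Second, and more seriously, the conclusion you draw from it is wrong. If $V_\mu(x_0)=+\infty$ and $V_\mu=+\infty$ a.e.\ near $x_0$, then $\dashint_{B(0,\epsilon)}V_\mu(x_0+z)\,dz=+\infty$ as well, so $-\Delta^\epsilon V_\mu(x_0)$ is of the form $\infty-\infty$ and is simply undefined -- it is not $-\infty$. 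Meanwhile $((-\Delta^\epsilon W)*\mu)(x_0)\ge -C^*$ always, by (H4), so the right-hand side certainly cannot be $-\infty$ either. Thus the ``identity with both sides $-\infty$'' does not hold, and more to the point the inequality $-\Delta^\epsilon V_\mu(x_0)\le 0$ that you need is not available in this case.

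The correct move, which the paper makes, is to \emph{exclude} the case $V_\mu(x_0)=+\infty$ using (H3). By (H3) and Fubini, $\int_{B(x_0,1)}V_\mu(x)\,dx \le \int_{\R^N}\int_{B(x_0-y,1)}W(z)\,dz\,d\mu(y)\le M<\infty$, so $V_\mu$ is finite a.e.\ on $B(x_0,1)$; together with \eqref{locmin} this forces $V_\mu(x_0)<+\infty$. Once that is in place, your Tonelli step yields finite, well-defined quantities and the rest of the proof goes through as you describe. So the gap is localized but real: you need (H3) to rule out the degenerate case rather than trying to absorb it.
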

\begin{proof}
Assume that $x_0$ satisfies \eqref{locmin}. We first show that
$V_{\mu}(x_0)<+\infty$. If it were not the case we would have that
$V_\mu=+\infty$ a.e. in $B(x_{0},\epsilon_0)$. But hypothesis (H3)
and Fubini's Theorem imply that
$$
\int_{B(x_0,1)} V_{\mu}(x) \,dx \le
\int_{\real^N}\int_{B(x_0-y,1)} W(z) \,dz \,d\mu(y)\leq M
$$
and therefore $V_{\mu}$ is finite almost everywhere in $B(x_0,1)$,
contradicting the fact that  $V_\mu=+\infty$ a.e. in
$B(x_{0},\epsilon_0)$. Now, for $\epsilon\le \epsilon_0$ we have
\begin{align}
0 &\le \frac{2(N+2)}{\epsilon^2}
\left(\dashint_{B(0,\epsilon)} V_{\mu}(x_0+x) dx  -V_{\mu}(x_0)\right) \nonumber\\
&=\frac{2(N+2)}{\epsilon^2}  \left(\int_{\real^N}
\dashint_{B(0,\epsilon)} W(x_0+x-y) dx d\mu(y)  -
\int_{\real^N}W(x_0-y) d\mu(y) \right) \,.\label{loulou}
\end{align}
Note that hypothesis (H4) implies that
\begin{equation*} 
 \dashint_{B(0,\epsilon)} W(x_0+x-y) dx \le W(x_0-y)+ \frac{C^* \epsilon^2}{2(d+2)}.
\end{equation*}
Since $V_{\mu}(x_0)<+\infty$, the functions $y\mapsto W(x_0-y)$
and $y\mapsto \dashint_{B(0,\epsilon)} W(x_0+x-y) dx$ are
$\mu$-integrable and the difference of the integrals in
\eqref{loulou} is equal to the integral of the difference.
Therefore we have:
\begin{equation} \label{loulou3}
0 \le   \int_{\real^N}
\frac{2(d+2)}{\epsilon^2}\left(\dashint_{B(0,\epsilon)} W(x_0-y+x)
dx   -  W(x_0-y) \right) d\mu(y) = \int_{\real^N} \Delta^\epsilon
W(x_0-y) d\mu(y)\,.
\end{equation}
Because of hypothesis (H4), we have that $-\Delta^\epsilon W+C^*
\ge 0$ for all $\epsilon \in (0,1)$. Therefore using Fatou's Lemma
and \eqref{loulou3}:
\begin{align*} 
 \int_{\real^N} \liminf_{n\to\infty} & \left\{ -\Delta^{(1/n)} W(x_0-y) + C^* \right\} d\mu(y)
 \nonumber\\
 &\le \liminf_{n\to\infty} \int_{\real^N} \left[-\Delta^{(1/n)} W(x_0-y) + C^*\right] d\mu(y) \le
 C^*\,,
\end{align*}
that is,  $(\Delta^{0} W * \mu)(x_0) \ge 0$.
\end{proof}

\begin{proposition} \label{constant}
Suppose that $W$ satisfies {\rm (H1')-(H2)-(H3)}. Let $\mu$ be a
local minimizer of the interaction energy with respect to the
$d_\infty$ and assume that $E[\mu]<+\infty$. Then any point $x_0
\in \text{supp }(\mu)$ is a local minimizer of $V_\mu$,
 in the sense that there exists $\epsilon_0>0$ such that
\begin{equation*}
V_\mu(x_0) \le V_\mu(x) \text{ for almost every }x \in B(x_0,\epsilon_0).
\end{equation*}
\end{proposition}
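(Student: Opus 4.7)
My plan is a perturbation argument by contradiction, showing that if the conclusion fails at some $x_0\in\supp(\mu)$ then one can construct a competitor $\tilde\mu$ with $d_\infty(\mu,\tilde\mu)<\delta$ (the local-minimality radius) and $E[\tilde\mu]<E[\mu]$. Concretely, fix $\epsilon_0:=\min(\delta/3,1/2)$ and assume the conclusion fails at $x_0$ with this $\epsilon_0$. The essential infimum $m^\ast:=\mathrm{ess\,inf}_{B(x_0,\epsilon_0)}V_\mu$ then satisfies $m^\ast<V_\mu(x_0)$, and (H3) combined with Fubini forces $V_\mu$ to be Lebesgue-a.e.\ finite on $B(x_0,1)$, so $m^\ast$ is finite. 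Pick any finite $M'\in(m^\ast,V_\mu(x_0))$; by construction $A:=\{V_\mu\le M'\}\cap B(x_0,\epsilon_0)$ has $|A|>0$. Using the lower semicontinuity of $V_\mu$ (Lemma \ref{lsc-conv}) at $x_0$, I select further $M>M'$ and $s>0$ with $V_\mu>M$ on $B(x_0,s)$; this selection succeeds identically whether $V_\mu(x_0)$ is finite or $+\infty$. Since $x_0\in\supp(\mu)$, $m:=\mu(B(x_0,s))>0$.

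For a scaling parameter $\theta\in(0,1)$ to be chosen, define
\[
\sigma:=\theta\,\mu|_{B(x_0,s)},\qquad \nu:=\tfrac{\theta m}{|A|}\,\mathds{1}_A\,dx,\qquad \tilde\mu:=\mu-\sigma+\nu,
\]
a probability measure. Writing $\mu=\theta m\,\mu_0+(1-\theta m)\mu_1$ and $\tilde\mu=\theta m\,\tilde\mu_0+(1-\theta m)\mu_1$ with $\mu_0:=\sigma/(\theta m)$, $\tilde\mu_0:=\nu/(\theta m)$ and $\mu_1:=(\mu-\sigma)/(1-\theta m)$, Lemma \ref{tech} yields $d_\infty(\mu,\tilde\mu)\le 2\epsilon_0<\delta$ since $\mu_0,\tilde\mu_0$ are both supported in $B(x_0,\epsilon_0)$. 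Expanding the energy, $E[\tilde\mu]-E[\mu]=2B[\mu,\nu-\sigma]+B[\nu-\sigma,\nu-\sigma]$; the linear term equals $\int V_\mu\,d\nu-\int V_\mu\,d\sigma\le\theta mM'-\theta mM=-\theta m(M-M')$. For the quadratic term I use $W\ge 0$ (H1') to drop the nonnegative cross term $2B[\nu,\sigma]$, leaving $B[\nu-\sigma,\nu-\sigma]\le E[\nu]+E[\sigma]$; here $E[\nu]\le C_1\theta^2$ via (H3), which bounds $\int_A W(x-y)\,dx$ after the substitution $z=x-y$ (using $\epsilon_0\le 1/2$), and $E[\sigma]=\theta^2 E[\mu|_{B(x_0,s)}]\le \theta^2 E[\mu]<\infty$. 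Hence $E[\tilde\mu]-E[\mu]\le -\theta m(M-M')+C\theta^2$ for a constant $C$ independent of $\theta$, negative for $\theta$ small enough, contradicting local minimality.

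The principal subtlety is that the proposition is only meaningful when $V_\mu(x_0)<+\infty$: were $V_\mu(x_0)=+\infty$, the claim ``$V_\mu(x_0)\le V_\mu(x)$ a.e.'' would require $V_\mu=+\infty$ a.e.\ on a ball, against (H3). The argument above is set up precisely so that $V_\mu(x_0)=+\infty$ falls under the negation of the conclusion and produces its own contradiction, thereby simultaneously proving $V_\mu(x_0)<+\infty$ for $x_0\in\supp(\mu)$ and the essential-infimum inequality. A secondary technical point is that $\mu$ may carry an atom at $x_0$, so $E[\sigma]$ does \emph{not} decay as $s\to 0$; the independent scaling parameter $\theta$ is what resolves this, as the loss term $E[\nu]+E[\sigma]$ is of order $\theta^2$ while the negative gain $-\theta m(M-M')$ is only of order $\theta$.
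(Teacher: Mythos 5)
Your proof is correct and follows essentially the same strategy as the paper's: perturb $\mu$ by removing a small fraction of its mass from a ball around $x_0$ (where $V_\mu$ is large by lower semicontinuity), spread that mass uniformly over a positive-measure set $A\subset B(x_0,\epsilon_0)$ where $V_\mu$ is small, invoke Lemma \ref{tech} to bound the $d_\infty$ displacement by $2\epsilon_0$, and expand the energy in the mass-moved parameter so that the first-order (linear) term is strictly negative while the quadratic remainder is bounded via (H1') and (H3). Your version is slightly cleaner in two respects: using the essential infimum plus a lower-semicontinuity gap avoids the increasing-union bookkeeping, and it transparently covers the case $V_\mu(x_0)=+\infty$, which the paper's inequality \eqref{bibi} handles only implicitly.
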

\begin{proof}
We argue by contradiction. Assume that there exists $x_0\in
\textrm{supp}(\mu)$ which is not a local minimum of $V_\mu$.  Fix
$\epsilon>0$. Then there exists a set $A\subset B(x_0,\epsilon)$
of positive Lebesgue measure, such that for $x\in A$,
$V_\mu(x)<V_\mu(x_0)$. The set $A$ can be written as follows:
$$
A=\cup_{n=1}^\infty\{ x\in A;\, V_{\mu}(x)\leq V_{\mu}(x_0)-1/n\},
$$
that is $A$ is an increasing union of measurable sets. Thanks to
the continuity from below of the Lebesgue measure, it implies that
$$
0<|A|=\lim_{n\to\infty}|\{ x\in A;\, V_{\mu}(x)\leq V_{\mu}(x_0)-1/n\}|,
$$
and there exists $n_0$ such that $\tilde A:=\{ x\in A;\,
V_{\mu}(x)\leq V_{\mu}(x_0)-1/{n_0}\}$ is of positive Lebesgue
measure. Thanks to the lower semicontinuity of $V_\mu$, there
exists  $\eta\in (0,\epsilon)$ such that
\begin{equation} \label{bibi}
\inf_{B(x_0,\eta)} V_\mu\geq V_\mu(x_0)-\frac 1{2n_0}\geq \sup_{\tilde A}V_\mu+\frac 1{2n_0}.
\end{equation}
Notice that $x_0\in \textrm{supp}(\mu)$ implies
$\mu(B(x_0,\eta))>0$. We can therefore define the probability
measures $\mu_0,\,\mu_{\tilde A}$ by
$$
 \mu_0(B)= \frac{1}{m_0} \mu(B \cap B(x_0,\eta)), \quad  \mu_{\tilde A}(B)= \frac{1}{|\tilde A|} |B\cap \tilde A|
$$
for any Borel set $B \in \B(\real^N)$, where
$m_0:=\mu(B(x_0,\eta))$. Let us now write $\mu$ as a convex
combination $\mu=m_0 \mu_0+ m_1 \mu_1$, and define the curve of
measures
$$
\mu_t= (m_0-t)\mu_0+t\mu_{\tilde A}+m_1\mu_1.
$$
It is clear by construction that $\mu_t\in\prob(\R^N)$ for
$t\in[0,m_0]$. Note that $\mu_t$ is obtained from $\mu$ by
transporting an amount $t$ of mass from the region $B(x_0, \eta)$
and by distributing it uniformly in the region $\tilde A$. Since
both $B(x_0, \eta)$ and $\tilde A$ are contained in $B(x_0,
\epsilon)$, the mass is transported by a distance smaller than $2
\epsilon$ and therefore we have $d_{\infty}(\mu, \mu_t)\le
2\epsilon$, see Lemma \ref{tech} for details. Inequality
\eqref{bibi} shows that the function $V_\mu$ is greater on the
region $B(x_0, \eta)$ than on the region $\tilde A$, therefore one
would expect that transporting mass from one region to the other
will decrease the interaction energy. Indeed we will  show that
$E[\mu_t]< E[\mu]$ for $t$ small enough. Since $\epsilon$ was
arbitrary, this will imply that we can always find a probability
measure arbitrarily close to $\mu$ (in the sense of the
$d_\infty$) with strictly smaller energy. This is a contradiction
concluding the proof.

We  are left to show that $E[\mu_t]< E[\mu]$ for $t$ small enough.
Since $0\leq E[\mu]<\infty$ and given by
$$
E[\mu]=m_0^2 E[\mu_0]+2 m_0m_1 B[\mu_0,\mu_1]+ m_1^2E[\mu_1]\,
$$
then the three terms $E[\mu_0], B[\mu_0,\mu_1]$ and $E[\mu_1]$ are
all positive and finite.  Note that $E[\mu_{\tilde A}]$ is also
finite: indeed, since $W$ is locally integrable by (H3), we have
\begin{equation*}
 E[\mu_{\tilde A}] =
\iint_{\real^N\times\real^N} W(x-y)\,d\mu_{\tilde
A}(x)\,d\mu_{\tilde A}(y) \leq\frac 1{|\tilde
A|^2}\iint_{B(x_0,\epsilon)\times B(x_0,\epsilon)} W(x-y)\,dx\,dy
<+\infty.
\end{equation*}
From \eqref{bibi} and the fact that $B[\mu,\mu_0] \le
\frac{1}{m_0}E[\mu]<+\infty$, we also have that
  \begin{equation}
  B[\mu,\mu_{\tilde A}] +  \frac{1}{2n_0} \le   B[\mu,\mu_0] < +\infty \label{bibibi}.
  \end{equation}
Using all these, we can show that all combinations of the bilinear
form $B[\cdot,\cdot]$ for the measures $\mu_0$, $\mu_1$, and
$\mu_{\tilde A}$ are finite:
\begin{align}
& E[\mu_0]< +\infty \quad , \quad E[\mu_1]< +\infty \quad , \quad E[\mu_{\tilde A}]< +\infty \quad , \quad B[\mu_1,\mu_0] < +\infty\,,\label{hi}\\
&  B[\mu_{\tilde A}, \mu_0] \le \frac{1}{m_0} B[\mu_{\tilde A},
\mu] < +\infty \quad , \quad B[\mu_{\tilde A}, \mu_1] \le
\frac{1}{m_1} B[\mu_{\tilde A}, \mu] < +\infty\,, \label{haha}
\end{align}
where we have used \eqref{bibibi} in order to obtain \eqref{haha}.
Note that in \eqref{haha} we have assumed  $m_1\neq 0$. If $m_1=0$
then $\mu_1$ can be chosen to be zero and therefore $B[\mu_{\tilde
A}, \mu_1]<+\infty$ trivially holds. Using
\eqref{hi}-\eqref{haha}, we are allowed to expand $E[ \mu_t]$ as:
\begin{align}
E[ \mu_t]=&\, E[(m_0-t)\mu_0 + m_1\mu_1+ t \mu_{\tilde A}] \nonumber\\
=&\, (m_0-t)^2 E[\mu_0]+ m_1^2 E[\mu_1]+t^2E[\mu_{\tilde A}] \nonumber\\
& +2(m_0-t)m_1 B[\mu_0,\mu_1]+2 (m_0-t)t B[\mu_0,\mu_{\tilde A}]+2 m_1tB[\mu_1,\mu_{\tilde A}]\nonumber\\
=&\, m_0^2 E[\mu_0]+2 m_0m_1 B[\mu_0,\mu_1]+ m_1^2E[\mu_1] \nonumber\\
& +2t \Big(m_0 B[m_0,\mu_{\tilde A}]+m_1 B[\mu_1,\mu_{\tilde A}]\Big) - 2t \Big( m_0 B[\mu_0,\mu_0] + m_1 B[\mu_0,\mu_1]\Big)\nonumber\\
& + t^2 E[\mu_0]+t^2 E[\mu_{\tilde A}] - 2 t^2 B[\mu_0,\mu_{\tilde A}]\nonumber\\
=&\, E[\mu]+  2t \Big( B[\mu_{\tilde A},\mu]-B[\mu_0,\mu] \Big)+
t^2 T[\mu_0,\mu_{\tilde A}] \,.\label{batman2}
\end{align}
Note that in the above computations we have only used the
bilinearity of $B[\cdot,\cdot]$ on the space of positive measures.
However, a formal computation  using the bilinearity of
$B[\cdot,\cdot]$ on the space of signed measures leads to the same
result in a much simpler way:
 \begin{equation*}
 E[ \mu_t]= E[\mu - t \mu_0+t\mu_{\tilde A}]
= E[\mu]+  2t \Big( B[\mu_{\tilde A},\mu]-B[\mu_0,\mu] \Big)+ t^2
T[\mu_0,\mu_{\tilde A}].
\end{equation*}
To conclude the proof note that because of \eqref{bibibi} the term
$B[\mu_{\tilde A},\mu]-B[\mu_0,\mu]$ appearing in \eqref{batman2}
is strictly negative and since the term $T[\mu_0,\mu_{\tilde
A}]=E[\mu_0]- 2 B[\mu_0,\mu_{\tilde A}]+ E[\mu_{\tilde A}] $ is
finite we can choose $t$ small enough  so  that  $E[\mu_t]<
E[\mu]$. This concludes the proof.
\end{proof}

Under the additional assumption that $W$ is not singular at the
origin, we can obtain a slightly stronger version of Proposition
\ref{constant} which will be needed in section 4.

\begin{proposition} \label{constant2}
Assume that $W$ and $\mu$ satisfy the same hypotheses than in
Proposition {\rm\ref{constant}}. Assume moreover that
$W(0)<+\infty$. Then any point $x_0 \in \supp(\mu)$ is a local
minimizer of $V_\mu$ in the classical sense and $V_\mu$  is
constant on any connected compact set  $K \subset \supp(\mu)$.
\end{proposition}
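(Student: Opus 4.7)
The plan is to strengthen Proposition \ref{constant} in two steps: first, upgrade the a.e.\ local-minimum property at every $x_0\in\supp(\mu)$ to a classical pointwise local minimum of $V_\mu$; then combine this with the lower semicontinuity of $V_\mu$ and a clopen argument to obtain constancy of $V_\mu$ on any connected compact subset of $\supp(\mu)$.

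For the first step, assume for contradiction that some $x_0\in\supp(\mu)$ is not a classical local minimum of $V_\mu$, i.e.\ for every $\epsilon>0$ there is $y\in B(x_0,\epsilon)$ with $V_\mu(y)<V_\mu(x_0)$. Set $\delta:=V_\mu(x_0)-V_\mu(y)>0$. By Lemma \ref{lsc-conv}, $V_\mu$ is l.s.c., so the superlevel set $\{V_\mu>V_\mu(x_0)-\delta/4\}$ is open and contains $x_0$, hence contains some ball $B(x_0,\eta)$. Mimicking the construction of Proposition \ref{constant}, but replacing the uniform redistribution $\mu_{\tilde A}$ by a Dirac mass at $y$, set $m_0:=\mu(B(x_0,\eta))>0$ and $\mu_0:=\mu|_{B(x_0,\eta)}/m_0$, and define
\begin{equation*}
\mu_t:=\mu-t\,\mu_0+t\,\delta_y,\qquad t\in[0,m_0].
\end{equation*}
Then $\mu_t\in\prob(\real^N)$, and since an amount of mass $t$ is transported by a distance at most $\eta+\epsilon$, one has $d_\infty(\mu,\mu_t)\le\eta+\epsilon$. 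Expanding as in \eqref{batman2},
\begin{equation*}
E[\mu_t]=E[\mu]+t\,\Bigl(V_\mu(y)-\int V_\mu\,d\mu_0\Bigr)+t^2\,T[\mu_0,\delta_y].
\end{equation*}
The linear coefficient is at most $(V_\mu(x_0)-\delta)-(V_\mu(x_0)-\delta/4)=-3\delta/4<0$, since $V_\mu>V_\mu(x_0)-\delta/4$ on $\supp(\mu_0)\subset B(x_0,\eta)$. The crucial point is that $W(0)<+\infty$ ensures $E[\delta_y]=W(0)/2<+\infty$; together with $E[\mu_0]\le E[\mu]/m_0^2<+\infty$ and $B[\mu_0,\delta_y]\le V_\mu(y)/(2m_0)<+\infty$, this makes $T[\mu_0,\delta_y]$ finite. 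Choosing $t>0$ small enough yields $E[\mu_t]<E[\mu]$, contradicting the $d_\infty$-local minimality of $\mu$ since $\epsilon$ was arbitrary.

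For the second step, let $K\subset\supp(\mu)$ be compact and connected. By lower semicontinuity of $V_\mu$ and compactness of $K$, $V_\mu$ attains its minimum on $K$ at some $x^*\in K$; denote this value by $c^*$. The set $K_1:=\{x\in K:V_\mu(x)=c^*\}$ coincides with $\{x\in K:V_\mu(x)\le c^*\}$, hence is closed in $K$ by lower semicontinuity, and is nonempty since $x^*\in K_1$. To see that $K_1$ is also open in $K$, pick $y\in K_1$: the first step applied at $y$ yields $\epsilon>0$ such that $V_\mu\ge V_\mu(y)=c^*$ on $B(y,\epsilon)$, which combined with $V_\mu\ge c^*$ on $K$ forces $V_\mu\equiv c^*$ on $B(y,\epsilon)\cap K\subset K_1$. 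Thus $K_1$ is a nonempty clopen subset of the connected space $K$, giving $K_1=K$.

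The main obstacle is the first step. The novelty compared to Proposition \ref{constant} is the use of a Dirac perturbation at the single offending point $y$: this is only admissible because $W(0)<+\infty$ makes its self-interaction finite. Without this assumption the term $E[\delta_y]$ in the quadratic coefficient is infinite and the comparison breaks down, which is consistent with the fact that for potentials singular at the origin one cannot in general force $V_\mu$ to agree with $V_\mu(x_0)$ at isolated points outside $\supp(\mu)$.
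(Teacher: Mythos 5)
Your first step is correct and is essentially the paper's argument: perturbing by a Dirac mass $\delta_y$ at the competing point, and observing that $W(0)<+\infty$ keeps $E[\delta_y]$ and hence $T[\mu_0,\delta_y]$ finite, is exactly what the paper does. Your use of the superlevel set $\{V_\mu>V_\mu(x_0)-\delta/4\}$ is in fact slightly more careful than the paper's phrasing of the lower semicontinuity step.

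The second step, however, contains a genuine gap in the proof that $K_1$ is relatively open. You deduce $V_\mu\equiv c^*$ on $B(y,\epsilon)\cap K$ from the two inequalities $V_\mu\ge c^*$ on $B(y,\epsilon)$ (first step applied at $y$) and $V_\mu\ge c^*$ on $K$ (definition of $c^*$). Two lower bounds of the same sign cannot produce equality, so this does not follow. Indeed, the qualitative conclusion of step 1 -- every point of $\supp(\mu)$ is a classical local minimum of $V_\mu$ -- is simply too weak to give local constancy on its own: the lower semicontinuous function $f$ equal to $0$ on $[0,1/2]$ and $1$ on $(1/2,1]$ has every point of $[0,1]$ as a local minimum, yet is not constant. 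What the openness step actually requires is a re-run of the perturbation construction rather than an invocation of its conclusion: if $K_1$ were not relatively open in $K$, there would exist $y\in K_1$ and, for every $\epsilon>0$, a point $z\in K\subset\supp(\mu)$ with $|z-y|<\epsilon$ and $V_\mu(z)>V_\mu(y)=c^*$. By lower semicontinuity one finds $\eta<\epsilon$ with $\inf_{B(z,\eta)}V_\mu>V_\mu(y)$, and since $z\in\supp(\mu)$ the ball $B(z,\eta)$ carries positive $\mu$-mass; transporting mass $t$ from $\mu|_{B(z,\eta)}$ to $\delta_y$ then gives a competitor within $d_\infty$-distance $\eta+\epsilon<2\epsilon$ of $\mu$ with strictly smaller energy (again finite thanks to $W(0)<\infty$), contradicting $d_\infty$-local minimality since $\epsilon$ was arbitrary. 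This is precisely how the paper closes the argument; the clopen framework you set up is the right scaffolding, but the interior of the openness proof has to be the perturbation argument, not the pointwise local-minimum property.
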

\begin{proof}
The proof of the first statement is similar to the proof of
Proposition \ref{constant}. We argue by contradiction: assume that
$\mu\in\prob(\R^N)$ is a local minimizer of $E[\cdot]$ and that
there exists $x_0 \in \supp (\mu)$ which is not a (classical)
local minimum of $V_\mu$. Fix $\epsilon>0$, then there exists
$x_a\in B(x_0,\epsilon)$ such that $V_{\mu}(x_a)< V_{\mu}(x_0)$.
But since $V_{\mu}$ is l.s.c. there exists $0<\eta<\epsilon$ such
that
  \begin{equation} \label{lsc}
  V_{\mu}(x_a)< V_{\mu}(x_0)\leq V_{\mu}(x) \quad \text{for all } x \in  B(x_0,\eta).
  \end{equation}
We then define $\mu_0$ and $\mu_1$ as in the proof of Proposition
\ref{constant}. The different idea now is to send mass from
$\mu_0$ to a Dirac Delta at the location $x_a$ instead of
distributing it evenly over a set $\tilde{A}$ of nonzero Lebesgue
measure: instead of letting $\mu_t= (m_0-t)\mu_0+t\mu_{\tilde
A}+m_1\mu_1$ as before, we now define $\mu_t=
(m_0-t)\mu_0+t\delta_{x_a}+m_1\mu_1$. The same expansion leads to
\begin{equation*}
 E[ \mu_t]= E[\mu - t \mu_0+t \delta_{x_a}]
= E[\mu]+  2t \Big( B[\delta_{x_a},\mu]-B[\mu_0,\mu] \Big)+ t^2
T[\mu_0,\delta_{x_a}]\,.
\end{equation*}
From \eqref{lsc} we obtain that the term
$B[\delta_{x_a},\mu]-B[\mu_0,\mu]$ is strictly negative. In order
to conclude the argument we need the term
$T[\mu_0,\delta_{x_a}]=E[\mu_0]- 2 B[\mu_0,\delta_{x_a}]+
E[\delta_{x_a}]$ to be finite. Note that $E[\delta_{x_a}]= W(0)/2$
therefore it is necessary for $W(0)$ to be finite in order to
conclude the proof.

We now prove the second statement. We follow classical arguments
from potential theory, see \cite[Proposition 0.4]{Papa} for
instance. Let $K$ be a connected compact set contained in
$\supp(\mu)$ and consider the sets $A=\{x\in K: V_{\mu}(x)>\inf_K
V_{\mu} \}$ and $B=\{x\in K: V_{\mu}(x)=\inf_K V_{\mu} \}$. Since
$V_{\mu}$ is l.s.c. the set $A$ is open relative to $K$. Let us
show that  $B$ is also open relative to $K$. We argue by
contradiction. Suppose there exists $x_a \in B$ such that for
every $\epsilon>0$ there exists $ x_{0,\epsilon}\in K$ with
$|x_a-x_{0,\epsilon}|< \epsilon$ and
$V_{\mu}(x_a)<V_{\mu}(x_{0,\epsilon})$.  Then following the exact
same steps as in the proof of the first statement we can construct
a probability measure with lower energy than $\mu$ and whose
$d_\infty$ distance to $\mu$ is smaller than $\epsilon$, therefore
leading to a contradiction and proving that $B$ is open relative
to $K$. Since $K$ is connected then either $A$ or $B$ must be
empty. But since $V_{\mu}$ is l.s.c it has to reach its minimum on
compact sets and therefore $A=\emptyset$ and $B=K$.
\end{proof}

\begin{remark}
Since $\supp(\mu)$ is closed,  the connected component of $\supp
(\mu)$ are also closed. So the second statement of Proposition
{\rm\ref{constant2}} implies that $V_{\mu}$ is constant on any
bounded connected component of $\supp (\mu)$.  In particular if
$\mu$ is compactly supported then  $V_{\mu}$ is constant on any
connected component of $\supp(\mu)$.
\end{remark}

Combining Lemma \ref{min-principle} and Proposition \ref{constant}
we obtain:
\begin{corollary}\label{posivityDelta}
Suppose that $W$ satisfies {\rm (H1')--(H4)}. If $\mu$ is local
minimizer of the interaction energy with respect to $d_\infty$ and
$E[\mu]<+\infty$, then $(\Delta^0 W*\mu)(x) \ge0$ for all  $x \in
\supp (\mu)$.
\end{corollary}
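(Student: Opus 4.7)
The statement is essentially a direct combination of the two preceding results, and my plan is to chain them together with minimal additional work.

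First I would invoke Proposition \ref{constant}. Since $W$ satisfies (H1')--(H3) (a subset of the hypotheses (H1')--(H4) assumed here) and $\mu$ is a local minimizer of $E_W$ with respect to $d_\infty$ satisfying $E[\mu]<+\infty$, the proposition guarantees that every $x_0 \in \supp(\mu)$ is a local minimizer of $V_\mu = W*\mu$ in the almost-everywhere sense, i.e., there exists $\epsilon_0 > 0$ such that $V_\mu(x_0) \le V_\mu(x)$ for a.e.\ $x \in B(x_0,\epsilon_0)$.

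Next I would apply Lemma \ref{min-principle}. The hypothesis (H4) is now available, so together with (H1')--(H3) we can feed the local-minimum property of $V_\mu$ at $x_0$ directly into the lemma, which yields $(\Delta^0 W * \mu)(x_0) \ge 0$. Since $x_0 \in \supp(\mu)$ was arbitrary, this establishes the conclusion for every point of the support.

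There is no substantive obstacle here: all the analytic work (the variational argument producing the a.e.\ local-minimum property of the potential, and the Fatou-type passage from the approximate Laplacian to $\Delta^0$) has already been absorbed into Proposition \ref{constant} and Lemma \ref{min-principle}, so the corollary amounts to verifying that the hypotheses needed by each result are provided by the hypotheses of the corollary. The only point worth flagging in the write-up is that Proposition \ref{constant} does not require (H4), while Lemma \ref{min-principle} does, so (H4) is what makes the second step legitimate.
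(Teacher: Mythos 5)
Your proposal is correct and is exactly the paper's argument: the paper simply states ``Combining Lemma \ref{min-principle} and Proposition \ref{constant} we obtain'' the corollary, which is precisely the chaining you describe, with the correct observation that (H4) is what licenses the second step.
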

We recall the following result from \cite[Theorem 4.13]{falconer}.

\begin{proposition}\label{falconer}
Let $A$ be a Borel subset of $\mathbb R^N$, and $s\geq 0$. If
there exists a probability measure $\mu\in \mathcal P(\mathbb
R^N)$ supported on $A$ such that
\begin{equation*}
 \iint_{\real^N\times\real^N} \frac{d\mu(x)\,d\mu(y)}{|x-y|^s}<\infty,
\end{equation*}
then $\text{dim}_H A\geq s$, with $\text{dim}_H$ being the
Hausdorff dimension of $A$.
\end{proposition}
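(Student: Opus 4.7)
The plan is to prove Proposition \ref{falconer} via the standard mass distribution argument underlying Frostman's lemma: extract from the finite $s$-energy a probability measure supported on $A$ whose mass on balls decays like $r^s$, and then use this bound to show that the Hausdorff $s$-measure of $A$ is positive. We may assume $s>0$, since the case $s=0$ is trivial.

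First, set $\phi_s(x):=\int_{\R^N}|x-y|^{-s}\,d\mu(y)$. The hypothesis asserts $\int\phi_s(x)\,d\mu(x)<\infty$, so $\phi_s$ is finite $\mu$-a.e. By Markov's inequality, for $M>0$ sufficiently large the set
$$
E:=\{x\in A:\phi_s(x)\leq M\}
$$
satisfies $\mu(E)>0$. Define the probability measure $\nu\in\prob(\R^N)$ by $\nu(B):=\mu(B\cap E)/\mu(E)$, which is supported in $E\subset A$.

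The central step is to derive the mass bound $\nu(B(x,r))\leq C\,r^s$ for all $x\in\R^N$ and $r>0$, where $C:=2^s M/\mu(E)$. If $B(x,r)\cap E=\emptyset$ the estimate is trivial; otherwise, pick any $y\in B(x,r)\cap E$. Then $B(x,r)\subset B(y,2r)$ and
$$
M\geq \phi_s(y)\geq \int_{B(y,2r)}|y-z|^{-s}\,d\mu(z)\geq (2r)^{-s}\,\mu(B(y,2r)),
$$
so $\mu(B(x,r))\leq \mu(B(y,2r))\leq M(2r)^s$, and dividing by $\mu(E)$ yields the claim.

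Finally, given any countable cover $\{U_i\}$ of $A$ by sets of diameter at most $\delta$, each $U_i$ is contained in a ball of radius $\mathrm{diam}(U_i)$, so the mass bound gives $\nu(U_i)\leq C\,\mathrm{diam}(U_i)^s$. Summing,
$$
1=\nu(A)\leq\sum_i \nu(U_i)\leq C\sum_i \mathrm{diam}(U_i)^s,
$$
so the Hausdorff $s$-content satisfies $\mathcal H^s_\delta(A)\geq 1/C>0$ for every $\delta>0$; letting $\delta\to 0$ gives $\mathcal H^s(A)\geq 1/C>0$, hence $\mathrm{dim}_H A\geq s$. The only subtle point is the passage from the $L^1$-bound on $\phi_s$ to a pointwise bound on a set of positive $\mu$-measure via Markov's inequality; the remaining work is a routine geometric covering estimate.
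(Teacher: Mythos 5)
Your proof is correct, but note there is nothing in the paper to compare it against: the authors do not prove this proposition, they quote it directly from Falconer's book (Theorem 4.13). Your argument is the standard self-contained one: Chebyshev/Markov applied to $\phi_s=\int |x-y|^{-s}d\mu(y)$ to extract a set $E\subset A$ of positive $\mu$-measure on which the potential is bounded by $M$, the resulting Frostman-type estimate $\nu(B(x,r))\leq C r^s$ for the normalized restriction $\nu$, and the mass distribution principle applied to an arbitrary $\delta$-cover to get $\mathcal H^s(A)\geq 1/C>0$, hence $\dim_H A\geq s$. All steps check out: finiteness of the double integral forces $\mu(A)=1$ and rules out atoms in $E$ (any atom has $\phi_s=+\infty$), the energy is strictly positive so the threshold $M$ can be chosen with $\mu(E)>0$, and the geometric estimates $B(x,r)\subset B(y,2r)$ and $U_i\subset B(x_i,\mathrm{diam}\,U_i)$ are fine. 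For comparison, Falconer's own proof takes a slightly different route: instead of truncating where the potential is bounded, he shows that finite $s$-energy forces $\limsup_{r\to 0}\mu(B(x,r))/r^s=0$ for $\mu$-a.e.\ $x$ and then invokes his density-based Proposition 4.9, which yields the stronger conclusion $\mathcal H^s(A)=+\infty$; your version only gives $\mathcal H^s(A)>0$, which is all that the dimension bound requires.
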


We are now ready to prove the main theorem.

\begin{proof}[Proof of Theorem \ref{dimension}]
Let $\rho$ be a nonzero part of $\mu$, that is $\mu= \rho+\nu$ for
some  nonnegative measure $\nu$. Let $A=\supp (\rho)$ and let us show
that $\text{dim}_H A\geq \beta$. Choose $\epsilon$ small enough so
that \eqref{def:rep1} holds, choose $x_0\in A$ and define the
measure
$$
 \mu_0(B)= \rho(B \cap B(x_0, \epsilon/2)).
$$
Clearly $\mu$ can be written $\mu= \mu_0+\mu_1 $, where $\mu_0$
and $\mu_1$ are two (nonnegative) measures of mass $m_0>0$ and
$m_1\ge 0$ and where $\mu_0$ is supported in $A\cap B(x_0,
\epsilon/2)$. Then from \eqref{def:rep1} we get:
 \begin{align*}
{C} \iint_{\real^N\times\real^N} & \frac{d\mu_0(x)\,d\mu_0(y)}{|x-y|^{\beta}}  \le   \iint_{\real^N\times\real^N} - \Delta^0 W(x-y) d \mu_0(x) d \mu_0(y) \nonumber\\
& =   \iint_{\real^N\times\real^N} - \Delta^0 W(x-y) d \mu(x) d \mu_0(y)  -    \iint_{\real^N\times\real^N} -\Delta^0 W(x-y) d \mu_1(x) d \mu_0(y) \nonumber\\
      & =  -   \int_{\real^N\times\real^N} ( \Delta^0 W*\mu)(y) d \mu_0(y)   +    \iint_{\real^N\times\real^N}  \Delta^0 W(x-y) d \mu_1(x) d \mu_0(y)\\
       & \le      \iint_{\real^N\times\real^N} \Delta^0 W(x-y) d \mu_1(x) d \mu_0(y) \le    C^* m_1 m_0 <+ \infty.
 \end{align*}
We have used the fact that $\Delta^0 W*\mu$ is nonnegative on the
support of $\mu$ from Corollary \ref{posivityDelta} and that
$\Delta^0 W(x)< C^*$ by hypothesis (H4). We then apply Proposition
\ref{falconer} to the probability measure $\mu_0/m_0$, which is
supported on $A$, to obtain $\text{dim}_H A\geq \beta$.
\end{proof}


\subsection{Example of potentials satisfying the hypotheses of Theorem \ref{dimension}}
In this subsection we  consider the class of potentials
$$
W_\alpha(x)=c \, h_\alpha(x)+ \psi(x)
$$
where $\psi \in C^3(\real^N)$ bounded from below, $c>0$ and
$h_\alpha: \real^N \to (-\infty, \infty]$ is the power-law
function:
$$
h_\alpha(x)= - |x|^\alpha /\alpha
$$
for $x\neq 0$ and $\alpha\in\real$ with the convention
$h_0(x)=-\log{|x|}$. We define $h_\alpha(0) = 0$ if $\alpha > 0$
or $h_\alpha(0) =+\infty$ if $\alpha \leq 0$. The potentials
$W_\alpha$ are typical examples of repulsive-attractive potentials
behaving like $-|x|^\alpha/\alpha$ around the origin. It is
trivial to check that $W_\alpha$ satisfies (H1)-(H2)-(H3-loc) for
any $\alpha>-N$ (in the case $\alpha\geq 0$ the function $\psi$
need to grow fast enough at infinity for hypothesis (H1) to hold).
Note also that for $x \neq 0$ we have
\begin{equation} \label{beta-rep}
-\Delta W_\alpha(x)=c\frac{(\alpha+N-2)}{ |x|^{2-\alpha}}  - \Delta \psi(x)
\end{equation}
and therefore  if $\alpha+N-2 > 0$ then $W_\alpha$ satisfies
\eqref{def:rep1} from the definition of $\beta$-repulsivity with
$\beta=2-\alpha$. The goal of this subsection is to show that
$W_\alpha$ also satisfies \eqref{def:rep2} and (H4-loc).

We start by checking \eqref{def:rep2}. An explicit computation gives
\begin{equation*}
-\Delta^\epsilon h_\alpha(0)=   \frac{2(N+2)}{ \epsilon^2} \left( h_\alpha(0)- \dashint_{B(0,\epsilon)}  h_\alpha(y) dy  \right) =
\begin{cases}  2(N+2)  \frac{N}{N+\alpha} \frac{\epsilon^{\alpha-2}}{\alpha}  & \text{ if } \alpha>0\\
+\infty & \text{ if } 2-N\le\alpha \le 0
\end{cases}
\end{equation*}
where we have used the fact that  $h_\alpha(0)=0$ for $\alpha>0$
and $h_\alpha(0)=+\infty$  for $\alpha\le 0$. Letting $\epsilon
\to 0$ and using the fact that $\Delta \psi(0)$ is finite we
obtain
\begin{equation} \label{beta-rep2}
-\Delta^0 W(0)=+\infty \quad \text{for all } \alpha<2.
\end{equation}
Combining \eqref{beta-rep} and \eqref{beta-rep2} we see that for
$2-N < \alpha<2$ the potential $W_\alpha$ is  $\beta$-repulsive
with $\beta=2-\alpha \in (0,N)$.

We now show that for $\alpha > 2-N$ the potentials $W_\alpha$
satisfies hypothesis (H4-loc). The key point is that the functions
$h_\alpha$ are superharmonic for $ \alpha > 2-N $.  Let us recall
the definition of superharmonicity:
\begin{definition}
A lower semicontinuous function $h: \real^N \to (-\infty, \infty]$
is said to be superharmonic on  the connected open set $\Omega$ if
it is not identically equal to $+\infty$ on $\Omega$ and if
$$
h(x)\ge \dashint_{B(x,r)} h(y) dy
$$
for all $x\in \Omega$ and $r>0$ such that $B(x,r) \subset \Omega$.
\end{definition}

We also recall that if $h \in C^2(\Omega)$, then $h$ is
superharmonic on $\Omega$ if and only if $\Delta h(x) \le 0$ for
all $x \in \Omega$. To see that the functions $h_\alpha$ are
superharmonic for  $ \alpha > 2-N $, first note that for $x \neq
0$
$$
\Delta h_\alpha (x)=-\frac{(\alpha+N-2)}{ |x|^{2-\alpha}} \le 0.
$$
Therefore $h_\alpha$ is superharmonic on $\real^N \backslash
\{0\}$ and it can be easily checked that it satisfies the
super-mean value property at the origin \cite[Definition
2.1]{Papa}. Both together imply that it is actually superharmonic
on the full space $\real^N$, \cite[Corollary 2.1]{Papa}. As a
consequence we directly obtain from the definition of the
approximate Laplacian that $\Delta^\epsilon h_\alpha(x) \le 0$ and
therefore
$$
\Delta^\epsilon W_\alpha = c\Delta^\epsilon h_\alpha + \Delta^\epsilon \psi \le  \Delta^\epsilon \psi.
$$
To conclude we note that since $\psi \in C^3(\real^N)$ a simple
Taylor expansion shows that $\Delta^\epsilon \psi$ converges
uniformly to $\Delta \psi$ on compact sets. Indeed, the expansion
gives
\begin{align}
\Delta^\epsilon \psi(x) &= \frac{2(N+2)}{\epsilon^2}\dashint_{B(0,\epsilon)} \psi(x+y)-\psi(x) \;  dy\nonumber\\
&= \frac{2(N+2)}{\epsilon^2} \dashint_{B(0,\epsilon)} y^T\nabla\psi(x)+ y^T H\psi(x) y + O(\epsilon^3) \;  dy \label{bouba1}\\
&= \frac{2(N+2)}{\epsilon^2}\left(  \Delta \psi(x) \dashint_{B(0,\epsilon)}  y_1^2 dy + O(\epsilon^3) \right) \label{bouba2}\\
&=  \Delta \psi(x) + O(\epsilon)\nonumber
\end{align}
To go from \eqref{bouba1} to \eqref{bouba2} we have used the fact
that most of the terms in the Taylor expansion are equal to zero
after integrating on spheres due to symmetry. The only remaining
terms are the ones involved in the Laplacian. Note that since the
partial derivative of order 3 of $\psi$ are bounded on compact
subsets of $\real^N$, then the error term is uniform for $x$ in
compact sets. Since $\Delta \psi$ is bounded on compact sets, we
conclude that for $\alpha > 2-N$ the potential $W_\alpha$
satisfies (H4-loc).

Moreover, let us point out that if $\psi$ is well behaved at
infinity in terms of regularity and growth, then $W_\alpha$
satisfies either (H1)--(H4) or (H1)-(H2)-(H3-loc)-(H4-loc). We
summarize this discussion in the following proposition:

\begin{proposition}
If  $2-N < \alpha <2 $ and if $\psi \in C^3(\real^N)$ then
$W_\alpha$ is $(2-\alpha)$-repulsive around the origin and
satisfies {\rm (H4-loc)}.
\end{proposition}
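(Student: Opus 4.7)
The statement essentially summarizes the computations laid out in the preceding discussion, so the proof is a matter of gathering them. My plan is to verify the three requirements in turn: the estimate \eqref{def:rep1}, the blow-up condition \eqref{def:rep2}, and hypothesis (H4-loc).

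First, for \eqref{def:rep1}, I would start from the identity
\[
-\Delta W_\alpha(x) = \frac{c(\alpha+N-2)}{|x|^{2-\alpha}} - \Delta \psi(x)
\]
valid for $x\neq 0$, which was already recorded as \eqref{beta-rep}. Since $\alpha + N - 2 > 0$ and $\Delta \psi$ is continuous (hence bounded) on a neighborhood of the origin, for $\epsilon>0$ sufficiently small the singular term dominates and yields $-\Delta W_\alpha(x) \ge \tfrac{c(\alpha+N-2)}{2|x|^{2-\alpha}}$ for all $0<|x|<\epsilon$. Because $W_\alpha$ is $C^2$ away from the origin, $-\Delta^0 W_\alpha(x)$ agrees with $-\Delta W_\alpha(x)$ there, giving \eqref{def:rep1} with $\beta = 2-\alpha$.

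Second, for \eqref{def:rep2} at the origin, I would use the explicit computation of $-\Delta^\epsilon h_\alpha(0)$ recalled in the excerpt. When $\alpha\le 0$ the value is $+\infty$ for every $\epsilon$; when $0<\alpha<2$ one has
\[
-\Delta^\epsilon h_\alpha(0) = 2(N+2)\frac{N}{N+\alpha}\frac{\epsilon^{\alpha-2}}{\alpha} \longrightarrow +\infty \quad \text{as } \epsilon \to 0.
\]
Since $\psi$ is $C^3$, $\Delta^\epsilon \psi(0)$ stays bounded, so $-\Delta^0 W_\alpha(0) = +\infty$, giving \eqref{def:rep2}.

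Third, for (H4-loc), the key is to show $\Delta^\epsilon h_\alpha(x) \le 0$ for every $x$ and $\epsilon\in(0,1)$. Away from the origin we have $\Delta h_\alpha \le 0$, so $h_\alpha$ is superharmonic on $\real^N\setminus\{0\}$; combined with the super-mean-value property at $0$ (recorded in the excerpt via \cite[Definition~2.1 and Corollary~2.1]{Papa}), this yields superharmonicity on all of $\real^N$, hence $\Delta^\epsilon h_\alpha \le 0$ globally. Consequently $\Delta^\epsilon W_\alpha \le \Delta^\epsilon \psi$. A Taylor expansion of $\psi \in C^3$ (as displayed in \eqref{bouba1}--\eqref{bouba2}) shows $\Delta^\epsilon \psi \to \Delta \psi$ uniformly on any compact set $K$, so $\Delta^\epsilon \psi$ is bounded above on $K$ uniformly in $\epsilon\in(0,1)$; this yields (H4-loc).

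The only genuinely delicate step is the extension of superharmonicity across the origin; once one invokes the super-mean-value property for $h_\alpha$ at $0$ (which is clear from the explicit average computation), the remaining verifications are direct. Together these three steps establish that $W_\alpha$ is $(2-\alpha)$-repulsive and satisfies (H4-loc) for $2-N < \alpha < 2$ and $\psi \in C^3(\real^N)$.
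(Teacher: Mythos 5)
Your proposal is correct and follows essentially the same route as the paper: verifying \eqref{def:rep1} via \eqref{beta-rep}, verifying \eqref{def:rep2} via the explicit computation of $-\Delta^\epsilon h_\alpha(0)$, and verifying (H4-loc) via the superharmonicity of $h_\alpha$ together with the Taylor expansion showing $\Delta^\epsilon\psi\to\Delta\psi$ uniformly on compacts. The paper presents the proposition as a summary of the preceding discussion rather than in a formal proof environment, and your write-up simply organizes that discussion into a proof.
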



\section{Mild Repulsion implies 0-dimensionality}

In this section, we will show that if the potential is mildly
repulsive, meaning that it behaves locally near zero like
$-|x|^\alpha$ with $\alpha>2$, then the support of the measure
cannot contain measures concentrated on smooth manifolds of any
dimension except 0-dimensional sets.

\begin{definition}\label{kdimensional}
Let $1 \le k \le N$. A probability measure $\mu \in
\prob(\real^N)$ is said to have a regular $k$-dimensional
part if it can be written
$$
\mu= \mu_1+\mu_2
$$
where $\mu_1$ is a nonnegative measure on $\real^N$ and $\mu_2$ is
defined by
$$
\int_{\real^N} \psi(x) d\mu_2(x)= \int_{\mathcal M} \psi(x)f(x) d
\sigma(x)  \quad \forall \psi \in C_0(\real^N)
$$
for some $C^2$ manifold $\mathcal M$ of dimension $k$ and a non
identically zero nonnegative function $f: \mathcal M \to
(0,+\infty]$ integrable with respect to the surface measure
$d\sigma(x)$ on $\mathcal M$. Moreover to avoid pathological
situations, we assume that there exists $x_0 \in \M$, $c,\kappa>0$
such that
\begin{equation}\label{positivity}
 f(x)\geq c \qquad \forall x\in {\mathcal M} \cap B(x_0,\kappa).
\end{equation}
\end{definition}

We now state the main result of this section:
\begin{theorem} \label{thm1}
Let $W \in C^2(\real^N)$ be a radially symmetric potential which
is equal to $-|x|^\alpha/\alpha$ in a neighborhood of the origin.
If $\alpha>2$ then a local minimizer of the interaction energy
with respect to $d_\infty$ cannot have a $k$-dimensional component
for any $1\le k\le N$.
\end{theorem}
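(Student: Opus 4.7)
The plan is to argue by contradiction, using Proposition~\ref{constant2} as the starting point: since $W$ is continuous at the origin with $W(0)=0$, any local minimizer $\mu$ satisfies $V_\mu\equiv V_0$ on each connected compact subset of $\supp(\mu)$. Suppose $\mu=\mu_1+\mu_2$ with $\mu_2$ the $k$-dimensional component on the $C^2$ manifold $\mathcal M$, and fix $x_0\in\mathcal M$, $c>0$, $\kappa>0$ from~\eqref{positivity}. Shrinking $\kappa$ we may assume $\mathcal M\cap\overline{B(x_0,\kappa)}$ is a connected $C^2$ graph over $T_{x_0}\mathcal M$; by~\eqref{positivity} it lies in $\supp(\mu)$, so Proposition~\ref{constant2} produces $V_0\in\R$ with $V_\mu\equiv V_0$ on this set.

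The perturbation I would use collapses a slice of $\mu_2$ of constant density down to a Dirac at $x_0$. For $r<\kappa/2$ small enough that $W(z)=-|z|^\alpha/\alpha$ on $B(0,2r)$, set $\rho_r:=c\,\mathds{1}_{\mathcal M\cap B(x_0,r)}\,d\sigma$, a part of $\mu$ because $f\geq c$ on $\mathcal M\cap B(x_0,\kappa)$, of total mass $m_r=c\,\omega_k r^k(1+O(r))$. Define $\tilde\mu:=\mu-\rho_r+m_r\delta_{x_0}$; since both $\rho_r$ and $m_r\delta_{x_0}$ sit in $B(x_0,r)$, Lemma~\ref{tech} gives $d_\infty(\mu,\tilde\mu)\leq 2r$, which is arbitrarily small.

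Expanding $E[\tilde\mu]$ by bilinearity of $B$ as in the proof of Proposition~\ref{constant},
\[
E[\tilde\mu]-E[\mu]=\Big(m_rV_\mu(x_0)-\int V_\mu\,d\rho_r\Big)+E[\rho_r]-m_r\!\int W(x-x_0)\,d\rho_r(x)+\tfrac{m_r^2}{2}W(0).
\]
The first bracket vanishes because $V_\mu\equiv V_0$ on $\supp(\rho_r)$, and the last term vanishes since $W(0)=0$. Substituting $W(z)=-|z|^\alpha/\alpha$ and parametrizing $\mathcal M$ near $x_0$ by a $C^2$ chart $\Phi:B_k(0,r')\to\mathcal M$ with $\Phi(0)=x_0$ and $d\Phi(0)$ the tangent inclusion, a routine Taylor estimate yields $|\Phi(\xi)-x_0|^\alpha=|\xi|^\alpha(1+O(r))$ and $|\Phi(\xi)-\Phi(\eta)|^\alpha=|\xi-\eta|^\alpha(1+O(r))$ uniformly for $|\xi|,|\eta|\leq r'$, with the chart's Jacobian tending to $1$. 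The leading-order sign of $E[\tilde\mu]-E[\mu]$ is therefore the sign of
\[
2\,\mathbb{E}[\,|\xi|^\alpha\,]-\mathbb{E}[\,|\xi-\eta|^\alpha\,],\qquad\xi,\eta\text{ iid uniform on }B_k(0,1).
\]

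The crux of the argument, and the step where the hypothesis $\alpha>2$ enters, is then the strict inequality $\mathbb{E}[|\xi-\eta|^\alpha]>2\,\mathbb{E}[|\xi|^\alpha]$; the two sides coincide at $\alpha=2$ by the parallelogram law, which is precisely why the theorem's threshold sits there. I would deduce it from the sharpened parallelogram inequality in Euclidean space,
\[
|a+b|^\alpha+|a-b|^\alpha\geq 2(|a|^\alpha+|b|^\alpha)\qquad\text{for }\alpha\geq 2,
\]
itself obtained by combining Jensen's inequality for the convex function $t\mapsto t^{\alpha/2}$ with the superadditivity $(x+y)^{\alpha/2}\geq x^{\alpha/2}+y^{\alpha/2}$; both ingredients are strict for $\alpha>2$ off Lebesgue-null sets. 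The $\xi\mapsto-\xi$ symmetry of $B_k(0,1)$ makes $|\xi+\eta|$ and $|\xi-\eta|$ equidistributed for iid uniform $\xi,\eta$, so taking expectations of the sharpened parallelogram inequality collapses to $2\,\mathbb{E}[|\xi-\eta|^\alpha]\geq 4\,\mathbb{E}[|\xi|^\alpha]$, with strict inequality for $\alpha>2$. This yields $E[\tilde\mu]<E[\mu]$ for all sufficiently small $r$, contradicting the local minimality of $\mu$ in the $d_\infty$ topology.
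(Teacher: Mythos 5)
Your proposal is correct and is essentially the same proof as the paper's, differing only in packaging. Both rest on the same two pillars: Proposition~\ref{constant2} (constancy of $V_\mu$ on a connected piece of the support, so that collapsing a small part of $\mu$ supported there to a Dirac changes the energy by a self-interaction difference of the form $T[\delta_{x_0},\rho]$), and a parallelogram-type convexity estimate that makes this difference strictly negative to leading order in the radius. Your ``sharpened parallelogram inequality'' $|a+b|^\alpha+|a-b|^\alpha\geq 2(|a|^\alpha+|b|^\alpha)$, derived from Jensen for $t\mapsto t^{\alpha/2}$ plus superadditivity of $t\mapsto t^{\alpha/2}$, is precisely what the paper encodes via $\lambda$-convexity (for $\alpha\in(2,4]$) and $\phi$-uniform convexity (for $\alpha>4$) of $h(t)=t^{\alpha/2}/\alpha$ in Lemma~\ref{convexity}; your version has the small advantage of a single uniform argument for all $\alpha>2$ without the case split. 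Where the paper separates the flat-disk estimate (Lemma~\ref{convexity}) from the curvature correction (Lemma~\ref{errorestimate}, estimated via $\|\nabla g\|_{L^\infty(D_\varepsilon^k)}\to 0$), you merge these into a single Taylor computation with $(1+O(r))$ relative errors on $|\Phi(\xi)-x_0|^\alpha$, $|\Phi(\xi)-\Phi(\eta)|^\alpha$, and the Jacobian; this is a legitimate shortcut and in fact (since $\nabla g(0)=0$ and $g\in C^2$) the errors are $O(r^2)$, which more than suffices to keep the negative $-cr^\alpha$ leading term dominant. You also take $\rho_r$ to be the constant-density truncation of $\mu_2$ on $\mathcal M\cap B(x_0,r)$ rather than the paper's push-forward $P_g\#\nu_{\varepsilon,k}$ of a flat disk; both are valid parts of $\mu$ and the resulting estimates match. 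In short: correct, same mechanism, cleaner convexity lemma, less modular presentation.
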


For the above theorem to be true it is not necessary for the
potential to be exactly equal to a power law $-|x|^\alpha$,
$\alpha>2$, around the origin. It is enough for the potential to
behaves like $-|x|^\alpha$, $\alpha>2$, at the origin in a precise
convexity sense -- see Theorem \ref{general-thm}.

\subsection{Preliminaries on convexity}

To prove Theorem \ref{thm1}, we need
some convex analysis concepts, see
\cite{Carrillo-McCann-Villani06,Ambrosio2008} and the references
therein. The term {\em modulus of convexity} refers to any convex
function $\phi$ on the positive reals satisfying
\begin{eqnarray*}
(\phi_0)&& \phi:[0,\infty) \longrightarrow \real\
        \mbox{is continuous and vanishes only at $\phi(0)=0$.} \\
(\phi_1)&& {\phi(x) \geq -kx {\rm\ for\ some\ } k<\infty.}
                                           \label{suplinear_cheat}
\end{eqnarray*}
Now, we can quantify the convexity of certain functions in terms
of a modulus of convexity.

\begin{definition} A function $h:[0,+\infty) \to \real$ is $\phi-$uniformly convex on
$(a,b)$ if there exists a modulus of convexity $\phi$ such that
 \begin{equation} \label{phi-convexity}
 h\left(\frac{r_1+r_2}{2}\right)\le \frac 12\left( h(r_1) +h(r_2)\right) - \frac14\int_0^{|r_1-r_2|}\phi(t)\,dt,
 \end{equation}
for all $r_1,r_2 \in (a,b)$.

A function $h:[0,+\infty) \to \real$ is $\lambda-$convex on
$(a,b)$ if it is $\phi$-uniformly convex with $\phi(s)=\lambda s$
and $\lambda\in\real$.
\end{definition}

Note that if $h$ is $\lambda-$convex, then \eqref{phi-convexity}
reads
\begin{equation} \label{lambda-convexity}
 h\left(\frac{r_1+r_2}{2}\right)\le \frac12 h(r_1) + \frac12 h(r_2) - \frac{\lambda}{8} (r_1-r_2)^2,
\end{equation}
for all $r_1,r_2 \in (a,b)$. It is equivalent to assume that the
function $h(r)-\tfrac{\lambda}{2} r^2$ is convex on $(a,b)$. The
following proposition can be easily proven:
\begin{proposition}[Convexity properties of power laws] \label{robin}

\

\begin{enumerate}
 \item[(i)] If $q\in (1,2]$, then
$h(r)={r^q}$ is $\lambda-$convex on $[0,R]$ for
$\lambda=\inf_{(0,R)} h''=q(q-1)R^{q-2}>0$, and thus, uniformly
convex on $[0,R]$.

\item[(ii)] If $q> 2$, then $h(r)={r^q}$ is $\phi-$uniformly
convex on $\mathbb R_+$, with $\phi(t)=2^{2-q}t^{q-1}/q$.  That is
\begin{equation}\label{q-convexity}
 h\left(\frac{r_1+r_2}{2}\right) \le \frac 12\left( h(r_1) +h(r_2)\right) -2^{-q}|r_1-r_2|^q,
\end{equation}
for $r_1,\,r_2\geq 0$.
\end{enumerate}
\end{proposition}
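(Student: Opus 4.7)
The plan is to prove (i) and (ii) separately, each reducing to elementary one-variable calculus. I treat (i) by the classical equivalence between $\lambda$-convexity and ordinary convexity of a quadratic perturbation, and (ii) by a symmetrizing change of variables that turns \eqref{q-convexity} into a one-parameter monotonicity assertion.

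For (i), I would compute $h''(r)=q(q-1)r^{q-2}$. Since $q\in(1,2]$ makes $q-2\le 0$, the function $r\mapsto r^{q-2}$ is non-increasing on $(0,R]$, so $\inf_{(0,R)}h''=q(q-1)R^{q-2}=:\lambda>0$, as asserted. Consequently $r\mapsto h(r)-\tfrac{\lambda}{2}r^2$ has nonnegative second derivative on $(0,R)$ and, by continuity up to the endpoints, is convex on $[0,R]$. Writing midpoint convexity for $h(r)-\tfrac{\lambda}{2}r^2$ and invoking the purely algebraic identity
\begin{equation*}
\tfrac{1}{2}\bigl(\tfrac{r_1+r_2}{2}\bigr)^2-\tfrac{1}{4}(r_1^2+r_2^2)=-\tfrac{1}{8}(r_1-r_2)^2
\end{equation*}
produces exactly \eqref{lambda-convexity}. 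The function $\phi(s)=\lambda s$ trivially satisfies $(\phi_0)$ and $(\phi_1)$, and the integral $\tfrac14\int_0^{|r_1-r_2|}\lambda t\,dt=\tfrac{\lambda}{8}(r_1-r_2)^2$ matches the right-hand side of \eqref{lambda-convexity}, so \eqref{phi-convexity} holds for this $\phi$, giving uniform convexity.

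For (ii), the substantive step is the inequality \eqref{q-convexity}. Assuming without loss of generality $r_1\ge r_2\ge 0$, I would set $s=(r_1+r_2)/2$ and $d=(r_1-r_2)/2$, so that $s\ge d\ge 0$ and $\{r_1,r_2\}=\{s+d,s-d\}$. Using $|r_1-r_2|^q=(2d)^q$, the inequality \eqref{q-convexity} collapses to the symmetric claim
\begin{equation*}
s^q+d^q\;\le\;\tfrac12\bigl((s+d)^q+(s-d)^q\bigr),\qquad s\ge d\ge 0.
\end{equation*}
I would prove this by fixing $d\ge 0$ and analysing
\begin{equation*}
F(s):=\tfrac12\bigl((s+d)^q+(s-d)^q\bigr)-s^q-d^q\qquad\text{on }[d,\infty).
\end{equation*}
At the left endpoint, $F(d)=2^{q-1}d^q-2d^q=(2^{q-1}-2)d^q\ge 0$ since $q\ge 2$. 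Differentiating gives $F'(s)=\tfrac{q}{2}\bigl((s+d)^{q-1}+(s-d)^{q-1}\bigr)-qs^{q-1}\ge 0$ by midpoint convexity of $x\mapsto x^{q-1}$ on $[0,\infty)$, a convex map since $q-1\ge 1$. Hence $F$ is nondecreasing on $[d,\infty)$ and $F(s)\ge F(d)\ge 0$, which is \eqref{q-convexity}.

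No step here is a real obstacle: the entire proposition is an elementary convex-analysis fact about power functions. The only ``judgmental'' move is the substitution $(s,d)$ in (ii), which is exactly what reduces a two-variable inequality to a one-variable monotonicity argument driven by convexity of $x^{q-1}$. Reconciling \eqref{q-convexity} with a specific modulus of convexity of the form $\phi(t)\propto t^{q-1}$ is then a direct integral bookkeeping computation.
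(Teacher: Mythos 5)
Your proof is correct, and since the paper simply asserts the proposition ``can be easily proven'' and supplies no argument, there is nothing to compare against: your calculus for (i) (computing $\inf h''$, passing to the convexity of $h-\tfrac{\lambda}{2}r^2$, and the algebraic midpoint identity) and your symmetrizing substitution $s=(r_1+r_2)/2$, $d=(r_1-r_2)/2$ for (ii), reducing \eqref{q-convexity} to the monotonicity of $F(s)=\tfrac12\bigl((s+d)^q+(s-d)^q\bigr)-s^q-d^q$ via convexity of $x\mapsto x^{q-1}$ together with the endpoint value $F(d)=(2^{q-1}-2)d^q\ge0$, are exactly the right elementary steps. One small caveat worth recording: if you carry out the ``integral bookkeeping'' for (ii) you will find that the modulus producing \eqref{q-convexity} exactly is $\phi(t)=q\,2^{2-q}t^{q-1}$ rather than the paper's $2^{2-q}t^{q-1}/q$ (indeed $\tfrac14\int_0^{|r_1-r_2|}q\,2^{2-q}t^{q-1}\,dt=2^{-q}|r_1-r_2|^q$); since you prove the stronger inequality \eqref{q-convexity}, the $\phi$-uniform convexity with the paper's smaller $\phi$ follows a fortiori, so your argument establishes the proposition in any case.
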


\subsection{Proof of Theorem \ref{thm1}}
In this subsection we prove the following generalization of
Theorem~\ref{thm1}.

\begin{theorem} \label{general-thm}
Let $W(x)= w(|x|)$ be continuously differentiable, bounded from
below, and decreasing as a function of $|x|$ in a neighborhood of
the origin. Assume moreover that $W$ behaves like the power law
$-|x|^\alpha$, $\alpha>2$, near the origin, in the sense that for
some $C^\ast>0$ and $R>0$ small enough,  $W(r)=-h(r^2)$ satisfies:
\begin{itemize}
\item if $\alpha\in (2,4]$, $h$ is $\lambda-$convex on $[0,R]$
with $\lambda=C^\ast R^{\alpha/2-2}$.

\item if $\alpha\in (4,\infty)$, $h$ is $\phi-$uniformly convex on
$[0,R]$, with $\phi(t)=C^\ast t^{\alpha/2-1}$,
\end{itemize}
and $C^\ast|w'(r)|\leq r^{\alpha-1}$ on $[0,R]$. Then a local
minimizer of the interaction energy with respect to $d_\infty$
cannot have a $k$-dimensional part for any $1\le k\le N$.
\end{theorem}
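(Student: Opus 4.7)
The proof proceeds by contradiction: assume $\mu$ is a local $d_\infty$-minimizer with a $k$-dimensional regular part for some $k \ge 1$, so $\mu = \mu_1 + \mu_2$ with $\mu_2$ supported on a $C^2$ manifold $\M$ and $f \ge c > 0$ on $\M \cap B(x_0,\kappa)$. Since $W$ is continuously differentiable, $W(0)$ is finite, so Proposition~\ref{constant2} gives that $V_\mu = W*\mu$ is constant on every connected compact subset of $\supp(\mu)$. Shrinking $\kappa$ if needed, I may assume $\M \cap \overline{B(x_0,\kappa)}$ is such a subset and $V_\mu \equiv V_0$ on it.

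I would introduce, for $0 < \epsilon < \kappa$, the ``floor'' measure $\nu := c\, \mathbf{1}_{\M \cap B(x_0,\epsilon)}\, d\sigma$, which satisfies $\nu \le \mu_2 \le \mu$ by hypothesis, set $m_\epsilon := \nu(\R^N)$, and consider the curve of probability measures
\begin{equation*}
\mu_t := \mu - t\nu + t\, m_\epsilon\, \delta_{x_0},\qquad t \in [0,1].
\end{equation*}
This remains a probability measure and $d_\infty(\mu,\mu_t) \le 2\epsilon$ by Lemma~\ref{tech}. Bilinear expansion yields $E_W[\mu_t] = E_W[\mu] + t \int V_\mu\, d\sigma + t^2 E_W[\sigma]$ where $\sigma = m_\epsilon \delta_{x_0} - \nu$. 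Because $V_\mu \equiv V_0$ on $\supp(\nu) \cup \{x_0\} \subset \M \cap B(x_0,\kappa)$ and $\sigma(\R^N) = 0$, the first-order term vanishes exactly, so it suffices to prove $E_W[\sigma] < 0$ for some sufficiently small $\epsilon$.

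Setting $\xi := y - x_0$, $\xi' := y' - x_0$ and writing $W(r) = -h(r^2)$, a direct calculation gives
\begin{equation*}
-2\, E_W[\sigma] = \iint \bigl[h(|\xi-\xi'|^2) + h(0) - h(|\xi|^2) - h(|\xi'|^2)\bigr]\, d\tilde\nu(\xi)\, d\tilde\nu(\xi'),
\end{equation*}
with $\tilde\nu$ the pushforward of $\nu$ by $y \mapsto y-x_0$. The plan is to combine the $\phi$-uniform convexity of $h$ applied to the pair $(|\xi-\xi'|^2, |\xi+\xi'|^2)$ --- whose mean is $|\xi|^2+|\xi'|^2$ and whose absolute difference equals $4|\xi\cdot\xi'|$ --- with the convex super-additivity $h(|\xi|^2+|\xi'|^2) \ge h(|\xi|^2)+h(|\xi'|^2)-h(0)$. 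Integrating against $\tilde\nu \otimes \tilde\nu$ and using that $\tilde\nu$ is, to leading order in $\epsilon$, invariant under $\xi \mapsto -\xi$ (so that $\iint h(|\xi+\xi'|^2)\, d\tilde\nu d\tilde\nu$ and $\iint h(|\xi-\xi'|^2)\, d\tilde\nu d\tilde\nu$ coincide up to a curvature correction), these yield
\begin{equation*}
-2\, E_W[\sigma] \ge \tfrac{1}{4}\iint \int_0^{4|\xi\cdot\xi'|}\phi(t)\,dt\, d\tilde\nu(\xi)\,d\tilde\nu(\xi').
\end{equation*}
The right-hand side is strictly positive because $k \ge 1$ makes $T_{x_0}\M$ nontrivial, so $\xi\cdot\xi'$ does not vanish on a set of full $\tilde\nu\otimes\tilde\nu$-measure.

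The main obstacle is the quantitative matching of scales in this last step: the second fundamental form of $\M$ breaks the exact $\xi\mapsto -\xi$ symmetry of $\tilde\nu$, so one must verify that the curvature-induced asymmetry is of strictly smaller order in $\epsilon$ than the gap coming from $\phi$-convexity. This is why Theorem~\ref{general-thm} splits into two regimes: for $\alpha \in (2,4]$, $\lambda$-convexity of $h$ is sufficient since the gap scales like $\epsilon^{2k+2}$ against curvature corrections of order $\epsilon^{2k+\alpha+2}$ or higher, whereas for $\alpha > 4$ the genuine $\phi$-uniform convexity with $\phi(t)=C^* t^{\alpha/2-1}$ is needed because $h''$ degenerates at the origin. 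The hypothesis $C^*|w'(r)| \le r^{\alpha-1}$ enters precisely to control the discrepancy between $W$ and its local power-law model on $B(x_0, 2\epsilon)$.
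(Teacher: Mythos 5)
Your strategy mirrors the paper's: use the constancy of $V_\mu$ on connected pieces of $\supp(\mu)$ (Proposition~\ref{constant2}) to make the first-order term vanish under a local transport of the manifold mass to $\delta_{x_0}$, then drive the second-order term negative via convexity of $h$ with the symmetrization $\tfrac12[h(|\xi-\xi'|^2)+h(|\xi+\xi'|^2)]$; this is precisely Lemmas~\ref{localization} and~\ref{convexity}, organized there by pushing the flat disk measure $\nu_{\varepsilon,k}$ onto $\M$ via the graph map $P_g$ rather than carrying the surface measure directly.

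The genuine gap is the obstacle you name in your final paragraph and then leave unresolved: you assert, without a mechanism, that the curvature-induced asymmetry is of strictly smaller order in $\epsilon$ than the convexity gap, and your scalings are in fact wrong (for $\alpha\in(2,4]$ the gap is $\lambda\iint(\xi\cdot\xi')^2\,d\tilde\nu\,d\tilde\nu \sim \epsilon^{\alpha-4}\cdot\epsilon^{2k+4}=\epsilon^{2k+\alpha}$, not $\epsilon^{2k+2}$, and the curvature correction is only one, not two, powers of $\epsilon$ smaller). The paper's Lemma~\ref{errorestimate} closes this step by a decomposition your direct estimate of $E[\sigma]$ does not see: for the graph parameterization $P_g$ one has $|P_g(x)-P_g(y)|\ge|x-y|$ identically, so, since $w$ is decreasing near $0$, the pair-interaction part of the curvature correction already has the favorable sign and requires no estimate at all; the only piece needing control is the single-point term $w(|P_g(x)|)-w(|x|)$, which is bounded pointwise by $\|w'\|_{L^\infty([0,2\varepsilon])}\,\varepsilon\,\|\nabla g\|_{L^\infty(D_\varepsilon^k)}\lesssim \varepsilon^\alpha\|\nabla g\|_{L^\infty(D_\varepsilon^k)}$ via the mean value theorem and the hypothesis $C^*|w'(r)|\le r^{\alpha-1}$, and this is $o(\varepsilon^\alpha)$ precisely because $\nabla g(0)=0$. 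Without isolating this monotone piece from a pointwise-controllable piece, your bound on the asymmetry integral $\iint[h(|\xi-\xi'|^2)-h(|\xi+\xi'|^2)]\,d\tilde\nu\,d\tilde\nu$ has no visible route to completion.
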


Theorem \ref{thm1} is a direct consequence of Theorem
\ref{general-thm}, thanks to Proposition \ref{robin}.

\

We first provide an explicit formula for how the energy
changes when perturbing a local minimizer:
\begin{lemma} \label{localization}
Suppose that $W: \real^N \to (-\infty,+\infty]$ is symmetric,
l.s.c. and bounded from below with $W(0)<+\infty$. Let
$\mu\in\prob(\R^N)$ be a local minimizer of the interaction energy
with respect to $d_\infty$ and $E[\mu]<+\infty$. Given a connected
domain $\Omega\subseteq\supp(\mu)$, a Borel map
$\pi:\Omega\to\Omega$ and a convex decomposition
$\mu=m_1\mu_1+m_2\mu_2$ with $\supp(\mu_1)\subset \Omega$, we
deduce that
\begin{align}
E[m_1(\pi\#\mu_1)+m_2\mu_2]-E[m_1\mu_1+m_2\mu_2]=m_1^2
\,T[\pi\#\mu_1,\mu_1]\,. \label{main}
\end{align}
\end{lemma}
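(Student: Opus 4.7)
The plan is to reduce the identity to the fact that the potential $V_\mu=W*\mu$ is constant on $\Omega$, which is exactly the content of Proposition \ref{constant2}. Setting $\nu_1:=\pi\#\mu_1$, both $\mu_1$ and $\nu_1$ are probability measures supported in $\overline\Omega\subseteq\supp(\mu)$ because $\pi(\Omega)\subseteq\Omega$. Since $W(0)<+\infty$, Proposition \ref{constant2} yields a constant $c$ with $V_\mu\equiv c$ on $\overline\Omega$; moreover $c<+\infty$ because $E[\mu]<+\infty$ forces $V_\mu$ to be finite $\mu$-a.e., and $\mu(\overline\Omega)\ge m_1>0$.

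I would next expand the left hand side by the bilinearity of $B$ on positive measures to get
\begin{equation*}
E[m_1\nu_1+m_2\mu_2]-E[m_1\mu_1+m_2\mu_2]=m_1^2\bigl(E[\nu_1]-E[\mu_1]\bigr)+2m_1m_2\bigl(B[\nu_1,\mu_2]-B[\mu_1,\mu_2]\bigr),
\end{equation*}
so that comparing with $m_1^2 T[\nu_1,\mu_1]=m_1^2\bigl(E[\nu_1]-2B[\nu_1,\mu_1]+E[\mu_1]\bigr)$ reduces the claim to the cross-term identity
\begin{equation*}
m_2\bigl(B[\nu_1,\mu_2]-B[\mu_1,\mu_2]\bigr)=m_1\bigl(E[\mu_1]-B[\nu_1,\mu_1]\bigr).
\end{equation*}
This I would obtain by integrating the decomposition $V_\mu=m_1V_{\mu_1}+m_2V_{\mu_2}$ against the probability measures $\mu_1$ and $\nu_1$ in turn: both integrals equal $c$, producing the two relations $c=2m_1E[\mu_1]+2m_2B[\mu_1,\mu_2]$ and $c=2m_1B[\nu_1,\mu_1]+2m_2B[\nu_1,\mu_2]$, whose difference is exactly the needed identity.

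The main obstacle I expect is the integrability bookkeeping required to legitimize the bilinear expansion. To handle it cleanly I would first shift $W$ by an additive constant to make it nonnegative (this leaves both sides of the claim unchanged, since such a shift alters $E[\cdot]$ and every bilinear form by the same amount on probability measures). Finiteness of $E[\mu_1]$, $E[\mu_2]$, $B[\mu_1,\mu_2]$ then follows from the decomposition $E[\mu]=m_1^2E[\mu_1]+2m_1m_2B[\mu_1,\mu_2]+m_2^2E[\mu_2]<+\infty$, while finiteness of $B[\nu_1,\mu_1]$ and $B[\nu_1,\mu_2]$ drops out of the constancy equation $c=2m_1B[\nu_1,\mu_1]+2m_2B[\nu_1,\mu_2]$. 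If $E[\nu_1]=+\infty$ then both sides of the asserted identity are $+\infty$ and the statement holds trivially; otherwise Fubini justifies all the rearrangements and the short algebra above finishes the proof.
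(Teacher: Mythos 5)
Your proposal is correct and follows essentially the same route as the paper: both proofs expand the energy difference by bilinearity of $B$ and then invoke Proposition \ref{constant2} (constancy of $V_\mu$ on the connected compact set $\overline\Omega\subseteq\supp(\mu)$, together with $\pi(\Omega)\subseteq\Omega$) to convert the cross terms involving $\mu_2$ into terms involving only $\mu_1$ and $\pi\#\mu_1$, yielding exactly the key identity $B[\pi\#\mu_1,\mu]=B[\mu_1,\mu]$. The only cosmetic difference is that the paper integrates the pointwise equality $V_\mu(\pi(x))=V_\mu(x)$ against $d\mu_1(x)$ in one pass, while you integrate $V_\mu\equiv c$ separately against $\mu_1$ and $\pi\#\mu_1$ and subtract; your explicit bookkeeping of the degenerate case $E[\pi\#\mu_1]=+\infty$ is a small extra precaution the paper leaves implicit.
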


\begin{proof} Since $B$ in \eqref{defB} is a bilinear form,
\begin{align}
E[m_1(\pi\#\mu_1)+m_2\mu_2]&-E[m_1\mu_1+m_2\mu_2] \nonumber\\
&\quad=m_1^2B[\pi\#\mu_1,\pi\#\mu_1]+2m_1m_2B[\pi\#\mu_1,\mu_2] \nonumber\\
&\qquad- m_1^2B[\mu_1,\mu_1]-2m_1m_2B[\mu_1,\mu_2].\label{tech1}
\end{align}
We now use the fact that $\mu$ is local minimizer to express the
terms involving $\mu_2$  as terms involving only $\mu_1$.
Proposition \ref{constant2} implies that the function $V_\mu(x)$
is constant on the connected domain $\Omega$ and since
$\pi(\Omega) \subset \Omega$ we have:
\[
    \int_{\R^N} W(\pi(x)-y)\,d\mu(y)=\int_{\R^N} W(x-y)\,d\mu(y)\quad \forall x\in\Omega.
\]
and therefore, since $\mu=m_1\mu_1+m_2\mu_2$,
\begin{multline*}
m_1\int_{\R^N} W(\pi(x)-y)\,d\mu_1(y)+m_2\int_{\R^N} W(\pi(x)-y)\,d\mu_2(y)\\
=m_1\int_{\R^N} W(x-y)\,d\mu_1(y)+m_2\int_{\R^N} W(x-y)\,d\mu_2(y)
\end{multline*}
for all $x\in \Omega$.  Since $\supp(\mu_1)\subset \Omega$ we can
integrate both sides with respect to $d\mu_1(x)$ and obtain, after
multiplication by $m_1$:
\begin{multline*}
m_1^2\int_{\R^N\times\R^N}\!\!\!\!\!\!\!\!\!\!\!\! W(\pi(x)-y)\,d\mu_1(y)d\mu_1(x)+m_1m_2\int_{\R^N\times\R^N}\!\!\!\!\!\!\!\!\!\!\!\! W(\pi(x)-y)\,d\mu_2(y)\,d\mu_1(x)\\
=m_1^2\int_{\R^N\times\R^N}\!\!\!\!\!\!\!\!\!\!\!\!
W(x-y)\,d\mu_1(y)d\mu_1(x)+m_1m_2\int_{\R^N\times\R^N}\!\!\!\!\!\!\!\!\!\!\!\!
W(x-y)\,d\mu_2(y)\,d\mu_1(x)
\end{multline*}
or equivalently, using the $B$-notation,
\begin{equation*}
2 m_1^2 B[\mu_1,\pi\#\mu_1]+ 2m_1m_2 B[\mu_2,\pi\#\mu_1] = 2m_1^2
B[\mu_1,\mu_1] + 2m_1 m_2 B[\mu_2,\mu_1]
 \end{equation*}
and therefore rearranging the terms, we can express the terms
involving $\mu_2$ in terms of the ones involving only $\mu_1$:
\begin{equation*}
2m_1m_2 \Big[ B[\pi\#\mu_1, \mu_2]-B[\mu_1,\mu_2] \Big] = 2 m_1^2
\Big[B[\mu_1,\mu_1]- B[\mu_1,\pi\#\mu_1]  \Big].
 \end{equation*}
The desired identity \eqref{main} is readily obtained by plugging
the last equality into \eqref{tech1} reminding the definition of
$T[\mu,\nu]$ in Section 2.
\end{proof}

\begin{definition}
Let $1 \le k \le N$. We denote by $D^k_\varepsilon$ the
the $k$-dimensional disk of radius
$\varepsilon$:
$$
D^k_\varepsilon=\{ (x_1,\ldots,x_N): \;  x_1^2+\ldots+x_k^2 \le
\varepsilon^2\,\mbox{and } x_{k+1}=\dots=x_N=0\},
$$
and by $\nu_{\varepsilon,k} \in \prob(\real^N)$  the uniform
probability distribution on $D^k_\varepsilon$:
$$
\nu_{\varepsilon,k}=\frac1{|D^k_\varepsilon|}
\delta_{D^k_\varepsilon},
$$
where $|D_\varepsilon^k|$  is the Lebesgue
measure of dimension $k$ of $D_\varepsilon^k$, that is   $|D_\varepsilon^k|=\sigma_k \varepsilon^k$ where $\sigma_k$ the
area of the unit $k$-dimensional ball.
$\nu_{\varepsilon,k}$ then satisfies
$$
\int_{\R^N} \psi(x) d\nu_{\varepsilon,k}(x) = \frac
1{|D^k_\varepsilon|} \int_{x_1^2+\ldots+x_k^2 \le \varepsilon^2}
\psi(x_1,\dots,x_k,0,\dots,0) \,dx_1\dots dx_k
$$
for all $\psi\in C^0(\R^N)$.
 \end{definition}

The following Lemma combined with Lemma \ref{localization} shows
that if a flat $k$-dimensional disk is contained in the support of
a local minimizer, then the energy can be reduced by concentrating
all the mass contained in the disk into a single point. As a
consequence the support of a local minimizer cannot contain a
flat $k$-dimensional disk.

\begin{lemma} \label{convexity}
Suppose that $W(x)=-h(|x|^2)$ satisfies the assumptions of Theorem
{\rm\ref{general-thm}}. Then, there exists $c_{k,\alpha}>0$ such
that for $\varepsilon$ small enough,
\begin{equation*}
T[\delta_0,\nu_{\varepsilon,k}]=B[\delta_0,\delta_0] -2
B[\delta_0, \nu_{\varepsilon,k}]+ B[\nu_{\varepsilon,k} ,
\nu_{\varepsilon,k}] \le - c_{k,\alpha} \varepsilon^\alpha.
\end{equation*}
\end{lemma}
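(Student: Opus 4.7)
The plan is to compute $T[\delta_0,\nu_{\varepsilon,k}]$ directly from its definition, then exploit the central symmetry of $D^k_\varepsilon$ together with the uniform convexity of $h$ to extract a strictly negative contribution of order $\varepsilon^\alpha$. With $W(x)=-h(|x|^2)$, expanding the definition of $T$ yields
\[
T[\delta_0,\nu_{\varepsilon,k}] = -\tfrac12 h(0) + \int h(|y|^2)\,d\nu_{\varepsilon,k}(y) - \tfrac12\iint h(|x-y|^2)\,d\nu_{\varepsilon,k}(x)\,d\nu_{\varepsilon,k}(y).
\]
Since $\nu_{\varepsilon,k}$ is invariant under $x\mapsto -x$, the double integral equals its symmetrized form $\tfrac12\iint[h(|x-y|^2)+h(|x+y|^2)]\,d\nu_{\varepsilon,k}(x)\,d\nu_{\varepsilon,k}(y)$. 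Writing $r_1:=|x-y|^2$ and $r_2:=|x+y|^2$, one has $(r_1+r_2)/2=|x|^2+|y|^2$ and $|r_1-r_2|=4|x\cdot y|$, and all three quantities lie in $[0,4\varepsilon^2]$ for $x,y\in D^k_\varepsilon$. I would therefore choose $R=4\varepsilon^2$, which is admissible once $\varepsilon$ is small enough.

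The $\phi$-uniform convexity of $h$ on $[0,R]$ then gives the pointwise estimate $\tfrac12[h(r_1)+h(r_2)]\ge h(|x|^2+|y|^2)+\tfrac14\int_0^{4|x\cdot y|}\phi(t)\,dt$, while plain convexity of $h$ on $[0,R]$, applied to $|x|^2$ and $|y|^2$ as convex combinations of the endpoints $0$ and $|x|^2+|y|^2\le R$, yields the auxiliary bound $h(|x|^2+|y|^2)\ge h(|x|^2)+h(|y|^2)-h(0)$. Substituting both inequalities into the expansion of $T$, the $-\tfrac12 h(0)$ and $\int h(|y|^2)\,d\nu_{\varepsilon,k}(y)$ terms cancel exactly, leaving the clean estimate
\[
T[\delta_0,\nu_{\varepsilon,k}] \le -\tfrac18\iint \int_0^{4|x\cdot y|}\phi(t)\,dt\,d\nu_{\varepsilon,k}(x)\,d\nu_{\varepsilon,k}(y).
\]

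To finish, I would read off the $\varepsilon$-scaling by the change of variables $x=\varepsilon\tilde x$, $y=\varepsilon\tilde y$, which sends $\nu_{\varepsilon,k}$ to $\nu_{1,k}$ and turns $|x\cdot y|^p$ into $\varepsilon^{2p}|\tilde x\cdot\tilde y|^p$. For $\alpha>4$, $\phi(t)=C^*t^{\alpha/2-1}$ makes the inner integral a positive multiple of $|x\cdot y|^{\alpha/2}$ and the right-hand side scales as $\varepsilon^\alpha$ times a positive constant depending only on $k$ and $\alpha$. For $\alpha\in(2,4]$, $\phi(t)=\lambda t$ with $\lambda=C^*R^{\alpha/2-2}=C^*(4\varepsilon^2)^{\alpha/2-2}$; the inner integral equals $8\lambda(x\cdot y)^2$, the double integral of $(x\cdot y)^2$ scales as $\varepsilon^4$, and the extra factor $\lambda\sim\varepsilon^{\alpha-4}$ combines with it to give the desired $\varepsilon^\alpha$. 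The main subtlety lies exactly here: a fixed $\lambda$-convexity constant would deliver only a bound of order $\varepsilon^4$, which is useless for $\alpha<4$. The correct $\varepsilon^\alpha$ scaling is recovered only because the hypothesis allows $R$ to depend on $\varepsilon$, so that $\lambda=C^*R^{\alpha/2-2}$ supplies precisely the missing factor $\varepsilon^{\alpha-4}$. This is why the statement of Theorem \ref{general-thm} couples $\lambda$ to the interval length $R$ rather than fixing both.
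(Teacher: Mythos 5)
Your proposal is correct and follows essentially the same route as the paper: symmetrize $B[\nu_{\varepsilon,k},\nu_{\varepsilon,k}]$ using the central symmetry of $\nu_{\varepsilon,k}$, apply the $\phi$-uniform convexity of $h$ to the pair $r_1=|x-y|^2$, $r_2=|x+y|^2$ (noting $|r_1-r_2|=4|x\cdot y|$), apply plain convexity with $h(0)$ to compare $h(|x|^2+|y|^2)$ with $h(|x|^2)+h(|y|^2)$, and scale $x=\varepsilon\tilde x$, $y=\varepsilon\tilde y$ to read off the $\varepsilon^\alpha$ rate. The only cosmetic difference is that you do not first normalize $h(0)=0$, and your closing remark correctly isolates the point the paper leaves implicit -- that for $\alpha\in(2,4]$ the $\lambda$ in the hypothesis must scale like $R^{\alpha/2-2}\sim\varepsilon^{\alpha-4}$ for the estimate to close.
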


\begin{proof}
Since $W$ is bounded from below and $W(0)<+\infty$, we can assume
without loss of generality that $W(0)=0$ by adding to $W$ a
suitable constant. Then, the first term $B[\delta_0,\delta_0]$ is
equal to zero. Symmetrizing the integral involved in second term
we obtain:
\begin{align*}
B[\delta_0, \nu_{\varepsilon,k}]=-\frac{1}{2}\int_{\R^N} h(|y|^2)
d\nu_{\varepsilon,k}(y)&=-\frac{1}{2}\int_{\R^N\times\R^N}\!\!\!\!\!\!
h(|y|^2)\,d\nu_{\varepsilon,k}(x) d\nu_{\varepsilon,k}(y) \\ &=
-\frac{1}{4}\int_{\R^N\times\R^N}\!\!\!\!\!\!
\left[h(|x|^2)+h(|y|^2)\right]\,d\nu_{\varepsilon,k}(x)
d\nu_{\varepsilon,k}(y).
\end{align*}
Since $\nu_{\varepsilon,k}(-y)=\nu_{\varepsilon,k}(y)$, we can
also symmetrize the third term and obtain:
\begin{align*}
B[\nu_{\varepsilon,k},\nu_{\varepsilon,k}]&=
-\frac{1}{2}\int_{\R^N\times\R^N}\!\!\!\!\!\!
h(|x-y|^2)\,d\nu_{\varepsilon,k}(x) d\nu_{\varepsilon,k}(y) \\
&=- \frac{1}{4}\int_{\R^N\times\R^N}\!\!
\left[h(|x-y|^2)+h(|x+y|^2)\right]\;\; d\nu_{\varepsilon,k}(x)
d\nu_{\varepsilon,k}(y).
\end{align*}
Combining the three terms we find
\begin{equation} \label{baba}
T[\delta_0,\nu_{\varepsilon,k}] = \frac{1}{2}
\int_{\R^N\times\R^N} A(x,y)
 \;\; d\nu_{\varepsilon,k}(x) d\nu_{\varepsilon,k}(y)
\end{equation}
with
$$
A(x,y):=h(|x|^2)+h(|y|^2) - \frac{h(|x-y|^2)+h(|x+y|^2)}{2}\,.
$$
Under the assumptions of Theorem \ref{thm1}, $h$ is convex on
$(0,2\varepsilon^2)$ and since $h(0)=0$, we deduce
$$
h(r_i^2)\leq \frac{r_i^2}{r_1^2+r_2^2}h(r_1^2+r_2^2),
$$
for $r_i\geq 0$, $i=1,\,2$. Using the above inequalities for
$i=1,2$ we get
\begin{eqnarray*}
h(|x|^2)+h(|y|^2) \le h(|x|^2+|y|^2) = h\left(\frac{1}{2} |x+y|^2+
\frac{1}{2}|x-y|^2\right).
\end{eqnarray*}

In the rest of this Lemma, $C$ will denote some generic constant
that will change from step to step. For $\alpha\in(2,4]$, $h$ is
$\lambda-$convex on $(0,2\varepsilon^2)$ with
$\lambda=C\varepsilon^{\alpha-4}$, so that plugging into
\eqref{lambda-convexity}, we obtain
\begin{align}
h(|x|^2)+h(|y|^2) & \leq \frac{1}{2} h\left( |x+y|^2 \right)+
\frac{1}{2} h \left(|x-y|^2\right)-  C\varepsilon^{\alpha-4}
(x\cdot y)^2. \label{bibibi1}
\end{align}
Combining \eqref{baba} and \eqref{bibibi1} we get:
\begin{align*}
T[\delta_0,\nu_{\varepsilon,k}] & \le - C\varepsilon^{\alpha-4}
  \int_{\R^N\times\R^N} (x\cdot y)^2    \;\;  d\nu_{\varepsilon,k}(x) d\nu_{\varepsilon,k}(y) \\
  & =- C\varepsilon^\alpha
  \int_{\R^N\times\R^N} (x\cdot y)^2    \;\;  d\nu_{1,k}(x)
  d\nu_{1,k}(y)\,.
\end{align*}

For $\alpha\geq 4$, $h$ is $\phi-$convex with
$\phi(t)=Ct^{\alpha/2-1}$, so that plugging into
\eqref{q-convexity}, we obtain
\begin{align}
h(|x|^2)+h(|y|^2)\leq \frac{1}{2} h\left( |x+y|^2 \right)+
\frac{1}{2} h \left(|x-y|^2\right)-  C|x\cdot y|^{\alpha/2}.
\label{bibibi2}
\end{align}
Combining \eqref{baba} and \eqref{bibibi2} we get:
\begin{align*}
T[\delta_0,\nu_{\varepsilon,k}] &\le - C \int_{\R^N\times\R^N}
|x\cdot y|^{\alpha/2} \;\;
d\nu_{\varepsilon,k}(x) d\nu_{\varepsilon,k}(y) \\
&=- C\varepsilon^\alpha \int_{\R^N\times\R^N} |x\cdot
y|^{\alpha/2}    \;\; d\nu_{1,k}(x) d\nu_{1,k}(y) \,.
\end{align*}
\end{proof}

The last Lemma combined to Lemma \ref{localization} shows that the
support of a local minimizer cannot contain a flat $k$-dimensional
disk of radius $\epsilon$. To conclude the  proof of Theorem
\ref{general-thm}, we need to introduce some differential geometry
tools. Let $R>0$, and $g:D_R^k\to \real^{N-k}$ a $C^2$-function
such that $g(0)=0$, $\nabla g(0)=0$. We define the
parameterisation $P_g$ of the graph of $g$ as follows:
\begin{eqnarray}
P_g: D_R^k &\longrightarrow & \phantom{ds}\real^N,\label{Pg}\\
(x',0)\phantom{dsf}&\mapsto&(x',g(x'))\nonumber
\end{eqnarray}
where $x'=(x_1,\dots,x_k)$ stands for the $k$ first coordinates.
Let us remark that classical differential geometry implies that
any $C^2$-manifold can be locally parameterized by such graphs by
choosing conveniently the axis and reordering of variables.
Moreover, this can be done in such a way that the volume element
of the graph $J_g$ is as close to the unit volume element of the
flat tangent space by taking $R$ small enough, see
\cite{PRminimal}. More precisely, there exists a constant $C_g$
depending only on the second derivatives of $g$ on $D_R^k$ such
that
\begin{equation}\label{geodiff}
\|J_g-1\|_{L^\infty(D_\varepsilon^k)}\leq C_g\varepsilon,
\end{equation}
for $0<\varepsilon<R$ small enough.

\begin{lemma} \label{errorestimate}
If $W$ satisfies the assumptions of Theorem
{\rm\ref{general-thm}}, and $g\in C^2(D_R^k, \real^{N-k})$
satisfies $g(0)=0$, $\nabla g(0)=0$, then for $\varepsilon>0$
small enough,
\begin{align*}
T[\delta_0, P_g
\#\nu_{\varepsilon,k}]-T[\delta_0,\nu_{\varepsilon,k}] \le
\frac{2^{\alpha-1}\varepsilon^\alpha}{C^\ast}\|\nabla g\|_{L^\infty(D^k_\varepsilon)}.
\end{align*}
\end{lemma}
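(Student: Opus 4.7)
My plan is to expand the difference $T[\delta_0, P_g\#\nu_{\varepsilon,k}] - T[\delta_0,\nu_{\varepsilon,k}]$ via bilinearity. Since $T[\delta_0,\nu] = E[\delta_0] - 2B[\delta_0,\nu] + E[\nu]$ and the $E[\delta_0]$ terms cancel, the estimate reduces to controlling
\[
-2\bigl(B[\delta_0, P_g\#\nu_{\varepsilon,k}] - B[\delta_0,\nu_{\varepsilon,k}]\bigr) + \bigl(E[P_g\#\nu_{\varepsilon,k}] - E[\nu_{\varepsilon,k}]\bigr).
\]
I will show that the second bracket is non-positive (so only the first contributes to the upper bound), and then give a pointwise bound on the $B$-difference.

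For the $E$-term, I use that for every $x',y'\in D_\varepsilon^k$,
\[
|P_g(x') - P_g(y')|^2 = |x'-y'|^2 + |g(x')-g(y')|^2 \ge |x'-y'|^2.
\]
Since $\nabla g(0)=0$ and $\nabla g$ is continuous, for $\varepsilon$ small enough $\|\nabla g\|_{L^\infty(D_\varepsilon^k)}\le 1$; hence both $|P_g(x')-P_g(y')|$ and $|x'-y'|$ lie in the neighborhood of $0$ where $w$ is decreasing. This gives $w(|P_g(x')-P_g(y')|)\le w(|x'-y'|)$ and therefore $E[P_g\#\nu_{\varepsilon,k}] - E[\nu_{\varepsilon,k}]\le 0$, so that term may be discarded in the upper bound.

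For the $B$-term, the push-forward formula yields
\[
B[\delta_0, P_g\#\nu_{\varepsilon,k}] - B[\delta_0,\nu_{\varepsilon,k}] = \frac{1}{2|D_\varepsilon^k|}\int_{D_\varepsilon^k}\bigl[w(|P_g(x')|) - w(|x'|)\bigr]dx'.
\]
I bound the integrand pointwise by integrating along the segment $\gamma(t) = (x', tg(x'))$, $t\in[0,1]$, which joins $x'$ to $P_g(x')$ since $P_g(x')-x' = (0,g(x'))$. Then
\[
w(|P_g(x')|) - w(|x'|) = \int_0^1 w'(|\gamma(t)|)\,\frac{\gamma(t)\cdot\gamma'(t)}{|\gamma(t)|}\,dt,
\]
and Cauchy--Schwarz gives $|\gamma(t)\cdot\gamma'(t)|\le |\gamma(t)||g(x')|$. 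Combining with the hypothesis $|w'(r)|\le r^{\alpha-1}/C^\ast$, the bound $|\gamma(t)|\le |x'| + |g(x')|\le(1+\|\nabla g\|_\infty)\varepsilon\le 2\varepsilon$, and $|g(x')|\le \|\nabla g\|_\infty\cdot|x'|\le \|\nabla g\|_\infty\,\varepsilon$ (using $g(0)=0$ and the fundamental theorem of calculus), I obtain
\[
\bigl|w(|P_g(x')|) - w(|x'|)\bigr| \le \frac{(2\varepsilon)^{\alpha-1}|g(x')|}{C^\ast} \le \frac{2^{\alpha-1}\varepsilon^{\alpha}\,\|\nabla g\|_{L^\infty(D_\varepsilon^k)}}{C^\ast}.
\]
Averaging over $D_\varepsilon^k$ and multiplying by $-2$ gives the advertised bound on $-2\Delta B$, and combining with $\Delta E\le 0$ finishes the proof.

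The main delicate point is to ensure that we pick up only one power of $\|\nabla g\|_\infty$ rather than two; the path-integral representation above achieves this thanks to Cauchy--Schwarz, which separates the Jacobian factor $|g(x')|$ from the variation of $|\gamma(t)|$. Everything else is routine, provided we first verify that for $\varepsilon$ small enough, all distances involved stay in the region where the hypotheses $|w'(r)|\le r^{\alpha-1}/C^\ast$ and monotonicity of $w$ are available.
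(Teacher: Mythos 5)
Your proof is correct and follows essentially the same structure as the paper's: split the $T$-difference into the $E$-difference and the $B$-difference; discard $E[P_g\#\nu_{\varepsilon,k}]-E[\nu_{\varepsilon,k}]\le 0$ using $|P_g(x)-P_g(y)|\ge|x-y|$ and the local monotonicity of $w$; and then bound $w(|P_g(x')|)-w(|x'|)$ pointwise using the hypothesis $C^\ast|w'(r)|\le r^{\alpha-1}$ on $[0,2\varepsilon]$ together with $|g(x')|\le\varepsilon\|\nabla g\|_{L^\infty(D_\varepsilon^k)}$. The only stylistic difference is in the pointwise estimate: you parametrize the segment $\gamma(t)=(x',tg(x'))$ and apply Cauchy--Schwarz to the line integral, whereas the paper simply invokes the reverse triangle inequality $\bigl||P_g(x')|-|x'|\bigr|\le|P_g(x')-(x',0)|=|g(x')|$ and then the mean value theorem for $w$; both routes yield the same single power of $\|\nabla g\|_{L^\infty(D_\varepsilon^k)}$ and the same constant $2^{\alpha-1}/C^\ast$.
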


\begin{proof}
Note that by continuity for $\varepsilon>0$ small enough, we have
$\|\nabla g\|_{L^\infty(D^k_\varepsilon)}\leq 1$. We first point
out that
$$
T[\delta_0, P_g
\#\nu_{\varepsilon,k}]-T[\delta_0,\nu_{\varepsilon,k}]=\int_{\R^N\times\R^N}
A(x,y) d \nu_{\varepsilon,k}(x) d\nu_{\varepsilon,k}(y)
$$
with
$$
A(x,y)= \frac{w (|P_g(x)-P_g(y)|)-w(|x-y|)}{2}- \Big[
w(|P_g(x)|)-w(|x|)\Big]\,.
$$
Thanks to the definition of the parameterisation $P_g$,
$|P_g(x)-P_g(y)|\geq |x-y|$. Moreover since  $w$ is decreasing in
a neighborhood of the $0$, the first term in $A(x,y)$ is negative
for $\max(|x|,|y|)<\varepsilon$ small enough. To estimate the
second term, we use the mean value theorem for $g$ around $x'=0$,
remembering that $ g(0)=0$:
$$
|P_g(x)-(x',0)|=|g(x')-g(0)|\leq \varepsilon \|\nabla
g\|_{L^\infty(D^k_\varepsilon)},
$$
since $C^\ast|w'(r)|\leq r^{\alpha-1}$, we conclude
$$
w(|P_g(x)|)-w(|x|) \leq
\|w'\|_{L^\infty([0,2\varepsilon])}\varepsilon\|\nabla
g\|_{L^\infty(D^k_\varepsilon)} \leq
\frac{2^{\alpha-1}\varepsilon^\alpha}{C^\ast}\|\nabla
g\|_{L^\infty(D^k_\varepsilon)}\,.
$$
\end{proof}

\begin{proof}[Proof of the Theorem \ref{thm1}]
Assume that $\mu$ is a local minimizer of $E$ in $d_\infty$ and
that it has a regular $k$-dimensional part in the sense of
Definition \ref{kdimensional}. Let $\mathcal{M}$ be the
$C^1$-submanifold on which this component is supported, and $f$ be
the density on $\mathcal{M}$ of this component. Due to assumption
\eqref{positivity}  there exists $x_0 \in \M$, $c,\kappa>0$ such
that
$$
 f(x)\geq c \,,\qquad \forall x\in {\mathcal M} \cap B(x_0,\kappa).
$$
As discussed above and without loss of generality, we can assume
that $x_0$=0 and that $\mathcal M$ is locally the graph of a
$C^2$-function $g:D_R^k\to \real^{N-k}$, for some $\kappa>R>0$,
such that $g(0)=0$, $\nabla g(0)=0$.

Let $P_g$ be the parameterisation defined in \eqref{Pg}. Note that
for $\varepsilon\leq R$, $\mu_1^\varepsilon:=P_g\#
\nu_{\varepsilon,k}\in\prob(\R^N)$ is absolutely continuous with
respect to the volume element on $\mathcal{M}$ with a density
denoted still by $\mu_1^\varepsilon$ satisfying
$$
\|\mu_1^\varepsilon\|_{L^\infty(\mathcal M, d\sigma)}\leq
\frac{1}{|D_\varepsilon^k| \,I_\varepsilon}\leq
\frac{1}{|D_\varepsilon^k| \,(1-C_g\varepsilon)}\,, \quad
\mbox{with } I_\varepsilon=\inf_{x\in\bar D_\varepsilon^k} J_g(x).
$$
where we used \eqref{geodiff}. Therefore, choosing
$m_1=\tfrac{c}2|D_\varepsilon^k|\,(1-C_g\varepsilon)$, then $f(x)>
m_1 \mu_1^\varepsilon$ on $x\in D_\varepsilon^k$, and we can
decompose $\mu$ as a convex combination
$$
\mu= m_1  \mu_1^\varepsilon+ m_2 \mu_2^\varepsilon,
$$
where $\mu_2^\varepsilon\in\prob(\R^N)$.

We are going to send now all mass from $\mu_1^\varepsilon$ to a
Dirac Delta at $x_0=0$. Let us define $\pi:\real^N\longrightarrow
\real^N$ by $\pi\equiv 0$ and $\mu^\varepsilon:=
m_1\pi\#\mu_1^\varepsilon+ m_2 \mu_2^\varepsilon$, then
$\pi\#\mu_1^\varepsilon=\delta_{0}$. Moreover, $\mu^\varepsilon$
is a small perturbation of $\mu$ in $d_\infty$:
\begin{equation}\label{tech2}
d_\infty(\mu,\mu^\varepsilon)\leq \varepsilon(1+\|\nabla
g\|_{L^\infty(D_\varepsilon^k)}).
\end{equation}
To check this just take a map $\mathcal{T}$ in Definition
\ref{disinfty} such that $\mathcal{T}(x)=x$ for all
$x\in\mathcal{M}/P_g(D_\varepsilon^k)$ and such that
$\mathcal{T}(x)=0$ for $x\in P_g(D_\varepsilon^k)$. Thus, the
maximum displacement produced by the transport map $\mathcal{T}$
is bounded by the maximum of $|P_g(x)|$ for $x\in
P_g(D_\varepsilon^k)$ leading to \eqref{tech2} using that $g(0)=0$
and the mean value theorem.

Since $\mu_1^\varepsilon$ has a connected support that contains
$\pi(\supp(\mu_1^\varepsilon))=\{0\}$, we can apply Lemma
\ref{localization} to get:
\begin{align*}
 E[\mu^\varepsilon]-E[\mu]&= m_1^2 \,T[\pi\#\mu_1^\varepsilon,\mu_1^\varepsilon]\\
&=m_1^2
\,T[\pi\#\mu_1^\varepsilon,\nu_{\varepsilon,k}]+m_1^2\left(
T[\pi\#\mu_1^\varepsilon,\mu_1^\varepsilon] -
T[\pi\#\mu_1^\varepsilon,\nu_{\varepsilon,k}]\right)
\end{align*}
Since $\pi\#\mu_1^\varepsilon=\delta_0$, we can use Lemma
\ref{convexity} to estimate the first term, and since moreover
$\mu_1^\varepsilon=P_g\#\nu_{\varepsilon,k}$, we can use Lemma
\ref{errorestimate} to estimate the last two terms, so that we
finally conclude
$$
E[\mu^\varepsilon]-E[\mu]\leq
m_1^2\left[-c_{k,\alpha}\varepsilon^\alpha+C\varepsilon^\alpha\|\nabla
g\|_{L^\infty(D_\varepsilon^k)}\right].
$$
Since $g\in C^1(D_R^k)$ and $\nabla g(0)=0$ imply that $\|\nabla
g\|_{L^\infty(D_\varepsilon^k)}\to 0$ as $\varepsilon\to 0$, thus
if $\varepsilon>0$ is small enough, $E[\mu^\varepsilon]-E[\mu]<0$.

Thus, $\mu^\varepsilon$ is a better competitor in the minimization
of $E$ for $\varepsilon$ arbitrary small. This leads to a
contradiction with the fact that $\mu$ is a local minimizer of $E$
showing Theorem \ref{thm1}.
\end{proof}


\section{Euler-Lagrange approach to study local minimizers in the $d_2$-topology }

So far we have used transport plans to build perturbed measures.
This enabled us to study local minimizers of the interaction
energy with respect to the $d_\infty$-topology. To study local
minimizers with respect to the $d_2$-topology it is actually
possible to use a more classical Euler-Lagrange approach as we
will present in this section. The Euler-Lagrange conditions that
we will derive were formally obtained in \cite{BT2} by perturbing
densities inside and outside their support. Here, we provide a
fully rigorous proof in the case of probability measures endowed
with the distance $d_2$.

\begin{theorem}
Given an interaction potential $W$ satisfying {\rm (H1)--(H2)}.
Let us consider $\mu\in\mathcal{P}_2(\R^N)$ a local minimizer of
$E$ with respect to $d_2$ such that $E[\mu]<\infty$. Then,
\begin{enumerate}
\item[(i)] $(W\ast \mu)(x)=2E[\mu]$ $\mu$-a.e.

\item[(ii)] $(W\ast \mu)(x)\leq 2E[\mu]$ for all $x\in
\supp(\mu)$.

\item[(iii)] $(W\ast\mu)(x)\geq 2E[\mu]$ for a.e. $x\in\R^N$.
\end{enumerate}
\end{theorem}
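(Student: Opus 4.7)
The plan is to adapt the convex-combination perturbation of Proposition \ref{constant} to the $d_2$-setting and extract pointwise information on $V_\mu := W\ast\mu$ by perturbing in two opposite directions simultaneously. Given a part $\sigma$ of $\mu$ with mass $m>0$ and a test probability $\nu\in\prob_2(\R^N)$, I consider
$$
\mu_t := \mu - t\sigma + tm\nu,\qquad t\in[0,1],
$$
which is a probability measure since $\mu-t\sigma\ge(1-t)\mu\ge 0$. Coupling $(\mu-t\sigma)$ to itself by the identity and $t\sigma$ to $tm\nu$ via the product plan $t\sigma\otimes\nu$ yields $d_2^2(\mu,\mu_t)\le t\iint|x-y|^2\,d\sigma(x)d\nu(y)\to 0$, so $\mu_t\to\mu$ in $d_2$. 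Bilinearity of $B$ then gives
$$
E[\mu_t]-E[\mu] = 2tm\bigl(B[\mu,\nu]-\tfrac{1}{m}B[\mu,\sigma]\bigr) + t^2 m^2\,T[\sigma/m,\nu],
$$
and the local $d_2$-minimality of $\mu$ forces the Euler--Lagrange inequality
$$
\int V_\mu\,d\nu \ge \tfrac{1}{m}\int V_\mu\,d\sigma \qquad (\ast)
$$
whenever the quadratic term is finite, which (since $E[\sigma/m]\le m^{-2}E[\mu]<+\infty$) reduces to the admissibility condition $E[\nu]+B[\mu,\nu]<+\infty$.

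To prove (ii), I would fix $x_0\in\supp(\mu)$ and apply $(\ast)$ with $\sigma = \mu|_{B(x_0,r)}$ (of positive mass since $x_0$ is in the support) and $\nu=\mu$; both admissibility conditions hold trivially. This yields
$$
2E[\mu] = \int V_\mu\,d\mu \ge \frac{1}{\mu(B(x_0,r))}\int_{B(x_0,r)} V_\mu\,d\mu.
$$
The lower semicontinuity of $V_\mu$ (Lemma \ref{lsc-conv}) bounds the right-hand side below by $V_\mu(x_0)-\epsilon$ for $r$ small, and passing to the limit gives $V_\mu(x_0)\le 2E[\mu]$, with $V_\mu(x_0)<+\infty$ forced en route. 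Statement (i) is then an immediate corollary: (ii) gives $V_\mu\le 2E[\mu]$ pointwise on $\supp(\mu)$, hence $\mu$-a.e., and the identity $\int V_\mu\,d\mu=2E[\mu]$ upgrades this to equality $\mu$-a.e.

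For (iii), I would take $\sigma=\mu$ in $(\ast)$, which yields $\int V_\mu\,d\nu\ge 2E[\mu]$ for every admissible $\nu$. Testing against normalized indicators $\nu_{x,r} = |B(x,r)|^{-1}\mathds{1}_{B(x,r)}$ and applying the Lebesgue differentiation theorem at a.e. $x$ then recovers the pointwise bound $V_\mu(x)\ge 2E[\mu]$ almost everywhere. The main technical obstacle I expect is precisely in this step: under only (H1)--(H2) the potential $W$ need not be locally integrable, so $E[\nu_{x,r}]$ can be infinite and these indicators may fail the admissibility condition. I would bypass this either by replacing the indicators with mollifications $\nu_r=\mu\ast\phi_r$ (so that finite self-energy is inherited from $E[\mu]<+\infty$) or by arguing by contradiction on a set $\{V_\mu<2E[\mu]-\delta\}$ of positive Lebesgue measure, carefully selecting a bounded subset on which a smooth admissible test measure can be supported.
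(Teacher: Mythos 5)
Your route to (i) and (ii) is genuinely different from the paper's and it is correct. The paper perturbs $\mu$ \emph{multiplicatively}, setting $\mu_\epsilon = (1+\epsilon a)\mu$ with $a=\varphi-\int\varphi\,d\mu$ and $\varphi\in C_0^\infty$; this never moves mass off $\supp(\mu)$, makes the $d_2$-convergence and the finiteness of $E[\mu_\epsilon]$ trivial by dominated convergence, and after taking $\pm\varphi$ gives (i) directly, with (ii) then following from lower semicontinuity. You instead reuse the transport-style convex-combination perturbation $\mu_t=\mu-t\sigma+tm\nu$ from Section~3, obtain the Euler--Lagrange inequality $(\ast)$, specialize $\sigma=\mu|_{B(x_0,r)}$, $\nu=\mu$ to get $2E[\mu]\ge m_r^{-1}\int_{B(x_0,r)}V_\mu\,d\mu$, and extract (ii) by lower semicontinuity, then deduce (i) from (ii) together with $\int V_\mu\,d\mu=2E[\mu]$. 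Both are sound; yours has the conceptual advantage of unifying Section~5 with the machinery of Propositions~\ref{constant}--\ref{constant2}, while the paper's multiplicative variation sidesteps the need to check finiteness of cross-terms involving a new measure, since every quantity is controlled by $E[\mu]<\infty$.

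For (iii), your idea is the same as the paper's (the paper takes $\nu=\psi-(\int\psi\,dx)\mu$ with $\psi\in C_0^\infty$, $\psi\ge 0$, which is just a smooth version of your normalized indicators), and you are right to flag the integrability obstruction: under (H1)--(H2) alone nothing guarantees that $E[\psi]$ (equivalently $\iint_{K\times K}W(x-y)\,dx\,dy$ for compact $K$) is finite, and if it is not, then $E[\mu_\epsilon]=+\infty$ and the local-minimality inequality is vacuous. The paper's proof silently passes over this point. However, the two fallbacks you sketch do not repair it: mollification $\nu_r=\mu\ast\phi_r$ only produces $\int(V_\mu\ast\phi_r)\,d\mu\ge 2E[\mu]$, which is information \emph{along} $\mu$, not Lebesgue-a.e.\ on $\R^N$; and restricting to a compact subset of $\{V_\mu<2E[\mu]-\delta\}$ controls $\int\psi V_\mu\,dx$ but still leaves $E[\psi]$ uncontrolled, since that double integral involves $W$ alone. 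In effect (iii) seems to require local integrability of $W$ (i.e.\ something like (H3-loc)), which the paper uses elsewhere but does not list among the hypotheses of this theorem; with that added, the indicator/mollifier argument and the Lebesgue differentiation theorem close the proof exactly as you propose.
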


\begin{proof}
As usual, we assume that $W\geq 0$ without loss of generality.
Lemma~\ref{lsc-conv} implies that $W\ast \mu$ is well defined,
lower semicontinuous, and non-negative.

In order to prove the first two items, let us choose $\varphi\in
C^\infty_0(\R^N)$ to define
\[
   \nu = \left(\varphi-\int_{\R^N} \varphi d\mu\right) \mu:= a(x)\mu
\]
and $\mu_\epsilon = \mu + \epsilon\nu = (1+\epsilon a(x))\mu$ with
$\epsilon>0$ to be specified. It is clear that
$\mu_\epsilon(\real^N) = 1$ since $a(x)$ has zero integral with
respect to $\mu$. Moreover, since $a(x) \geq
-2\|\varphi\|_{L^\infty}$ then $\mu_\epsilon\geq 0$ for $\epsilon
< \frac{1}{2\|\varphi\|_{L^\infty}}=\epsilon_\varphi$. Therefore,
$\mu_\epsilon\in \mathcal{P}(\R^N)$ for all
$\epsilon<\epsilon_\varphi$. It is easy to check that
$\mu_\epsilon\in \mathcal{P}_2(\R^N)$, that $\mu_\epsilon
\rightharpoonup \mu$ weakly-$\ast$ as measures, and
\[
   \int_{\R^N} |x|^2 d\mu_\epsilon \to \int_{\R^N} |x|^2 d\mu \,.
\]
In fact, since $\epsilon a(x)$ converges uniformly to $0$, these
claims follow by dominated convergence theorem. Therefore, we
conclude that
\[
   d_2(\mu_\epsilon,\mu)\to 0 \quad\text{as}\quad \epsilon\to 0.
\]
Note that it is not true that $d_\infty(\mu_\epsilon,\mu)\to 0$ as
$\epsilon\to 0$ since for localized test functions $\varphi$ in
subsets of the $\supp (\mu)$, we are always forced to move mass in
$\supp(\mu)$ for a fixed distance not depending on $\epsilon$.

Now, since $\mu$ is a local minimizer in $d_2$ then
$E[\mu_\epsilon]\geq E[\mu]$ for $\epsilon$ small enough.
Moreover, since $\mu$ has finite energy, then
$E[\mu_\epsilon]<\infty$ and we can expand it as
\[
\frac{E[\mu_\epsilon] - E[\mu]}{\epsilon} = \iint_{\R^N\times
\R^N} W(x-y)d\nu(x)d\mu(y) + \frac{\epsilon}{2}\iint_{\R^N\times
\R^N} W(x-y)d\nu(x)d\nu(y)\geq
   0\, ,
\]
with both integral terms well-defined. As $\epsilon\to 0$, we
easily get
$$
\iint_{\R^N\times \R^N} W(x-y)d\nu(x)d\mu(y)\geq 0
$$
or equivalently,
$$
\int \varphi \left[(W\ast\mu)(x)-2E[\mu]\right]d\mu(x)\geq0
$$
for all $\varphi\in C_0^\infty(\R^N)$. Since one can take either
$\varphi$ or $-\varphi$ as test functions, we deduce
$$
\int \varphi \left[(W\ast\mu)(x)-2E[\mu]\right]d\mu(x) = 0
$$
for all $\varphi\in C_0^\infty(\R^N)$, and thus (i) is satisfied
a.e. $\mu$.

Let us now prove (ii). Take $x\in\supp(\mu)$ then there exists
$\lbrace x_n\rbrace_{n\in\N} \to x$ with $x_n\in \supp(\mu)$, such
that $(W\ast\mu)(x_n)=2E[\mu]$. The existence of such a sequence
is ensured since $\mu(B(x,\epsilon))>0$ for all $\epsilon >0$ by
definition of the support of $\mu$. Then, by lower semicontinuity
of $W\ast \mu$ we get
\[
    (W\ast\mu)(x)\leq \liminf_{n\to\infty}
    (W\ast\mu)(x_n)=2E[\mu]\, .
\]
and then (ii) is satisfied.

In order to show (iii), we consider different variations to the
ones constructed above. Take $\psi\in\C_0^\infty(\R^N)$, $\psi\geq
0$ and then take
$$
\nu = \psi - \left( \int_{\R^N}\psi \,dx\right)\mu.
$$
Again, defining $\mu_\epsilon = \mu+\epsilon\nu$, then it verifies
$\mu_\epsilon (\real^N)=1$ and if $\epsilon < 1/\int \psi \,dx$
then $\mu_\epsilon \geq 0$. Let us remark that this cannot be done
for a changing sign test function $\psi$. As previously, it is
easy to check that
\[
   d_2(\mu_\epsilon,\mu)\to 0\quad\text{as}\quad\epsilon\to 0,
\]
note that it is not true that $d_\infty(\mu_\epsilon,\mu)\to 0$ as
$\epsilon\to 0$ since we always need to transport some mass from
outside the support of $\mu$ to $\supp (\mu)$.

Proceeding similarly as in point (i), we get
\[
   \iint_{\R^N\times \R^N} W(x-y)d\nu(y)d\mu(x) \geq 0
\]
taking $\epsilon \to 0$ in $E[\mu_\epsilon]\geq E[\mu]$.
Therefore, we conclude that
\[
   \int_{\R^N}\left( (W\ast \mu)(x)-2E[\mu]\right) \psi \,dx\geq
   0\, ,
\]
for all $\psi\in C_0^\infty(\R^N)$, $\psi\geq 0$. This readily
implies (iii).
\end{proof}

\begin{remark}
Note that putting together (i), (ii), and (iii) in previous
theorem, we conclude that
\begin{equation*}
\begin{cases}
(W\ast \mu)(x) &= 2E[\mu]\quad \text{for a.e.}\quad x\in\supp(\mu)\\
(W\ast \mu)(x) &\geq 2E[\mu]\quad \text{for a.e.} \quad x\in
\R^N\setminus \supp(\mu).
\end{cases}
\end{equation*}
if $\mu$ is absolutely continuous with respect to the Lebesgue
measure. These two properties are the Euler-Lagrange conditions
that were found for densities in \cite{BT2}.
\end{remark}

\begin{remark}
Let us now clarify the differences between local minimizers in the
$d_2-$topology and local minimizers in the $d_\infty-$topology.
Following \cite{FellnerRaoul1}, let consider as an example the
interaction potential $W(x):=-x^2+\frac{x^4}2$ in one dimension.
Then,
$$
\rho_m=m\delta_0+(1-m)\delta_1
$$
is a critical point of the interaction energy for any $m\in
[0,1]$. Theorem {\rm 3.1} in \cite{FellnerRaoul1} shows that the
measure $\rho_m$ is a local minimizer in the $d_\infty-$topology
as soon as $m\in(1/3,2/3)$. Indeed, what is proven is the stronger
statement that $\rho_m$ is locally asymptotically stable for the
aggregation equation \eqref{pdes1} with respect to any
perturbation in the $d_\infty-$topology. However, $E(\rho_m)=\frac
12\left(m-\frac 12\right)^2-\frac 18$, so that $\rho_{1/2}$ only
can be a local minimizer of the energy in the $d_2-$topology (and
one can prove that it actually is). This shows that the set of
local minimizers with respect to the $d_2-$topology is strictly
contained in the set of local minimizers with respect to the
$d_\infty-$topology. Moreover, numerical simulations suggest that, for $m\in(1/3,2/3)$, $\rho_m$
is actually stable (although not asymptotically stable) with
respect to small $d_2-$perturbations. As a consequence, when using
a gradient flow approach to compute numerically minimizers of the
interaction energy via particles, one obtains $d_\infty-$local
minimizers which typically are not $d_2-$local minimizers (see
e.g. Fig {\rm 2} of \cite{FellnerRaoul1}).
\end{remark}


\section{Numerical Experiments}
In this section we conduct a numerical investigation of the local
minimizers of the discrete interaction energy \eqref{defEdiscrete}
with high number of particles. The gradient flow of
\eqref{defEdiscrete} is given by the system of ODEs:
\begin{equation}\label{eq:pde_particles}
\dot{X}_i=-\sum_{\substack{j=1\\j\neq i}}^{n}m_j\nabla W(X_i-X_j).
\end{equation}
In order to  efficiently find local minimizers of
\eqref{defEdiscrete}, we solve \eqref{eq:pde_particles} by an
explicit Euler scheme with an adaptive time step chosen as the
largest possible such that the discrete energy
\eqref{defEdiscrete} decreases. This scheme is nothing else than a
gradient descent method for the discrete energy
\eqref{defEdiscrete}. Although this method might not be accurate
enough for the dynamics, it is efficient to find local minimizers
of the discrete energy. In stiffer situations an explicit
Runge-Kutta method is used instead. These methods are essentially
the ones proposed in \cite{BUKB,soccerball}.
\begin{table}[ht]
\centering
\begin{tabular}{||m{1.7cm}|m{3cm}|m{3cm}|m{3cm}|m{3cm}||}
\hline
& $\qquad\mbox{Dim = 0}$ & $\qquad\mbox{Dim = 1}$ & $\qquad\mbox{Dim = 2}$ & $\qquad\mbox{Dim = 3}$\\
\hline
&(a)&&&\\
$\alpha=2.5$
&\begin{center}\includegraphics[height=0.1\textheight]{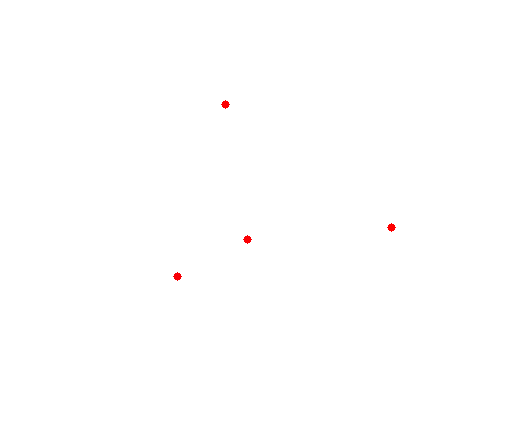}\end{center}
& & & \\
\hline
&&&(b)&(c)\\
$\alpha=1.25$ & &
\begin{center}{\Huge ?}\end{center}
&\begin{center}\includegraphics[height=0.1\textheight]{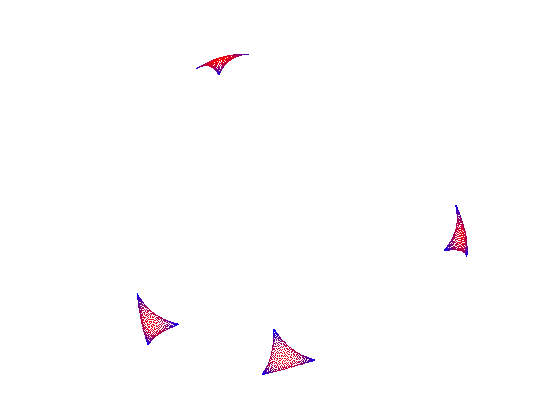}\end{center}
&\begin{center}\includegraphics[height=0.1\textheight]{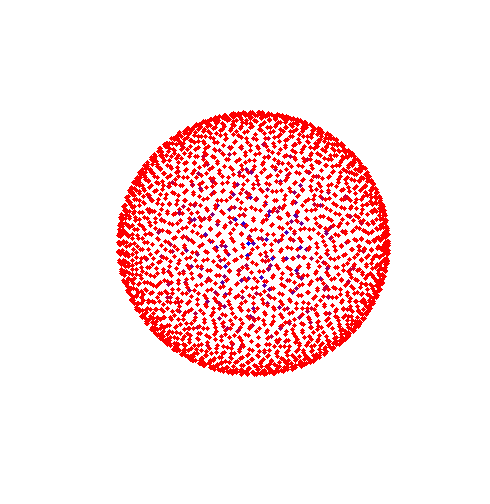}\end{center}
\\ \hline
&&&(d)&(e)\\
$\alpha=0.5$ & &
&\begin{center}\includegraphics[height=0.1\textheight]{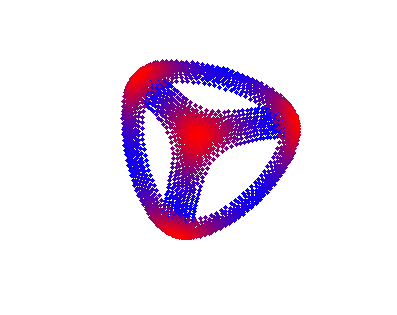}\end{center}
&\begin{center}\includegraphics[height=0.1\textheight]{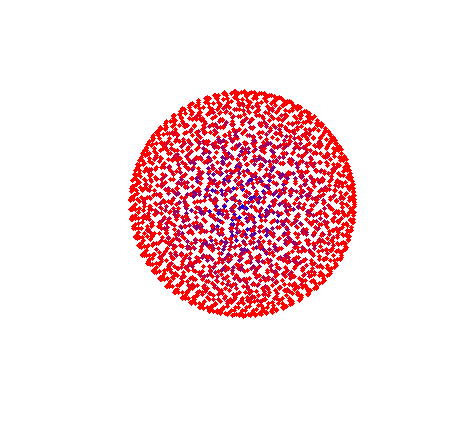}\end{center}
\\ \hline
&&&&(f)\\
$\alpha=-0.5$ & & &
&\begin{center}\includegraphics[height=0.1\textheight]{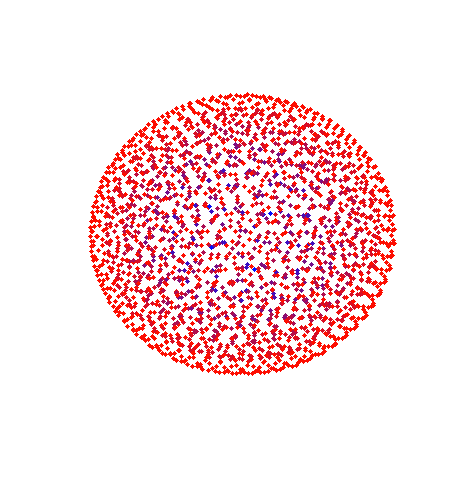}\end{center}
\\ \hline
 \end{tabular}
\vspace{3mm} \caption{Minimizers of $E^{n}_W$ in $\real^3$ for
various power-law potentials with $n=2,500$.} \label{tab:taula4}
\end{table}

The results of these simulations in two dimensions with power-law
potentials were presented in the introduction, see
Table~\ref{figure_intro}. In Subsection \ref{threeD}  we discuss
similar computations in three dimensions. We also provide
numerical experiments suggesting that for some potentials, there
are local minimizers of the interaction energy with mixed
dimensionality, that is, local minimizers that are the sum of measures
whose support have different Hausdorff dimension.

In Subsection \ref{perturbation} we show how our numerical
results can be further understood by using the results from
\cite{KSUB,BUKB,BCLR}, where a careful stability analysis  of ring
solution (in 2D) and spherical shell  solution (in 3D) was
conducted. We also show how this stability analysis connects to
the analytical results presented in this paper.

\subsection{Numerical experiments in 3D} \label{threeD}

We first compute numerically local minimizers of $E^n_W$ where $W$
is the power-law potential defined by \eqref{powerlaw}. Recall
that $\Delta W(x)\sim -1/{|x|^{\beta}}$ with $\beta=2-\alpha$ as
$x\to 0$. The computations are performed with $n=2,500$ particles.
The results are shown in Table~\ref{tab:taula4} and are discussed
below:

\begin{itemize}
\item Subfigure (a): $\alpha=2.5$ and $\gamma=5$. The support of
the minimizer has zero Hausdorff dimension in agreement with
Theorem \ref{thm1}. Actually, in this particular case it is
supported on 4 points forming a tetrahedron.

\item Subfigure (b) and (c): the two potentials have the same
behavior at the origin, $\alpha=1.25$, but different attractive
long range behavior ($\gamma=15$ and $\gamma=1.4$ respectively).
Theorem \ref{dimension} shows that the Hausdorff dimension of the
support must be greater or equal to $\beta=2-\alpha=0.75$.
Numerically, we observe that the local minimizer for the first
example has a two-dimensional support and the minimizer for the
second example has a three-dimensional support. We did not choose
the value $\alpha=1.5$ because we were not able to obtain
 a change of dimensionality of the stable steady states
varying $\gamma>\alpha$. Note that $\alpha=1.5$ is always above the
instability curve for radial perturbations which meets line
$\alpha=\gamma$ at the point $(\sqrt{2},\sqrt{2})$. See Figure
\ref{fig:diagramShellsrevisited} and Subsection 6.2 for more
details.

\item Subfigure (d) and (e):  the two potentials have the same
behavior at the origin, $\alpha=0.5$, but different attractive
long range behavior ($\gamma=23$ and $\gamma=1.4$ respectively).
Theorem \ref{dimension} shows that the Hausdorff dimension of the
support must be greater or equal to $\beta=2-\alpha=1.5$. Numerically,
we observe that the local minimizer for the first
example has a two-dimensional support and the local minimizer for
the second example has a three-dimensional support.

\begin{figure}[ht]
{\subfigure{\includegraphics[scale=0.3]{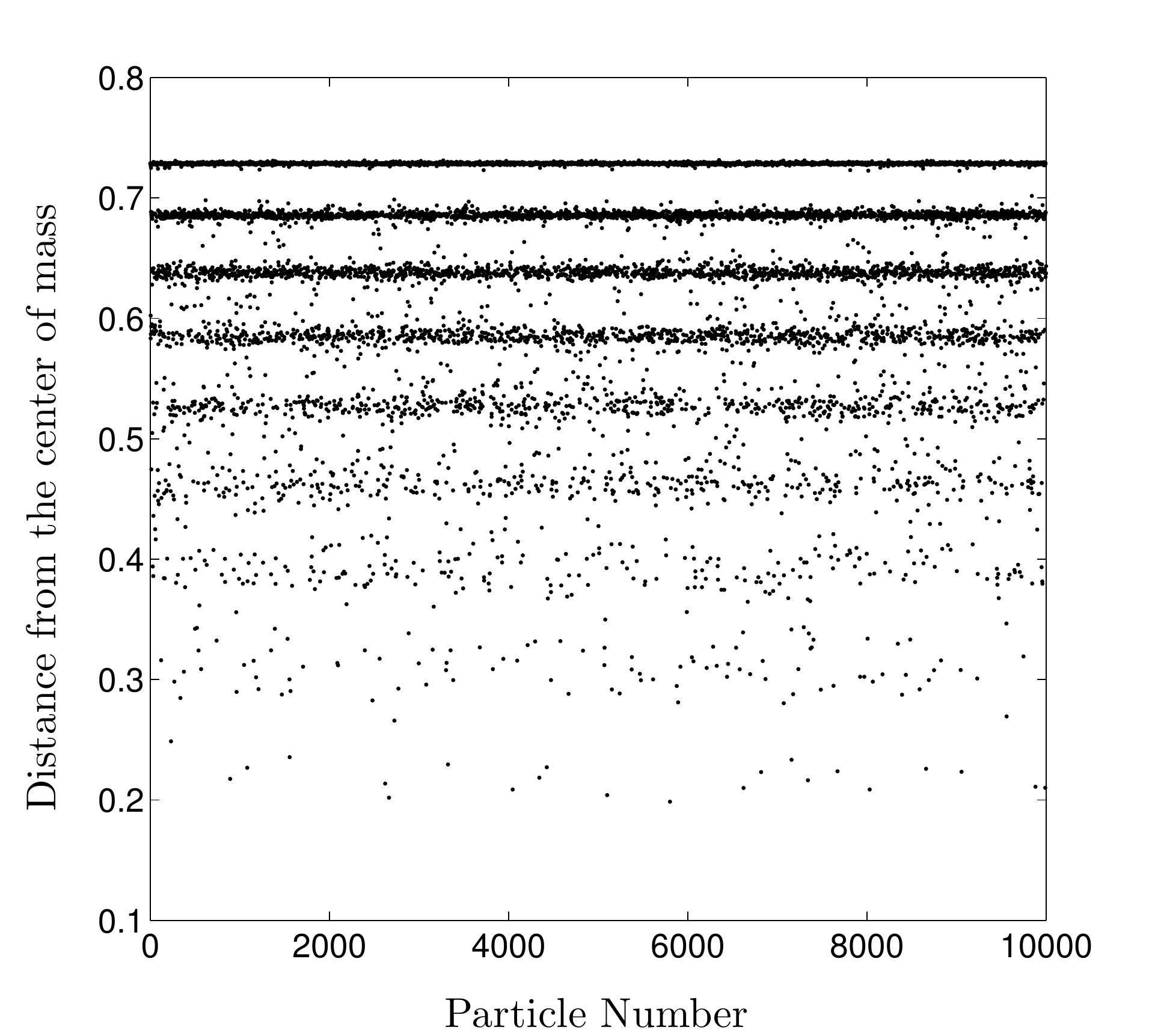}}}
\caption{Distances of the particles from the center of mass for
the power law potential with $\alpha=-0.5$, $\gamma=5$ in 3D. Case
(f) in Subsection 6.1 in Table~\ref{tab:taula4} with $n=10,000$.}
\label{fig:radiusofparticles}
\end{figure}

\item Subfigure (f): $\alpha=-0.5$ and $\gamma=5$. Theorem
\ref{dimension} proves that the Hausdorff dimension of the support
must be greater than $\beta=2-\alpha=2.5$, which can also be
observed numerically. In Figure \ref{fig:radiusofparticles}, we
have represented the radius of particles to the center of mass.
The particles seem to organize into successive two dimensional
layers. Such lattices were also observed in \cite{KHP}, and it is
related to the finite number of particles used in the simulations.
\end{itemize}

Notice that we were not able to find examples of interaction
potentials leading numerically to a local minimizer with one
dimensional support. We could however observe such situations with
an additional asymmetric confining potentials, we thus believe it
should be possible to produce such cases.

A natural question following Tables~\ref{figure_intro}
and~\ref{tab:taula4} is whether it is possible to produce local
minimizers that are a sum of two measures whose support have
different Hausdorff dimensions. A possible candidate was already
observed in \cite{BUKB}. Here, we analyze it more carefully with
much larger number of particles. From our simulations, it seems
that the interaction potential $W(x)=w(|x|)$ with $w$ defined by
\[
   -w'(r)=\tanh((1-r)a)+b,\quad  a=5, \quad  b=0.5,
\]
leads numerically to a local minimizer consisting in a ball
(Hausdorff dimension three) inside a spherical shell (Hausdorff
dimension two), see Figure \ref{fig:3d+2d}.

\begin{figure}[ht!]=
\hspace{-1cm}\includegraphics[height=0.35\textheight]{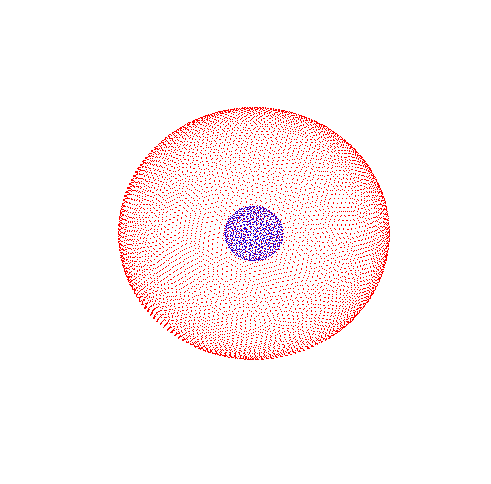}\hspace{-7mm}
\includegraphics[height=0.30\textheight]{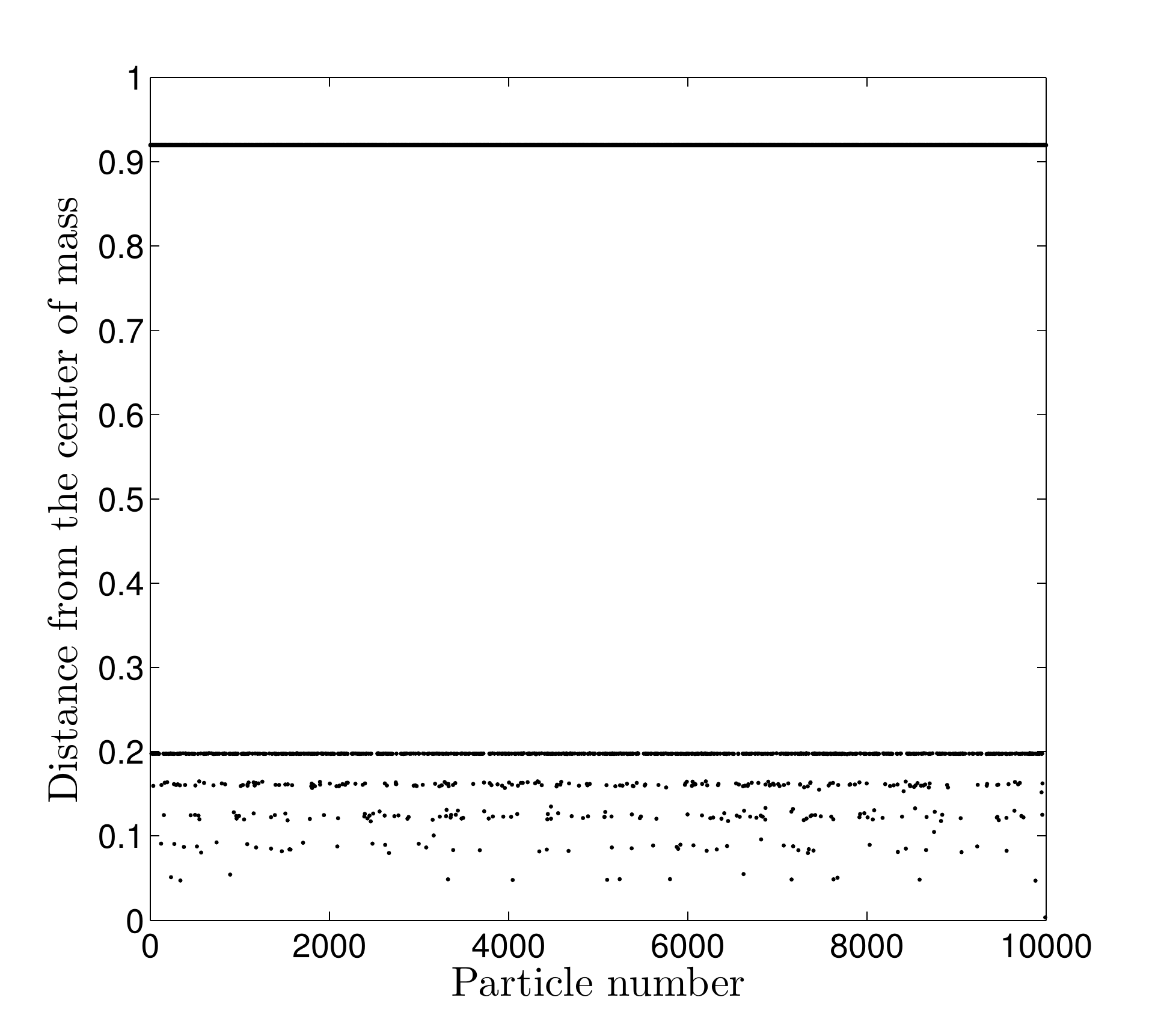}
\caption{Left: Local minimizer in 3D with $n=10,000$. Right:
Distance of the particles from the center of mass.}
\label{fig:3d+2d}
\end{figure}
The distance of each particle to the center of mass is displayed
on the right part of Figure \ref{fig:3d+2d}. The inner ball
appears to be composed of five equally spaced layers of particles.
This is most likely due to the fact that particles are organized
into a lattice configuration, and therefore the distances between
the particles and the origin do not form a continuum. It is
instructive to compare the distribution of the radius of the particles   in the
right subplot of Figure \ref{fig:3d+2d} with the one in Figure
\ref{fig:radiusofparticles} for the case of an approximated local
minimizer with three dimensional support, i.e., Case (f) of
Table~\ref{tab:taula4}. Although Theorem \ref{dimension}
guarantees that the support of the local minimizer corresponding
to Figure \ref{fig:radiusofparticles} has Hausdorff dimension
greater or equal to $2.5$, we can also observe that particles
arrange themselves in layers. Notice that in dimension N=2, such
artifacts also appear in simulations using a finite number of
particles, see Figure 4 in \cite{KHP}.

\subsection{Relationship with previous works on ring and shell solutions}
\label{perturbation}

An important characteristic of the analysis performed in the main
theorems of this work is that we do not assume a specific shape on
the local minimizers. If on the contrary, one is interested by the
special case of delta ring minimizers (in 2D), or spherical shell
minimizers (in 3D), perturbative methods provide more detailed
results.

In \cite{KSUB} the local stability of discrete ring solutions, made of
$N$-particle equally distributed in a circle, was studied for the
$N$-particle system \eqref{eq:pde_particles}. The authors
considered the power law interaction potentials \eqref{powerlaw},
and led a formal linear stability analysis for the continuum ring
solution as steady state of the aggregation equation \eqref{pdes1}
by taking $N\to\infty$. Those predictions were then confirmed
numerically. They could not obtain nonlinear stability of the ring
solution particularly because there is no spectral gap as
$N\to\infty$, i.e, the largest negative eigenvalue tends to 0 when
$N\to\infty$. In \cite{BCLR}, the nonlinear stability of the ring
solutions was proved for radial perturbations, corroborating some
of the formal results of \cite{KSUB}, together with the
instability due to fattening in the complementary set of
parameters.

\begin{figure}[ht]
\includegraphics[scale=0.35]{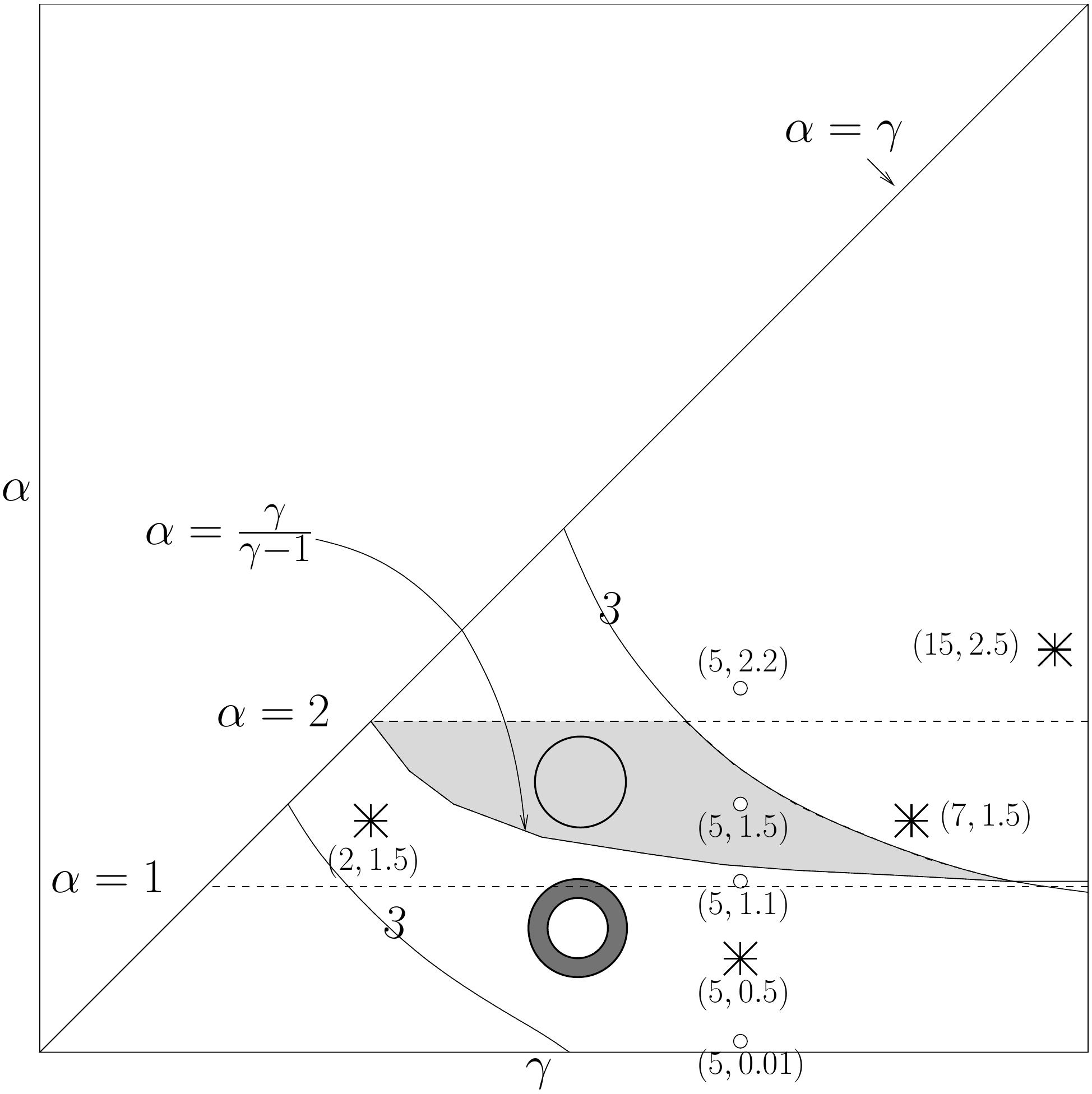}
\caption{Sketch of all the computed cases in dimension $N=2$. The
parameters $(\gamma,\alpha)$ used in Table~\ref{figure_intro} are
marked with $\ast$, while those used in Table~\ref{tab:taula2bis}
are marked with $\circ$. Notice that $\alpha<\gamma$ is necessary
for the interaction potential to be confining. In dark gray is
represented the set of parameters such that a delta ring could be
a local minimizer.} \label{fig:diagramDeltarevisited}
\end{figure}

We have represented this set of parameters in
Figure~\ref{fig:diagramDeltarevisited}, as well as all the
parameters used in the two dimensional numerical simulations of
this article (Tables~\ref{figure_intro} and \ref{tab:taula2bis}).
As the caricature presented in Table~\ref{tab:taula2bis} shows,
crossing the lower border of this set, curve
$\alpha=\gamma/(\gamma-1)$, leads to a ``fattening'' of the delta
ring, that is to minimizers with dimensionality $2$, see
\cite{KSUB,BCLR}. On the other hand, crossing its upper border,
given by the curve marked with 3, does not modify the
dimensionality of the stable steady states as long as $\alpha<2$
(they remain one dimensional), but leads to a ``shape''
instability towards a triangular configuration that breaks the
ring into 3 connected one dimensional components as in case (b) of
Table~\ref{figure_intro}.

Finally, if $\alpha>2$, local minimizers become of dimensionality
$0$, as predicted by Theorem~\ref{thm1}, whereas if $\alpha<1$,
all the minimizers are of dimensionality $2$, as shown by
Theorem~\ref{dimension}.
\begin{table}[ht]
\centering
\begin{tabular}{|c|c|c|c|}
\hline
 \includegraphics[height=0.09\textheight]{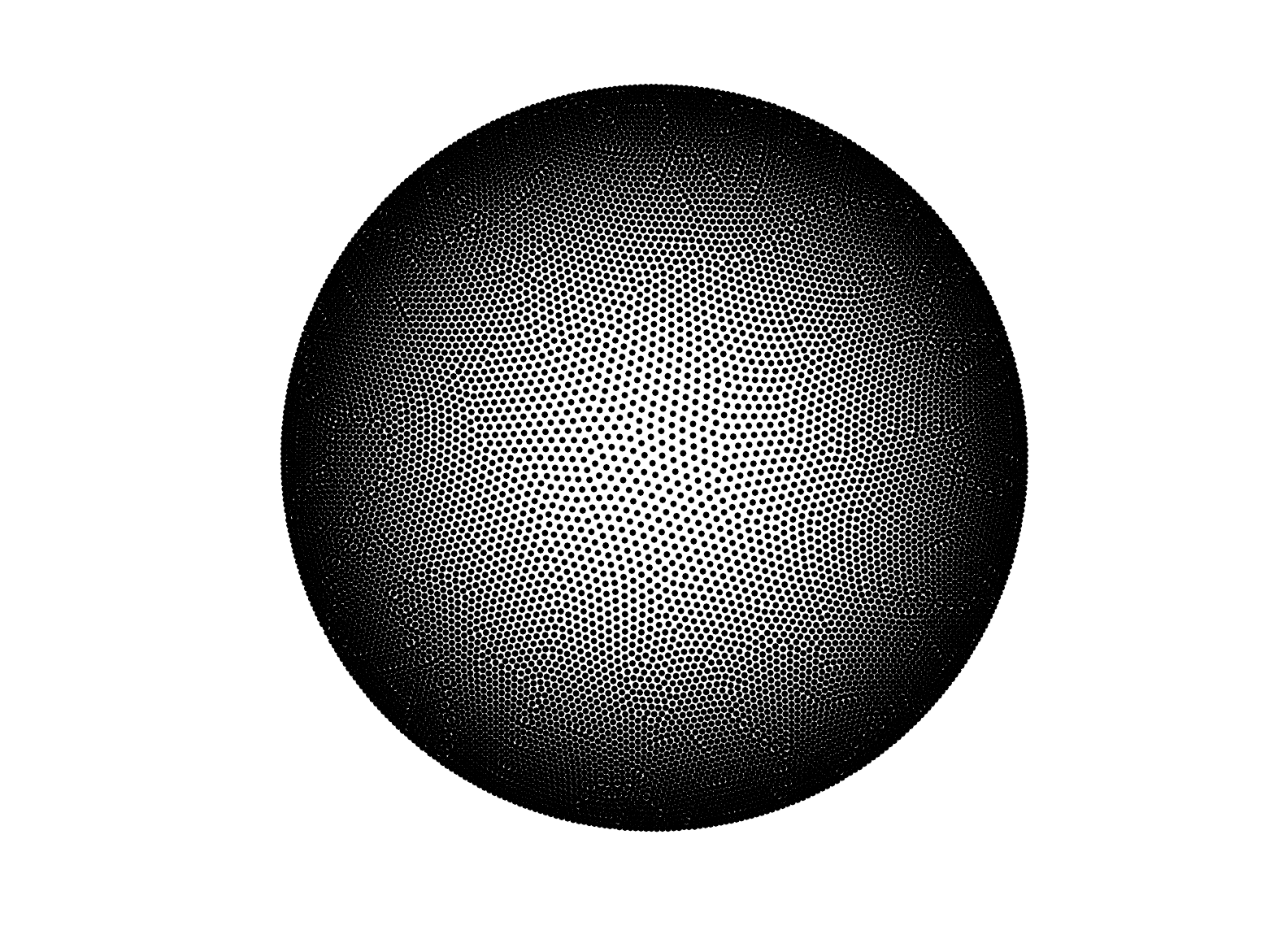}&
 \includegraphics[height=0.09\textheight]{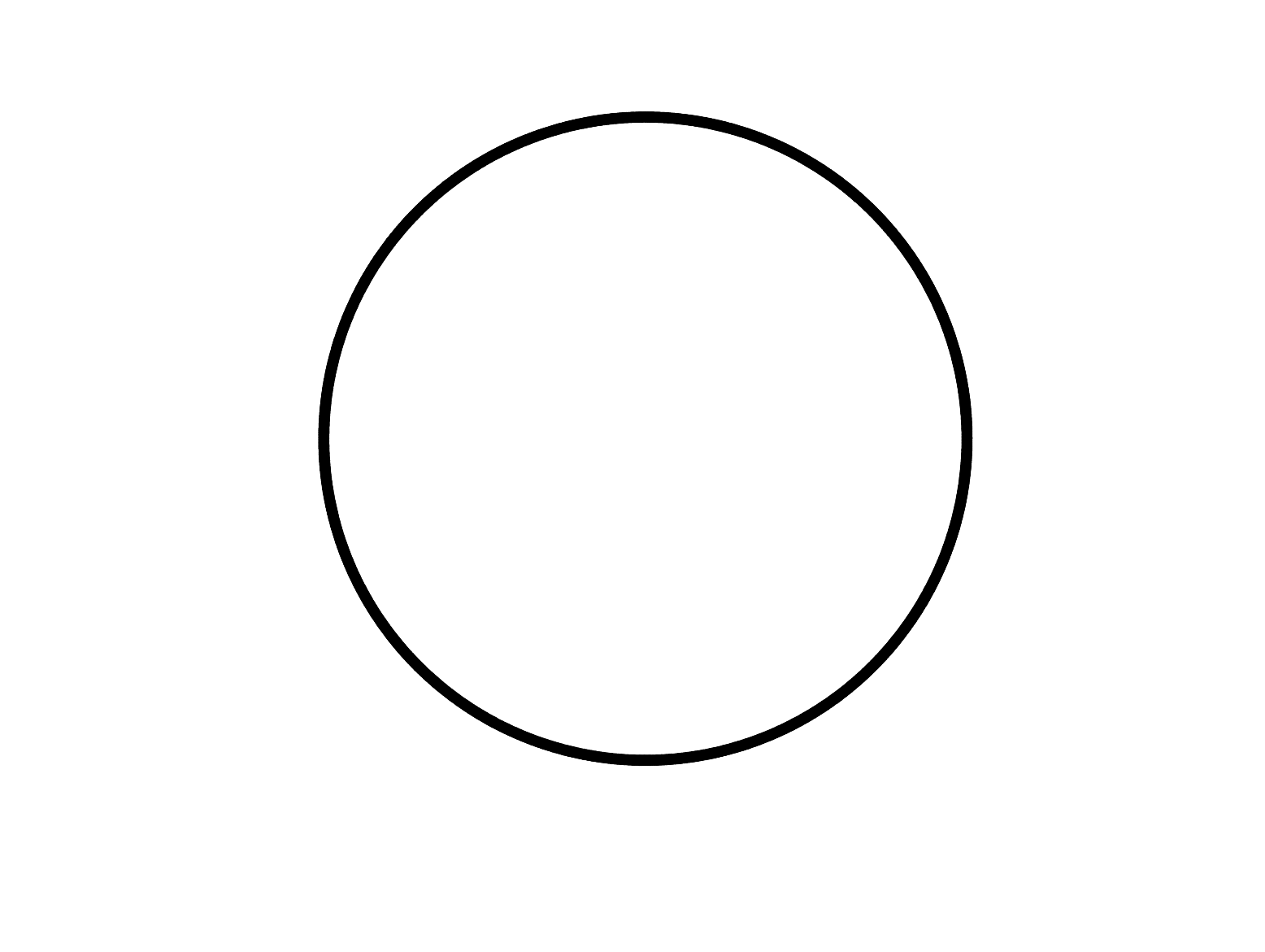}&
 \includegraphics[height=0.09\textheight]{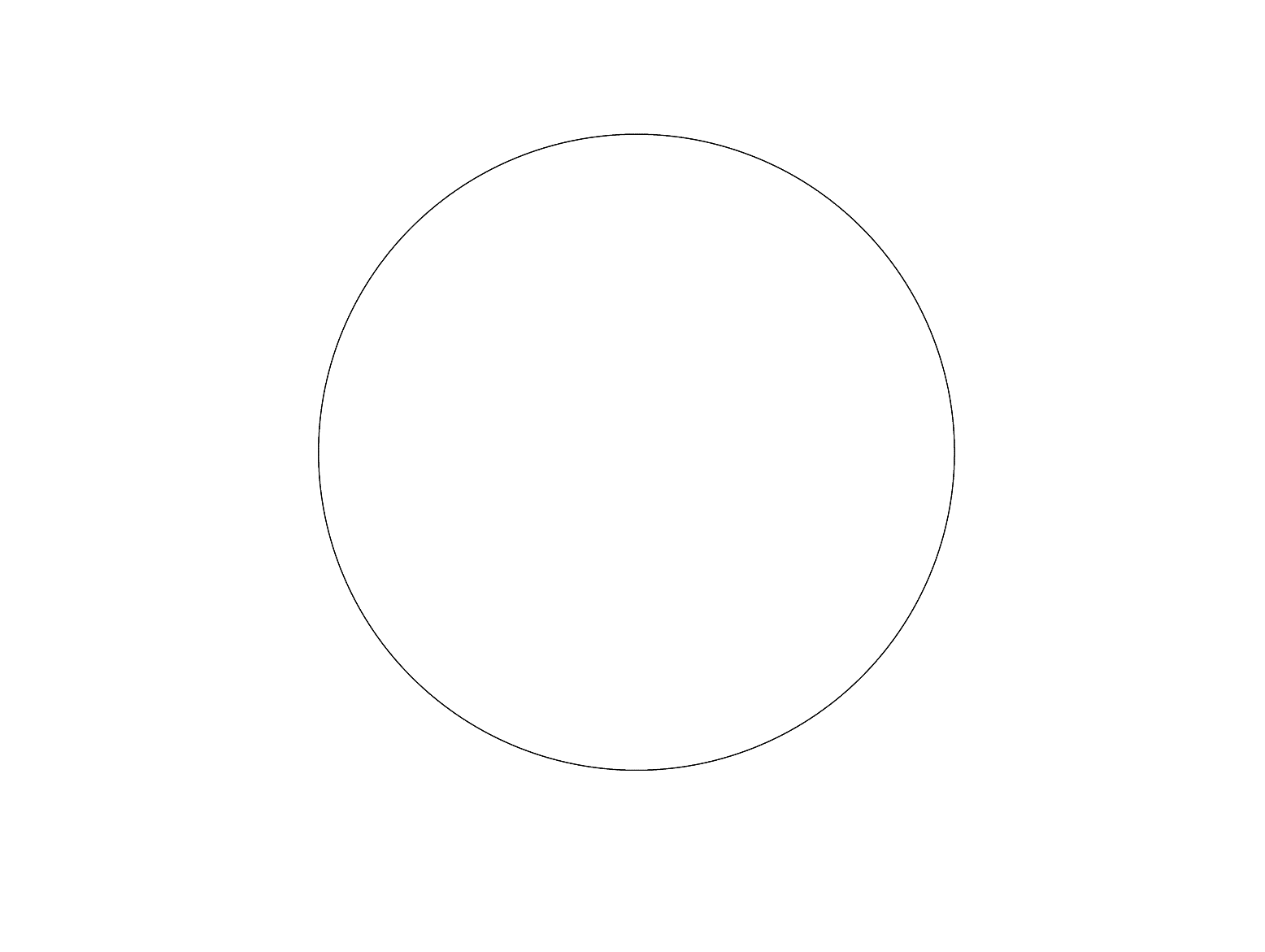}&
 \includegraphics[height=0.09\textheight]{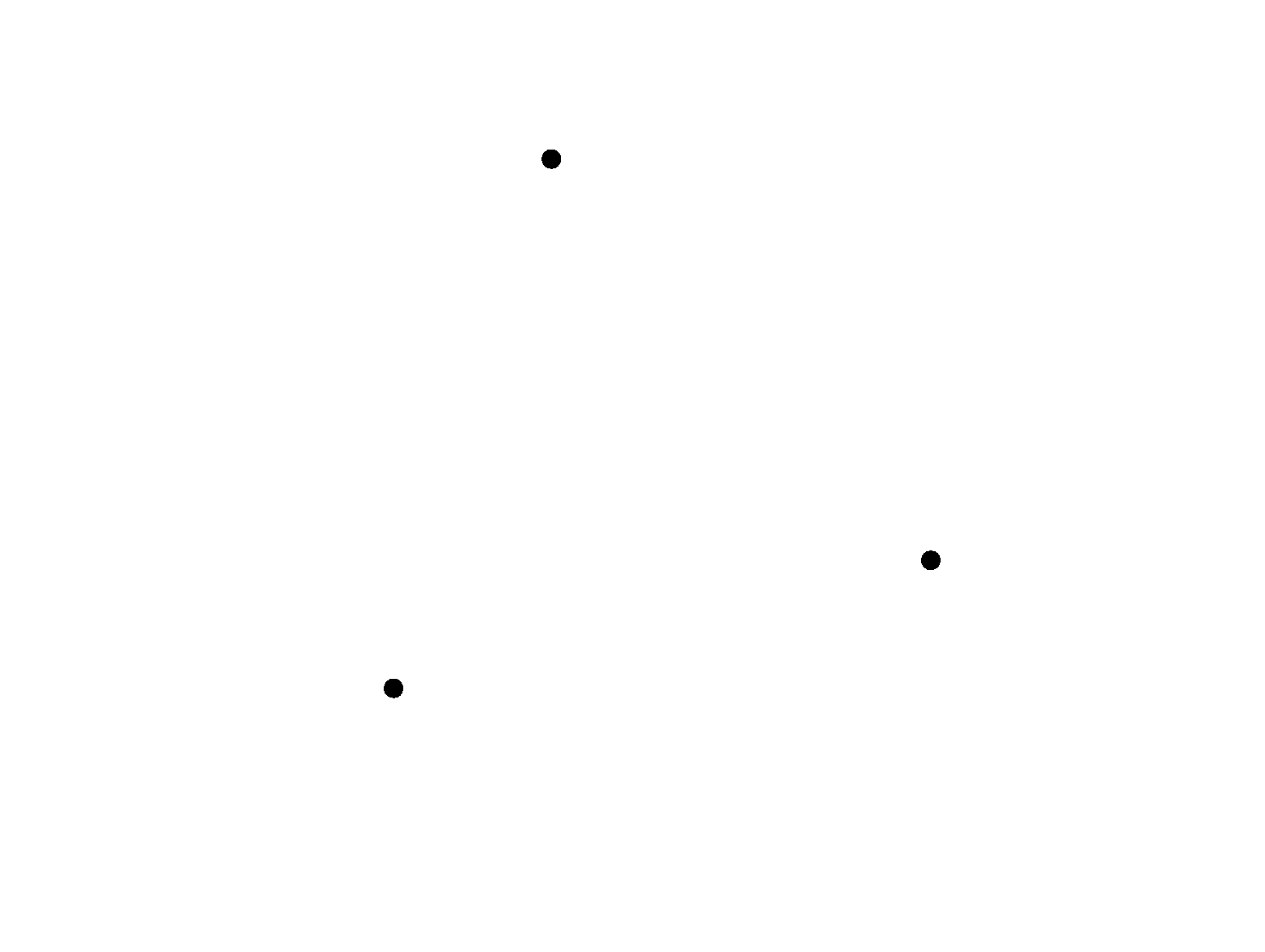}\\\hline
$\alpha=0.01$ & $\alpha=1.1$& $\alpha=1.5$& $\alpha=2.2$ \\\hline
\end{tabular}
\vspace{3mm} \caption{ Evolution of local minimizers when
$\alpha>0$ increases, while $\gamma=5$ remains constant.  The
computations were done with $n=10,000$ particles.}
\label{tab:taula2bis}
\end{table}

In three dimensions, a linear stability analysis of discrete
spherical shell solutions is also possible but it leads to more
cumbersome instability curves, see \cite{BUKB,soccerball}. Again,
the results in \cite{BCLR} give the ``fattening'' instability
curve dividing instability from stability under radial
perturbations. In Figure~\ref{fig:diagramShellsrevisited}, we have
only represented the set of parameters such that the spherical
shells are not local minimizers for spherically symmetric
perturbations, as well as all the parameters $(\gamma,\alpha)$
used for 3D numerical simulations in this article in
Table~\ref{tab:taula4}. Just as we have observed in the 2D case,
crossing the lower border of this set leads to a ``fattening"
instability of the spherical shell.

\begin{figure}[ht]
\includegraphics[scale=0.35]{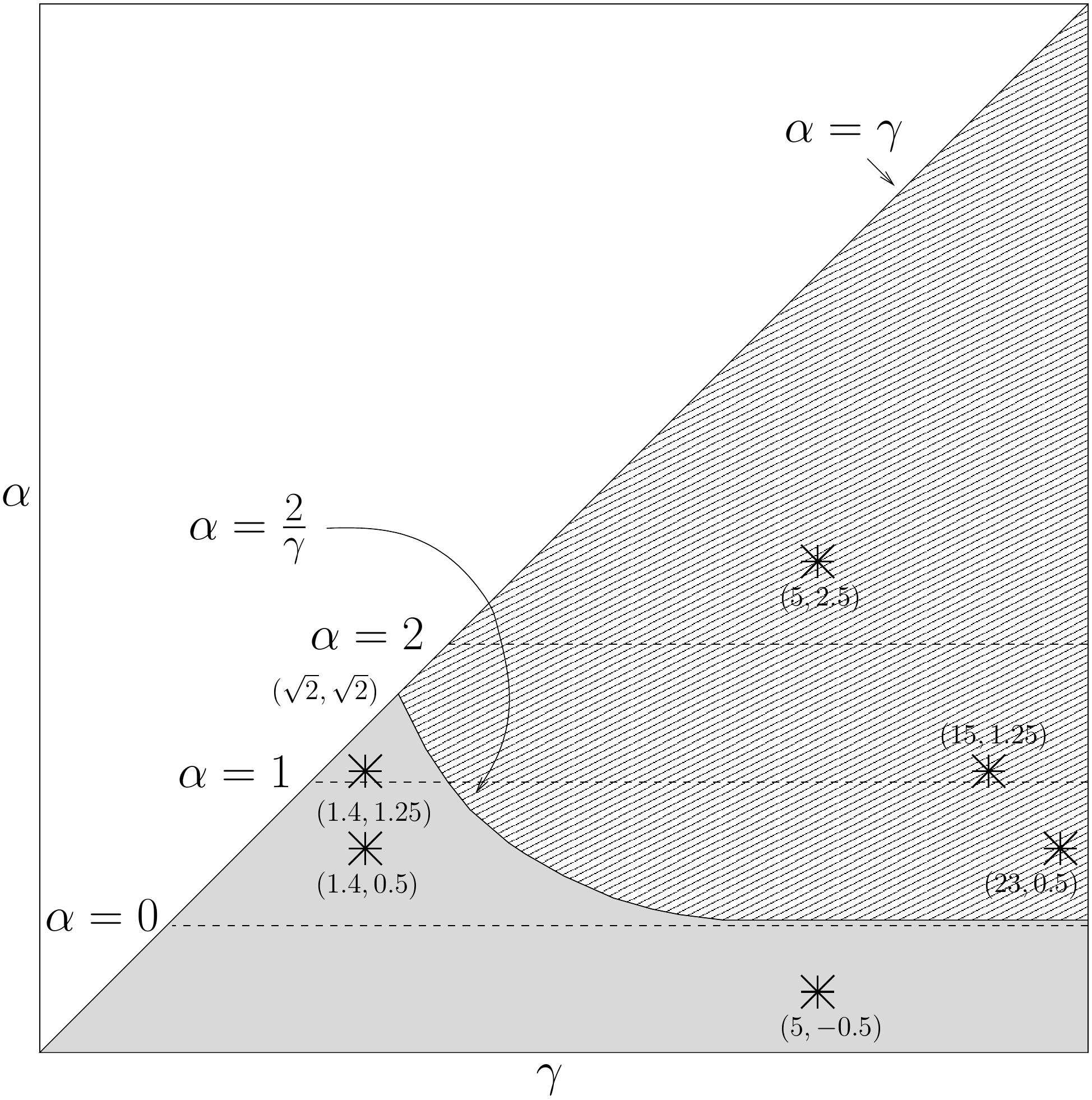}
\caption{ Sketch of all the computed cases in dimension $N=3$. The
parameters $(\gamma,\alpha)$ used in Table~\ref{tab:taula4} are
marked with $\ast$. Notice that $\alpha<\gamma$ is necessary for
the interaction potential to be confining. The curve is the limit
between parameters leading to spherical shell solutions for radial
perturbations (above the curve) and to minimizers of
dimensionality $3$ (below the curve).}
\label{fig:diagramShellsrevisited}
\end{figure}

Notice finally that it is also possible to modify the
dimensionality of the local minimizers with other perturbations of
power law potentials. As an example, in Table~\ref{tab:taula3}, we
consider the following perturbations of the power law potential
\eqref{powerlaw}:
\begin{equation}\label{powerperturb}
   W(x)=-\frac{|x|^\alpha}{\alpha}+\frac{|x|^\gamma}{\gamma}+\frac{3}{2p}\cos(px), \quad\alpha<\gamma,\quad p=3,5.
\end{equation}

In Table~\ref{tab:taula3}
we have represented the power-law case in the
first column, and the perturbations in the next two.  For
$(\gamma,\alpha)=(2,1.5)$, the unperturbed power-law potential
leads to a local minimizer with Hausdorff dimension two. When we
add the perturbation $p=3$,
the dimension of the
minimizer changes to one. Notice that the perturbation does not
alter the local behavior of the potential at the origin or at
infinity, suggesting that Theorem~\ref{dimension} is probably
sharp at least in terms of natural dimensions.

\begin{table}[ht]
\centering
\begin{tabular}{||m{3cm}|m{4cm}|m{4cm}|m{4cm}||}
\hline
& $\qquad\qquad\mbox{Powers}$ & $\qquad\qquad p=3$ & $\qquad\qquad p=5$\\
\hline $(\gamma,\alpha)=(2,1.5)$ &
\begin{center}\scalebox{0.25}{\includegraphics{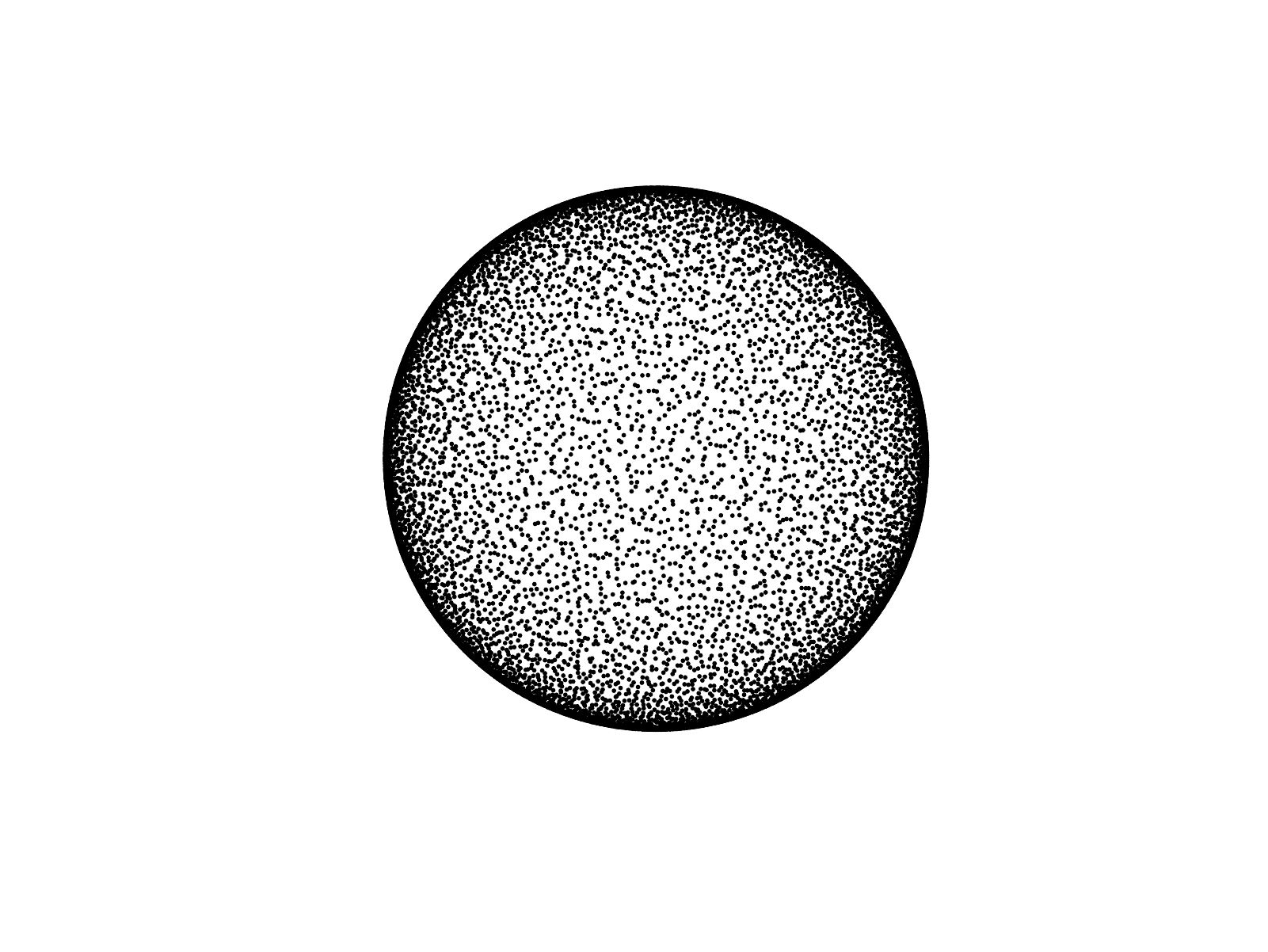}}\end{center}
&
\begin{center}\scalebox{0.25}{\includegraphics{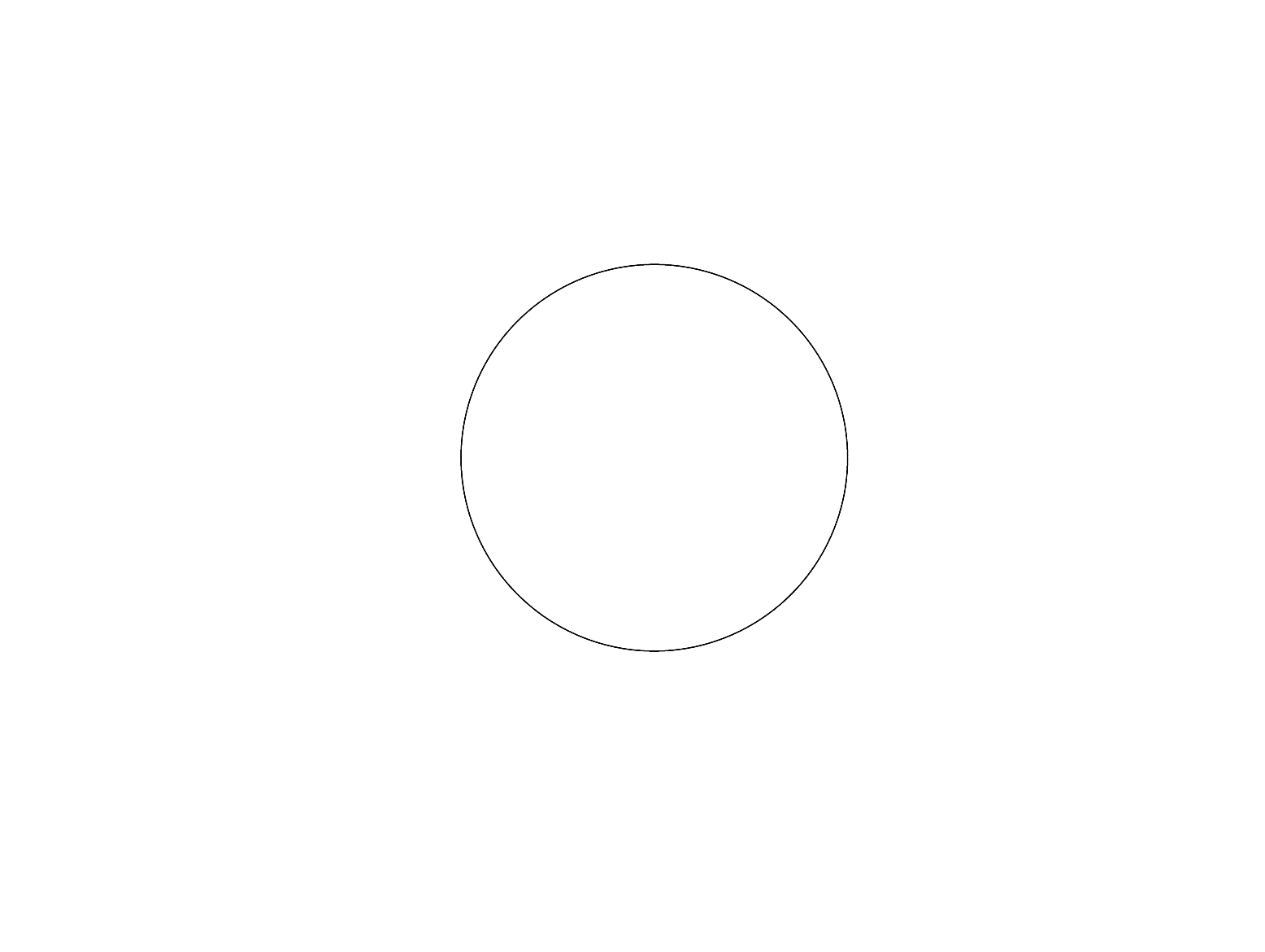}}\end{center}
& \begin{center}\scalebox{0.25}{\includegraphics{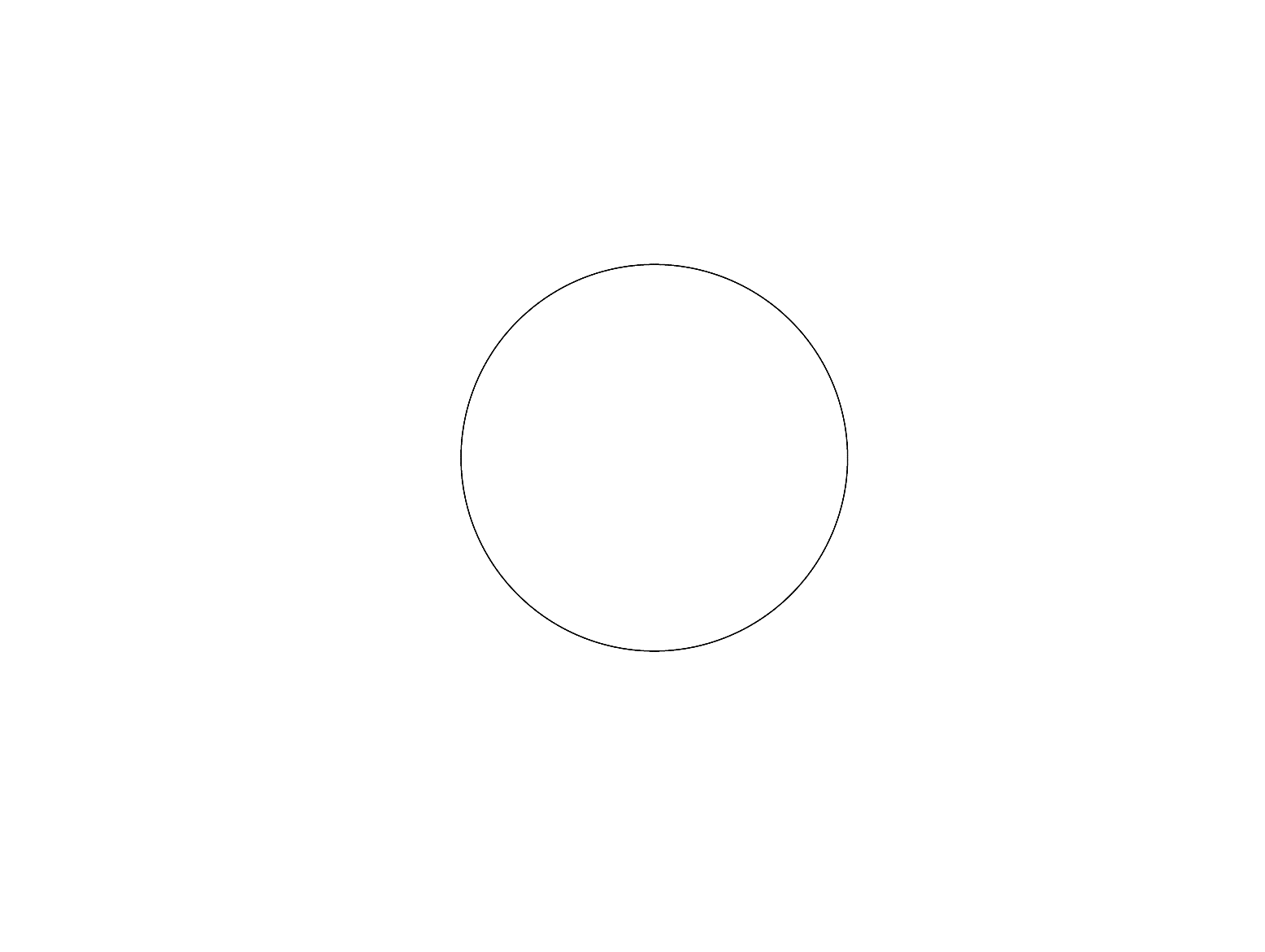}}\end{center} \\
\hline
\end{tabular}
\vspace{3mm}
\caption{Local minimizers with the power-law potential \eqref{powerlaw} and the perturbed potential \eqref{powerperturb}, $n=10,000$.}
\label{tab:taula3}
\end{table}


\subsection*{Acknowledgments}
We would like to thanks James von Brecht for multiple insightful
conversation concerning the numerical section of this paper. DB
and JAC were supported by the projects  Ministerio de Ciencia e
Innovaci\'on MTM2011-27739-C04-02 and 2009-SGR-345 from Ag\`encia
de Gesti\'o d'Ajuts Universitaris i de Recerca-Generalitat de
Catalunya. JAC acknowledges support from the Royal Society through
a Wolfson Research Merit Award. This work was supported by
Engineering and Physical Sciences Research Council grant number
EP/K008404/1. GR was supported by Award No. KUK-I1- 007-43 of
Peter A. Markowich, made by King Abdullah University of Science
and Technology (KAUST). DB, JAC and GR acknowledge partial support
from CBDif-Fr ANR-08-BLAN-0333-01 project. TL acknowledges the
support from NSF Grant DMS-1109805.

\bibliographystyle{plain}
\bibliography{refs}

\end{document}